\documentclass[10pt,twocolumn,letterpaper]{article}
\usepackage[
  top=0.75in,
  bottom=0.6in,
  left=0.7in,
  right=0.7in,
  columnsep=0.15in
]{geometry}
\usepackage{cite}
\usepackage{amsmath,amssymb,amsfonts,amsthm}
\usepackage{mathtools, mathrsfs, xcolor}
\usepackage{graphicx}
\usepackage{algorithm,algorithmic}
\usepackage{hyperref}
\usepackage{textcomp}
\def\BibTeX{{\rm B\kern-.05em{\sc i\kern-.025em b}\kern-.08em
    T\kern-.1667em\lower.7ex\hbox{E}\kern-.125emX}}
\newtheorem{theorem}{Theorem}
\newtheorem{proposition}[theorem]{Proposition}
\newtheorem{lemma}[theorem]{Lemma}

\newtheorem{remark}{Remark}
\newtheorem{claim}{Claim}
\newtheorem{definition}{Definition}
\begin{document}
\title{Intrinsic Contraction Theory and Stabilization of Probability Measures on the Wasserstein Space}
\author{Min Kim$^*$, Joshua D. Ibrahim$^*$, and Soon-Jo Chung}

\date{}
\maketitle
{
\renewcommand{\thefootnote}{}
\footnotetext[0]{$^*$These authors contributed equally.\par
This work has been submitted to the IEEE for possible publication. Copyright may be transferred without notice, after which this version may no longer be accessible.\par
This work was supported by DARPA Learning Introspective Control (LINC) and Technology Innovation Institute (TII).\par
All authors are with the California Institute of Technology, Pasadena, CA 91125 USA (e-mail: \{mink, jdibrahi, sjchung\}@caltech.edu).}
}
\pagestyle{plain}
\thispagestyle{plain} 
\begin{abstract}
Deterministic flows can contract Wasserstein distances between probability laws without being contractive in point-state space. We develop a differential contraction theory for pushforward flows on forward-invariant subsets of the 2-Wasserstein space that quotients out the gauge freedom of the continuity equation. Weighted Helmholtz--Hodge decomposition and the Wasserstein contingent cone define the weighted horizontal tangent cone (whtc). Requiring the contraction quadratic form to be nonpositive only on the whtc yields the Intrinsic Wasserstein Contraction Condition (IWCC), which generalizes the classical contraction LMI and recovers it on Dirac measures. An almost-everywhere Euler approximation based on the superposition principle and Banach-valued Lebesgue differentiation theorem proves that IWCC is sufficient for intrinsic Wasserstein contraction. Law contraction without point-state contraction is illustrated by a rotationally symmetric shear example. Finally, a new comparison lemma yields an incremental Wasserstein ISS theorem; for Brockett's integrator, we prove exponential stabilization of a class of axisymmetric laws to the uniform law on a circle via a smooth feedback.
\end{abstract}

\noindent\textbf{Keywords:} Contraction theory, incremental stability, input-to-state stability, nonlinear systems, optimal transport, probability measures, Wasserstein distance.

\section{Introduction}
Contraction theory~\cite{LoSl1998, TsCh2021, Bu2026} of Lohmiller and Slotine, which is an infinitesimal analogue of Lyapunov stability theory, studies a given dynamical system $\dot{\mathbf{x}} = f(t,\mathbf{x})$ on a Riemannian manifold $(\mathscr{M},g)$ by finding an auxiliary metric $\tilde g$ such that the infinitesimal energy $\frac{1}{2}\tilde g(d\mathbf{x},d\mathbf{x})$ decays exponentially along the flow of the dynamical system. It has become a powerful tool in control theory and dynamical systems theory for establishing nonlinear stability. The central idea is to compare two solution trajectories by choosing an arbitrary smooth path connecting their initial conditions and then transporting this path through the dynamics. Exponential decay of the infinitesimal energy implies exponential decay of the $\tilde g$-length of the transported path, and hence of the $\tilde g$-distance between the two solutions. Assuming the auxiliary metric $\tilde g$ is uniformly bounded below by $g$, this $\tilde g$-distance estimate yields a corresponding bound on the original Riemannian distance between the solutions.

\begin{figure*}[!t]
    \centering
    \includegraphics[width=\linewidth]{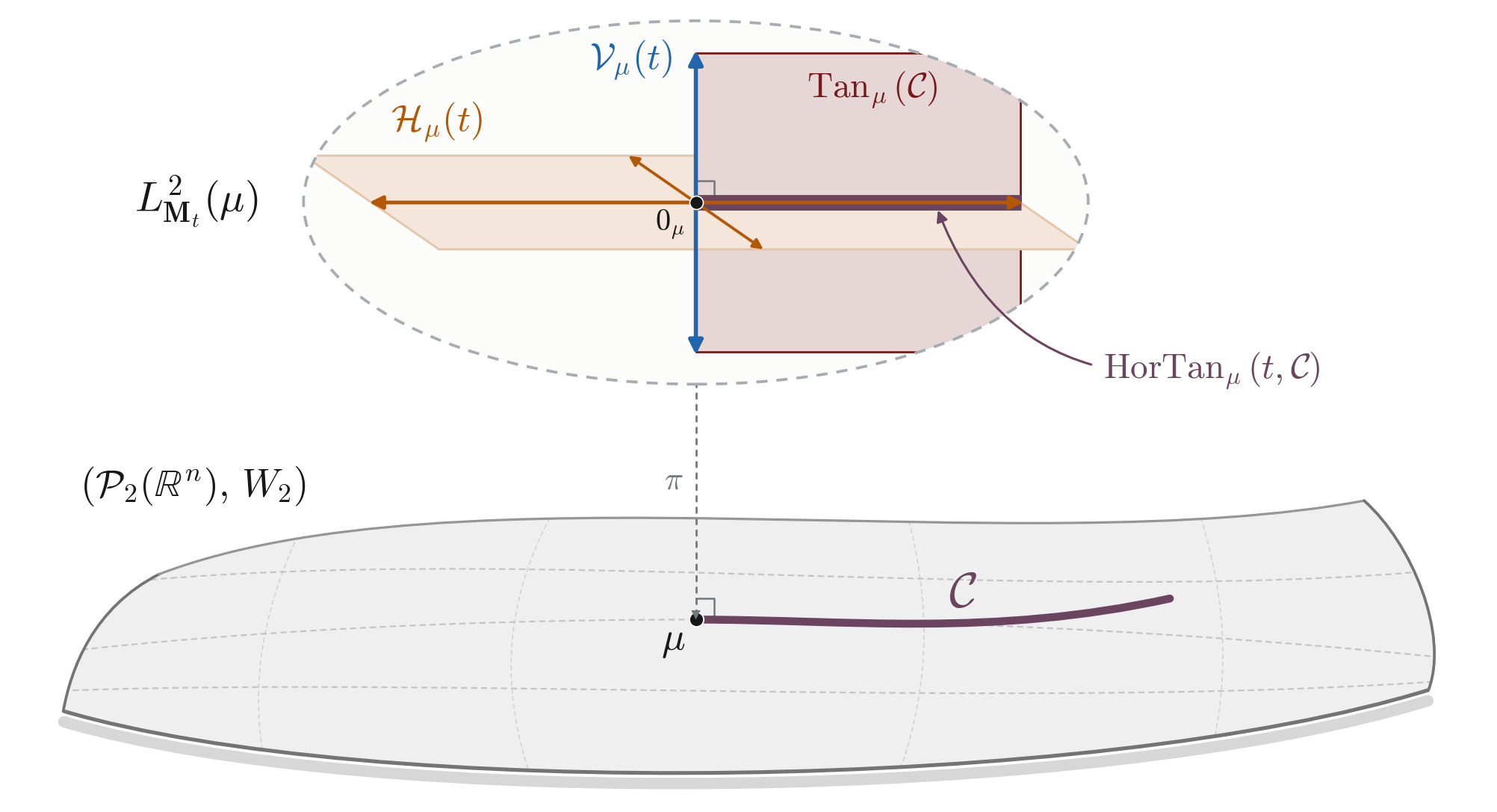}
    \caption{Schematic geometry of the weighted horizontal tangent cone (whtc) at $\mu \in \mathcal{C}$ and at time $t \ge 0$. The lower surface represents the Wasserstein space $(\mathcal{P}_2(\mathbb{R}^n), W_2)$, with $\mathcal{C}$ drawn as a one-dimensional curve with endpoint $\mu$. The upper oval represents the weighted velocity space $L^2_{\mathbf{M}_t} (\mu)$, viewed formally as a fiber over $\mu$, and displays its weighted Helmholtz--Hodge decomposition $L^2_{\mathbf{M}_t}(\mu)=\mathcal{H}_\mu (t) \oplus \mathcal{V}_\mu (t)$.
    The Wasserstein contingent cone $\operatorname{Tan}_\mu (\mathcal{C})$ consists of directions tangent to $\mathcal{C}$, and its horizontal part is the whtc, which is defined by $\operatorname{HorTan}_\mu(t,\mathcal{C})\coloneqq  \mathcal{H}_\mu(t) \cap \operatorname{Tan}_\mu(\mathcal{C})$. }
    \label{fig:whtc}
\end{figure*}
This paper generalizes contraction theory to flows on the 2-Wasserstein space $\mathcal{P}_2(\mathbb{R}^n)$. Our contributions are threefold. First, we develop a natural extension of contraction theory to pushforward-induced flows on the 2-Wasserstein space $\mathcal{P}_2(\mathbb{R}^n)$ by directly using the Benamou--Brenier formula~\cite{BeBr2000} and the formal Riemannian viewpoint on $\mathcal{P}_2(\mathbb{R}^n)$ provided by Otto calculus~\cite{Ot2001}. This results in a novel contraction condition, called the \emph{Intrinsic Wasserstein Contraction Condition} (IWCC), which generalizes the classical contraction condition of Lohmiller and Slotine; the latter is recovered when $\mathcal C=\{\delta_{\mathbf{x}}:\mathbf{x}\in\mathbb R^n\}$, the subset of Dirac measures, which is identified with the original state space $\mathbb{R}^n$. A key analytical step in the proof of the IWCC combines the superposition principle for the continuity equation with Banach-valued Lebesgue differentiation theorem, yielding an Euler-type first-order approximation of curves in $\mathcal{C}$ and showing that their velocity fields belong to the relevant Wasserstein contingent cones almost everywhere. Second, we further extend the theory to controlled systems $\dot{\mathbf{x}}=f(t,\mathbf{x},\mathbf{u})$. Given a control homotopy $\bar{\mathbf u}$ between two given control fields, we introduce the control IWCC (cIWCC) for $(\mathbf M,\bar{\mathbf u})$. Under the cIWCC, we prove an incremental Wasserstein ISS estimate. The proof of the ISS estimate relies on a square-root Dini comparison lemma involving a function that is lower semicontinuous from the left and a Lebesgue-measurable forcing term. 
Finally, we apply the ISS theorem to Brockett's integrator and construct a smooth feedback that exponentially stabilizes an axisymmetric invariant probability law supported on a circle.

We develop a contraction criterion for the \emph{intrinsic} Wasserstein distance on a \emph{forward-invariant subset} $\mathcal{C}$ of $\mathcal{P}_2(\mathbb{R}^n)$. A careful analysis reveals the \emph{weighted horizontal tangent cone} (whtc) as a natural test space for the IWCC. The elements of the whtc are obtained in two steps. First, the weighted Helmholtz--Hodge decomposition quotients out the infinitesimal stabilizer directions of a probability measure $\mu \in \mathcal{C}$ by considering minimizers of a kinetic energy functional whose mass is weighted by $\mathbf{M}(t,\mathbf{x})$. Moreover, because we are considering the intrinsic Wasserstein distance on $\mathcal{C}$, the relevant velocity fields may be further assumed to lie in the Wasserstein contingent cone of $\mathcal C$. These two conditions jointly define the whtc at $\mu$. Figure~\ref{fig:whtc} illustrates this geometry schematically. The weighted Helmholtz--Hodge decomposition decomposes the fiber over $\mu$ as $L^2_{\mathbf{M}_t}(\mu)=\mathcal{H}_\mu (t) \oplus \mathcal{V}_\mu (t)$, while restricting the analysis to $\mathcal{C}$ naturally leads to the Wasserstein contingent cone $\operatorname{Tan}_\mu (\mathcal{C})$. Accordingly, the relevant test directions for intrinsic Wasserstein contraction form the whtc, defined by $\operatorname{HorTan}_\mu(t,\mathcal{C})\coloneqq  \mathcal{H}_\mu(t) \cap \operatorname{Tan}_\mu(\mathcal{C})$. We further justify this choice by showing, by means of an example, that neither $\mathcal{H}_\mu(t)$ nor $\operatorname{Tan}_\mu (\mathcal{C})$ is adequate for the desired contraction, thereby leading to the IWCC theorem with the whtc as its test space.

\paragraph{Related Works}
Contraction theory has been extended beyond requiring full contraction of the original state dynamics. Partial contraction uses a contracting auxiliary system to establish convergence toward a prescribed property~\cite{WaSl2005}, and horizontal contraction requires contraction only in selected horizontal directions~\cite[Theorem~4]{FoSe2014}. While \cite{WaSl2005} proves that point-state trajectories converge to a prescribed property, our theory proves an exponential upper bound on the 2-Wasserstein distance between two measure-valued trajectories and does not introduce a contracting auxiliary system or prescribe a target property. Our construction differs from \cite{FoSe2014} in the origin and meaning of the horizontal space. In \cite{FoSe2014}, a horizontal--vertical decomposition is specified on the tangent bundle of a smooth state manifold, and the selected vertical state-space directions are ignored by the horizontal Finsler--Lyapunov function. Moreover, \cite{FoSe2014} does not treat probability distributions as states. In our setting, the vertical space at each probability distribution $\mu$ is instead induced by the continuity equation, and the auxiliary metric selects the minimum kinetic energy representative of each infinitesimal law movement. Moreover, since $\mathcal C$ need not be a smooth subset, the infinitesimal law movements are described by the Wasserstein contingent cone instead of a tangent space, which is further restricted by our horizontal space.

On the Wasserstein side, Bouvrie and Slotine~\cite{BoSl2019} studied Wasserstein contraction for stochastic systems, showing that contraction of the underlying noise-free dynamics can yield bounds on Wasserstein distances between the laws of solutions to SDEs. Here, the Wasserstein contraction was derived as a consequence of the classical contraction condition checked on virtual displacements in the whole Euclidean tangent space, whereas our work directly tests the contraction quadratic form on the virtual displacements in the Wasserstein space, and further generalizes the condition by defining the whtc. We also note that, in contrast to gradient-flow-based Wasserstein contraction results~\cite[Chapter~11]{AmGi2008},~\cite{CaMc2006}, our theory does not require the dynamics to arise from an underlying energy functional.

On the application side, contraction-regularized learning was used to learn fault-indexed Perron--Frobenius operators, which were then used for fault inference and recovery~\cite{Ib2026p}; Wasserstein contraction was also used to provide a necessary condition for fault detectability~\cite{Ib2026d}. We also note that mean-square stochastic contraction was applied to observer design~\cite{DaCh2015}, resulting in a stochastic stability theorem for the observer.

\paragraph{Basic Definitions}
We now begin by recalling the basic definitions~\cite{FiGl2023, Vi2009}. Let $(X,d)$ be a nonempty separable metric space, let $\mathcal{B}(X)$ denote its Borel $\sigma$-algebra, and let $\mathcal{P}(X)$ be the set of probability measures on $(X,\mathcal{B}(X))$. The space $\mathcal{P}_2(X)$ consists of Borel probability measures with finite second moment, i.e., for any fixed $\mathbf{x}_0 \in X$,
\begin{equation}
    \mathcal{P}_2(X) \coloneqq \left\{ \mu \in \mathcal{P}(X): \int_X d(\mathbf{x},\mathbf{x}_0)^2 d\mu(\mathbf{x}) < \infty \right\}.
\end{equation}
The above condition holds for every $\mathbf{x}_0 \in X$ whenever it holds for one such point. For $\mu,\nu \in \mathcal{P}(X)$, let $\Pi(\mu,\nu)$ denote the set of Borel probability measures on $X \times X$ whose first and second marginals are $\mu$ and $\nu$, respectively. The 2-Wasserstein distance between $\mu$ and $\nu$ is defined by
\begin{equation}
    W_2(\mu,\nu) \coloneqq \left( \inf_{\pi \in \Pi(\mu,\nu)} \int_{X \times X} d(\mathbf{x},\mathbf{y})^2 d\pi(\mathbf{x},\mathbf{y}) \right)^{1/2}.
\end{equation}
This defines a metric on $\mathcal{P}_2(X)$~\cite{ClDe2008}. Moreover, if $(X,d)$ is complete, then $(\mathcal{P}_2(X), W_2)$ is complete as well. In the remainder of the paper, we take $X=\mathbb{R}^n$. We therefore often suppress the dependence on $X$ and simply write $\mathcal{B}$, $\mathcal{P}$, $\mathcal{P}_2$, and so on. Recall that a curve $(\mu_s)_{s \in [0,1]}$ in $\mathcal{P}_2$ is called a (constant speed) geodesic if $W_2(\mu_s,\mu_t)=|t-s|W_2(\mu_0,\mu_1)$ for all $s, t$. A subset $\mathcal{C}\subseteq\mathcal{P}_2$ is called weakly displacement convex if for any $\mu,\nu\in\mathcal{C}$ there exists a geodesic from $\mu$ to $\nu$ which stays in $\mathcal{C}$.

\subsection{Motivation: An Example}
\label{subsec:motivating_example}
We provide a simple example to motivate the constructions in this paper. The example is a rotationally symmetric shear on $\mathbb{R}^2$, where the radial variable contracts, while the angular velocity depends on the radius. This example will be conceptually generalized further in Claim~\ref{claim:example_compactgroup}.

Fix $\alpha>0$ and $\beta\neq 0$. Let $\chi \ge 0$ be a smooth function on $\mathbb R$ such that $\chi(r)=0$ for $r\le \frac 1 8$, $\chi(r)=1$ for $r\ge \frac 1 4$. We consider the following dynamics in polar coordinates:
\begin{equation}
\label{eq:motivating_polar_system}
\begin{aligned}
    \dot r&=-\alpha\chi(r)(r-1),\\
    \dot\theta&=1+\beta\chi(r)(r-1).
\end{aligned}
\end{equation}
Since $\chi$ vanishes near the origin, the associated Cartesian vector field is $f(x_1, x_2)=(-x_2, x_1)$ near $x=0$, and is therefore smooth on $\mathbb{R}^2$.

Let $A = \left\{\mathbf{x} \in \mathbb{R}^2: \frac 1 2 < |\mathbf{x}| < 2 \right \}$, and let $\mathcal C_A$ be the set of rotationally-invariant probability measures concentrated on $A$. The set $A$ is forward invariant, and so is $\mathcal C_A$. On $A$, the radial dynamics reduce to $\dot r=-\alpha(r-1)$, $\alpha>0$. Consequently, we would expect exponential incremental stability for measures $\mu,\nu \in \mathcal C_A$. However, this does not entail pointwise contraction of the planar system. Indeed, if $\mathbf{x}_1(0)=e^{i\theta_1}$ and $\mathbf{x}_2(0)=e^{i\theta_2}$ lie on the unit circle, then $r_1(t)=r_2(t)=1$ and $|\mathbf{x}_1(t)-\mathbf{x}_2(t)|=|\mathbf{x}_1(0)-\mathbf{x}_2(0)|$ for all $t\ge 0$. Moreover, even if two points in $A$ start on the same radial ray but have different radii, we have $\theta_1(t)-\theta_2(t) = \frac{\beta}{\alpha}(r_1(0)-r_2(0))(1-e^{-\alpha t}) \to \frac \beta \alpha (r_1(0)-r_2(0))$ as $t \to \infty$, and this limit need not be an integer multiple of $2\pi$.
Note that the angular component of the vector field is invisible to the laws $\mu \in \mathcal{C}_A$. This is the phenomenon encoded by the horizontal--vertical decomposition below in Lemma~\ref{lemma:vertical_horizontal_decomp}. Figure~\ref{fig:rot_sym_shear} illustrates this contrast between the pointwise dynamics in \eqref{eq:motivating_polar_system} and the induced dynamics on $\mathcal{C}_A$.

The angular shear is irrelevant in the Wasserstein picture even though it cannot be ignored in pointwise contraction analysis. This motivates a contraction theory for the Wasserstein space where the contraction property is tested only along the ``relevant'' directions. Later, we will define the \emph{weighted horizontal tangent cone} $\operatorname{HorTan}_\mu(t,\mathcal{C})$ in Definition~\ref{def:weighted_horizontal_tangent_cone} to formalize this concept of relevant directions.

\begin{figure}[!t]
    \centering
    \includegraphics[width=\linewidth]{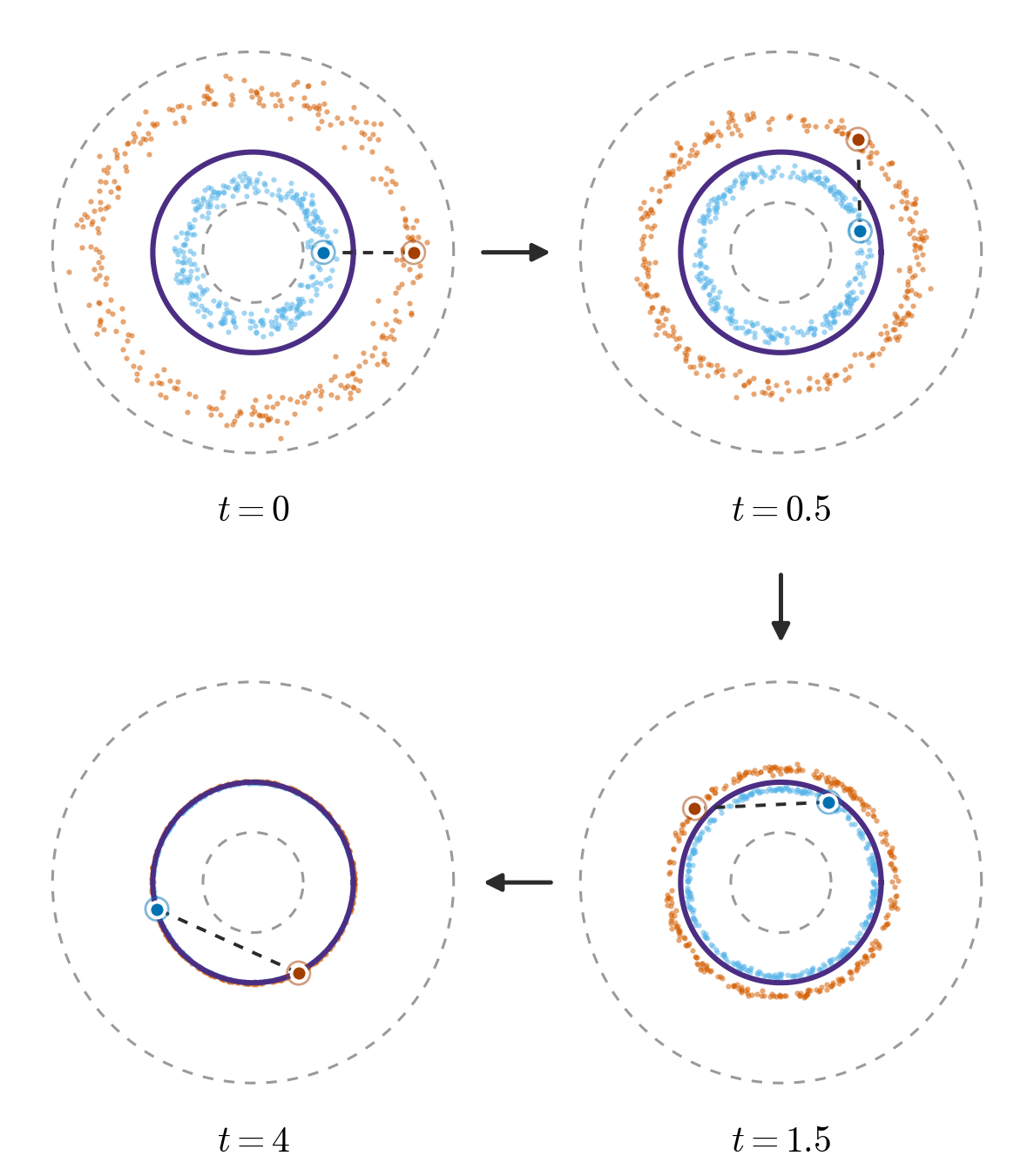}
    \vspace{-5pt}
    \caption{Snapshots of the rotationally symmetric shear dynamics of Section~\ref{subsec:motivating_example} on the annulus $A = \left\{\mathbf{x} \in \mathbb{R}^2: \frac 1 2 < |\mathbf{x}| < 2 \right \}$, with $\alpha=1$ and $\beta = 2$, shown at times $t = 0$, $0.5$, $1.5$, and $4$. Point clouds sampled from two rotationally invariant laws $\mu$ (orange, supported on $\{r > 1\}$) and $\nu$ (blue, supported on $\{r < 1\}$) are pushed forward by the flow. The pushforwards $\mathsf{S}_{0,t}\mu$ and $\mathsf{S}_{0,t}\nu$ exponentially converge to the uniform law on the circle $S^1$ (purple). In contrast, the two highlighted point trajectories initially placed on the same ray $\theta = 0$ do not converge to each other due to the angular shear. The angular shear acts only along infinitesimal stabilizer directions of the laws in $\mathcal{C}_{A}$.}  
\label{fig:rot_sym_shear}
\end{figure}

\section{Contraction Theory on the 2-Wasserstein Space}
\label{sec:wasserstein_contraction}
In this section, we rigorously formulate the general principles of intrinsic contraction theory on subsets $\mathcal{C} \subseteq \mathcal{P}_2$ of the 2-Wasserstein space.

\textbf{In Sections~\ref{sec:wasserstein_contraction} and \ref{sec:iwcc_pointwise}, we work under the following regularity assumptions.}
Let $f:[0,\infty)\times\mathbb{R}^n\to\mathbb{R}^n$ be continuous, and suppose that its spatial Jacobian $\frac{\partial f}{\partial \mathbf{x}}(t,\mathbf{x})$ is continuous. We assume that $f$ is forward complete, and we denote its forward flow by $\Phi_{r,t}$, where $0\le r\le t<\infty$. Let $\mathbf{M}:[0,\infty)\times\mathbb{R}^n\to\mathcal{S}_{++}^n$ be a continuously differentiable map into the set of positive definite real symmetric matrices.

\subsection{Problem Setup}
We denote the pushforward operators on $\mathcal{P}$ as $\mathsf{S}_{r,t}\coloneqq (\Phi_{r,t})_{\#}$. When we need to specify the underlying vector field $f(t,\mathbf{x})$, we write $\mathsf{S}^f_{r,t}$.
We lift $\mathbf{M}$ to a subset $\mathcal{C}$ of the Wasserstein space $\mathcal{P}_2$ in order to define a weighted analogue of the 2-Wasserstein distance (Definition~\ref{def:weighted_wasserstein_distance}); the definition is motivated by the Benamou--Brenier formula.
\begin{definition}[Admissible pair]
A family $(\mu_s,v_s)_{s\in[0,1]}$ is called an admissible pair if $s\mapsto \mu_s$ is a narrowly continuous map into $\mathcal{P}_2$, the vector field $v:[0,1]\times \mathbb{R}^n\to\mathbb{R}^n$ is Borel measurable, and $\int_0^1 \|v_s\|_{L^2(\mu_s)} ds < \infty$. The pair is also required to satisfy the continuity equation
\begin{equation*}
    \frac{\partial \mu_s}{\partial s}+\operatorname{div}\left(v_s\mu_s\right)=0
\end{equation*}
on $(0,1)\times\mathbb{R}^n$ in the weak sense: for every $\varphi \in C_c^\infty\left((0,1)\times\mathbb{R}^n\right)$,
\begin{equation*}
    \int_0^1 \int_{\mathbb{R}^n} \left( \partial_s \varphi(s,\mathbf{x}) +  \nabla \varphi(s,\mathbf{x}) \cdot v_s(\mathbf{x}) \right) d\mu_s(\mathbf{x}) ds = 0.
\end{equation*}
\end{definition}
If the pair $(\mu_\cdot,v_\cdot)$ is admissible, then $W_2(\mu_s, \mu_t) \le \int_s^t \|v_r\|_{L^2(\mu_r)}dr$ for all $0 \le s \le t \le 1$. In particular, $s \mapsto \mu_s\in\mathcal{P}_2$ is absolutely continuous.

The weighted distance below appears in Lisini~\cite[Corollary~2.5]{Li2009} without the restriction $\mu_\cdot \subseteq \mathcal{C}$. In this paper, we focus on the intrinsic distance on $\mathcal{C}$ and hence we impose $\mu_\cdot \subseteq \mathcal{C}$.
\begin{definition}
\label{def:weighted_wasserstein_distance}
Fix $t\ge0$, and let $(\mu_s)_{s\in[0,1]}$ be a narrowly continuous curve in $\mathcal{P}_2$. We define the $\mathbf{M}_t$-weighted action of $\mu_\cdot$ by
\begin{equation}
\label{eq:weighted_action}
\mathcal{A}_{\mathbf{M}_t}(\mu_\cdot)
    \coloneqq \inf_v \int_0^1 \!\! \int_{\mathbb{R}^n} v_s(\mathbf{x})^\top \mathbf{M}_t(\mathbf{x}) v_s(\mathbf{x}) d\mu_s(\mathbf{x}) ds,
\end{equation}
where $\mathbf{M}_t(\mathbf{x}) \coloneqq \mathbf{M}(t, \mathbf{x})$ and the infimum is taken over all vector fields $v$ such that $(\mu_s,v_s)_{s\in[0,1]}$ is an admissible pair. We allow the possibility of $\mathcal{A}_{\mathbf{M}_t} = \infty$.

For $\mu_0,\mu_1\in\mathcal{C}$ where $\mathcal{C} \subseteq \mathcal{P}_2$, we define
\begin{equation}
    W_{2, \mathcal{C},\mathbf{M}_t}^2(\mu_0,\mu_1)
    \coloneqq \inf_{\mu_\cdot \subseteq \mathcal{C}} \mathcal{A}_{\mathbf{M}_t}(\mu_\cdot),
\end{equation}
where the infimum is taken over all narrowly continuous curves $(\mu_s)_{s\in[0,1]}$ in $\mathcal{C}$ from $\mu_0$ to $\mu_1$. We allow the possibility of $W^2_{2, \mathcal{C}, \mathbf{M}_t} = \infty$. If $\mathcal{C} = \mathcal{P}_2$ or $\mathbf{M}_t \equiv I_n$, we omit it from the notation. We call $W_{2,\mathcal{C}}$ as the intrinsic (2-)Wasserstein distance on $\mathcal{C}$.
\end{definition}
\begin{remark}
Note that if $\mathcal{C} = \mathcal{P}_2$ and $\mathbf{M}_t \equiv I_n$, we have $W_{2,\mathcal{C},\mathbf{M}_t} \equiv W_2$ by the Benamou--Brenier formula~\cite[Theorems~7.2.2 and 8.3.1]{AmGi2008}. Also we have $W_{2,\mathcal{C}} \ge W_2$ on $\mathcal{C}$ by Benamou--Brenier. Note that the role of $W_{2,\mathcal C}$ relative to $W_2$ is analogous to the role of intrinsic Riemannian distance on an embedded submanifold $\mathscr M\subseteq \mathbb R^d$ relative to the ambient Euclidean distance. If $\mathcal{C}$ is weakly displacement convex, then we have $W_{2,\mathcal{C}} = W_2$ on $\mathcal{C}$.
\end{remark}

In the rest of this subsection, we prove that $W_{2,\mathcal{C},\mathbf{M}_t}$ is indeed a distance; more specifically, it is an extended metric on $\mathcal{C}$.
\begin{lemma}
\label{lemma:boundary_terms}
Let $(\mu_s, v_s)_{s \in [0,1]}$ be an admissible pair. Suppose $\Psi \in C^\infty([0,1]\times \mathbb{R}^n)$ and suppose that we have a constant $R>0$ such that $\Psi(s,\mathbf{x})=0$ whenever $|\mathbf{x}|>R$.
Then, \begin{equation}
    \int_0^1 \!\!\int_{\mathbb{R}^n} (\partial_s \Psi +\nabla_{\mathbf{x}}  \Psi \cdot v_s) d\mu_s ds = \int_{\mathbb{R}^n} \Psi_1 d\mu_1 - \int_{\mathbb{R}^n}\Psi_0 d\mu_0 .
\end{equation}
\end{lemma}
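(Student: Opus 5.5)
We extend the weak continuity equation from test functions compactly supported in the open interval $(0,1)\times\mathbb{R}^n$ to $\Psi$, which need not vanish at $s=0,1$, by multiplying $\Psi$ with a time cutoff and letting the cutoff converge to $\mathbf{1}_{[0,1]}$. Fix $\varepsilon\in(0,\frac14)$ and a smooth $\eta_\varepsilon:[0,1]\to[0,1]$ with $\eta_\varepsilon=0$ on $[0,\varepsilon]\cup[1-\varepsilon,1]$ and $\eta_\varepsilon=1$ on $[2\varepsilon,1-2\varepsilon]$. Choose it so that $\eta_\varepsilon'$ is nonnegative on $[\varepsilon,2\varepsilon]$, nonpositive on $[1-2\varepsilon,1-\varepsilon]$, and $\int\eta_\varepsilon'=1$ on each transition layer (in absolute value). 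Since $\Psi$ vanishes for $|\mathbf{x}|>R$, the product $\varphi_\varepsilon(s,\mathbf{x})\coloneqq\eta_\varepsilon(s)\Psi(s,\mathbf{x})$ lies in $C_c^\infty((0,1)\times\mathbb{R}^n)$. Plugging it into the weak formulation and using $\partial_s\varphi_\varepsilon=\eta_\varepsilon\partial_s\Psi+\eta_\varepsilon'\Psi$ gives
\begin{equation*}
\int_0^1\eta_\varepsilon(s)\int(\partial_s\Psi+\nabla_{\mathbf{x}}\Psi\cdot v_s)\,d\mu_s\,ds=-\int_0^1\eta_\varepsilon'(s)\,g(s)\,ds,\qquad g(s)\coloneqq\int\Psi_s\,d\mu_s .
\end{equation*}

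For the left-hand side, note that $\partial_s\Psi$ and $\nabla_{\mathbf{x}}\Psi$ are bounded, since they are continuous on the compact set $[0,1]\times\overline{B_R}$ and vanish outside it. Hence the integrand is dominated by $C(1+|v_s(\mathbf{x})|)$. This is integrable in $d\mu_s\,ds$ because $\int_0^1\|v_s\|_{L^1(\mu_s)}ds\le\int_0^1\|v_s\|_{L^2(\mu_s)}ds<\infty$. Dominated convergence, with $\eta_\varepsilon\to\mathbf{1}_{(0,1)}$ pointwise, then yields the left-hand side of the claim as $\varepsilon\to0$. Measurability of $s\mapsto\int(\cdots)d\mu_s$ follows from Borel measurability of $v$ together with narrow continuity of $\mu_s$, which makes $s\mapsto\mu_s$ a Borel family.

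For the right-hand side, $-\int\eta_\varepsilon' g$ is an average of $g$ over $[1-2\varepsilon,1-\varepsilon]$ minus an average of $g$ over $[\varepsilon,2\varepsilon]$. It therefore suffices that $g$ is continuous on $[0,1]$, which is the one point needing care. Write $|g(s)-g(r)|\le\|\Psi_s-\Psi_r\|_\infty+\left|\int\Psi_r\,d\mu_s-\int\Psi_r\,d\mu_r\right|$. The first term tends to $0$ as $s\to r$ by uniform continuity of $\Psi$ on $[0,1]\times\overline{B_R}$. The second tends to $0$ by narrow continuity, since $\Psi_r$ is bounded and continuous. The averages thus converge to $g(1)-g(0)=\int\Psi_1d\mu_1-\int\Psi_0d\mu_0$, which proves the lemma. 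I expect no real obstacle beyond this continuity of $g$ and the integrability domination above.
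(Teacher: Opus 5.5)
Your proposal is correct and follows essentially the same route as the paper: you multiply $\Psi$ by a time cutoff, obtain the boundary terms from continuity of $s\mapsto\int\Psi_s\,d\mu_s$ (via uniform continuity of $\Psi$ and narrow continuity of $\mu_\cdot$), and pass to the limit in the bulk term by dominated convergence using $\int_0^1\|v_s\|_{L^1(\mu_s)}ds<\infty$. The only difference is that the paper uses $\eta=\rho_\epsilon*\mathbf{1}_{[a,b]}$ and takes the limits in two stages ($\epsilon\to0+$, then $a\to0+$, $b\to1-$), whereas you use a single cutoff family $\eta_\varepsilon\to\mathbf{1}_{(0,1)}$.
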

\begin{proof}
Because $\Psi$ is uniformly continuous on the compact set $[0,1] \times \{\mathbf{x}:|\mathbf{x}|\le R\}$ and $\mu_\cdot$ is narrowly continuous, $s \mapsto \int _{\mathbb{R}^n} \Psi(s,\cdot)d \mu_s$ is continuous. Plugging in $\varphi(s,\mathbf{x}) = \eta(s) \Psi (s,\mathbf{x})$ to the continuity equation for $\eta \in C_c^\infty ((0,1))$, we have \begin{equation*}
\begin{split}
    \int_0^1 \frac{\partial \eta}{\partial s}&\int_{\mathbb{R}^n}\!\Psi(s,\mathbf{x}) d \mu_s ds \\&+\int_0^1 \eta(s) \int_{\mathbb{R}^n} \! (\partial_s \Psi + \nabla_{\mathbf{x}} \Psi\cdot v_s )d\mu_s ds =0.
\end{split}
\end{equation*}
Plug in $\eta = \rho_\epsilon * \mathbf{1}_{[a,b]}$, $0<a<b<1$, with $\rho$ being the standard mollifier on $\mathbb{R}$~\cite[Appendix~C.5]{Ev2010}. Taking the limit $\epsilon \to 0+$ by using the continuity of $s \mapsto \int _{\mathbb{R}^n} \Psi(s,\cdot)d \mu_s$ for the first term and the dominated convergence theorem for the second term gives $\int_{\mathbb{R}^n}\Psi_a d \mu_a - \int_{\mathbb{R}^n} \Psi_b d \mu_b + \int_0^1 \mathbf{1}_{[a,b]} \int_{\mathbb{R}^n}(\partial_s \Psi + \nabla_{\mathbf{x}} \Psi\cdot v_s )d\mu_s \, ds =0 $. Taking the limits $a\to0+$ and then $b \to 1-$ concludes the proof.
\end{proof}

\begin{proposition} \label{prop:extended_metric}
$W_{2,\mathcal{C}, \mathbf{M}_t}$ is an extended metric on the set $\mathcal{C}$, i.e., $(\mathcal{C}, W_{2,\mathcal{C}, \mathbf{M}_t})$ satisfies the axioms of metric space except $W_{2,\mathcal{C}, \mathbf{M}_t}$ may take the value $\infty$.
\end{proposition}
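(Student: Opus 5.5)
We verify the four axioms: $W_{2,\mathcal{C},\mathbf{M}_t}(\mu,\mu)=0$, symmetry, the triangle inequality, and positivity ($W_{2,\mathcal{C},\mathbf{M}_t}(\mu_0,\mu_1)=0$ implies $\mu_0=\mu_1$). Nonnegativity is immediate since $\mathbf{M}_t$ is positive definite.

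\emph{Zero on the diagonal.} Take the constant curve $\mu_s\equiv\mu$ with $v\equiv 0$. This pair is admissible, so the action is $0$.

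\emph{Symmetry.} Given an admissible pair $(\mu_s,v_s)$ in $\mathcal{C}$ from $\mu_0$ to $\mu_1$, the reversed pair $(\mu_{1-s},-v_{1-s})$ is admissible. Its continuity equation follows by substituting $\tilde\varphi(s,\mathbf{x})=\varphi(1-s,\mathbf{x})$ in the weak formulation. It stays in $\mathcal{C}$ and has the same weighted action, so taking infima gives symmetry.

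\emph{Triangle inequality.} This needs the standard reparametrization argument for Benamou--Brenier-type distances. First I show that the squared distance equals the square of the infimum of the weighted length
\[
L(\mu_\cdot,v)=\int_0^1\Big(\int v_s^\top\mathbf{M}_t v_s\,d\mu_s\Big)^{1/2}ds .
\]
Cauchy--Schwarz gives $L^2\le\int_0^1\!\int v_s^\top\mathbf{M}_t v_s\,d\mu_s\,ds$. Conversely, a time reparametrization to constant weighted speed achieves equality. Concretely, take $\sigma(s)=\frac{1}{L}\int_0^s(\|v_r\|_{\mathbf{M}_t}+\epsilon)\,dr$ normalized, which is strictly increasing and absolutely continuous with an absolutely continuous inverse. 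Set $\tilde\mu_\sigma=\mu_{s(\sigma)}$ and $\tilde v_\sigma=s'(\sigma)v_{s(\sigma)}$. The reparametrized curve stays in $\mathcal{C}$, remains narrowly continuous, and satisfies the weak continuity equation by a change of variables in the test function. Letting $\epsilon\to0$ gives action at most $L^2+o(1)$.

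With this in hand, concatenate near-optimal curves from $\mu_0$ to $\mu_1$ and from $\mu_1$ to $\mu_2$ on $[0,\lambda]$ and $[\lambda,1]$, with $\lambda$ chosen proportional to the lengths. The concatenated curve is narrowly continuous and lies in $\mathcal{C}$. The concatenated pair satisfies the continuity equation across the junction $s=\lambda$; here I use Lemma~\ref{lemma:boundary_terms}, whose boundary terms at $s=\lambda$ cancel. Lengths add, which gives $W\le W_{01}+W_{12}$. If either distance is infinite there is nothing to prove.

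\emph{Positivity, the main obstacle.} Since $\mathbf{M}_t$ is only pointwise positive definite and not uniformly so, I would localize. The plan is to fix a test function $\psi\in C_c^\infty(\mathbb{R}^n)$ supported in the ball $B_R$, and let $\lambda_R>0$ be the minimum eigenvalue of $\mathbf{M}_t$ on $\overline{B_R}$, which is positive by continuity and compactness. Applying Lemma~\ref{lemma:boundary_terms} with $\Psi(s,\mathbf{x})=\psi(\mathbf{x})$ gives
\[
\Big|\int\psi\,d\mu_1-\int\psi\,d\mu_0\Big|=\Big|\int_0^1\!\!\int\nabla\psi\cdot v_s\,d\mu_s\,ds\Big|\le\|\nabla\psi\|_\infty\,\lambda_R^{-1/2}\Big(\int_0^1\!\!\int v_s^\top\mathbf{M}_t v_s\,d\mu_s\,ds\Big)^{1/2},
\]
where the inequality uses Cauchy--Schwarz and the fact that the integrand vanishes outside $B_R$. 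Taking the infimum over admissible curves, zero distance forces $\int\psi\,d\mu_0=\int\psi\,d\mu_1$ for all such $\psi$, hence $\mu_0=\mu_1$. The technical care in this step lies in the reparametrization details of the triangle inequality and in handling the non-uniform weight, which the localization resolves.
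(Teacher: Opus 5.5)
Your proof is correct. For the diagonal, symmetry and positivity it coincides with the paper's argument: your $\lambda_R^{-1/2}\|\nabla\psi\|_\infty$ is just an explicit bound for the paper's $\|\mathbf{M}_t^{-1/2}\nabla\phi\|_{\sup}$, and the paper then passes from $C_c^\infty$ to $C_c$ by mollification and invokes Riesz, which you leave implicit.

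The genuine difference is the triangle inequality. You first establish the length characterization $W_{2,\mathcal{C},\mathbf{M}_t}=\inf L$ by a nonlinear time change to almost constant weighted speed, and then use additivity of length under concatenation. The paper never introduces length. It rescales the two near-optimal pairs only \emph{affinely} onto $[0,a]$ and $[a,1]$, which multiplies their actions by $1/a$ and $1/(1-a)$. It then picks $a=W_{2,\mathcal{C},\mathbf{M}_t}(\mu,\nu)/\bigl(W_{2,\mathcal{C},\mathbf{M}_t}(\mu,\nu)+W_{2,\mathcal{C},\mathbf{M}_t}(\nu,\lambda)\bigr)$, using $\min_{a\in(0,1)}\bigl(A/a+B/(1-a)\bigr)=(\sqrt{A}+\sqrt{B})^2$. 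This is your constant-speed idea applied only at the piecewise-affine level, which is all the triangle inequality needs.

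The paper's route is cheaper. Affine time changes keep test functions in $C_c^\infty$, so the only check is the junction via Lemma~\ref{lemma:boundary_terms}. Your $\sigma$ is merely bi-Lipschitz, so $\varphi(\sigma(s),\mathbf{x})$ is only Lipschitz in $s$. You therefore need an extra approximation (time-rescaling) argument for the weak continuity equation; this is exactly the step you flag as delicate. Also normalize by $L+\epsilon$ rather than $L$: the action is then at most $L(L+\epsilon)$, and the case $L=0$ is harmless.

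In exchange, your route needs no case distinction. The paper uses positivity to guarantee $a\in(0,1)$, whence its reduction to $\mu\neq\nu$, $\nu\neq\lambda$. Your route also yields a stronger structural fact the paper's proof does not give: $W_{2,\mathcal{C},\mathbf{M}_t}$ equals the infimum of the weighted lengths $\int_0^1\|v_s\|_{L^2_{\mathbf{M}_t}(\mu_s)}\,ds$ over admissible curves in $\mathcal{C}$.
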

\begin{proof}
Let $\mu,\nu,\lambda \in \mathcal{C}$. $W_{2,\mathcal{C},\mathbf{M}_t}(\mu,\nu) \ge 0$ is clear. We have $W_{2,\mathcal{C},\mathbf{M}_t}(\mu,\mu)=0$ because the pair $(s\mapsto \mu,s \mapsto 0)$ is admissible: we have $\int_0^1 \!\!\int_{\mathbb{R}^n} \!\partial_s \varphi d\mu ds = 0$ for any $\varphi \in C_c^\infty ((0,1)\times \mathbb{R}^n)$ by Fubini. If $(\mu_s, v_s)_{s \in [0,1]}$ is admissible, then $(\mu_{1-s},-v_{1-s})_{s\in[0,1]}$ is also admissible. Therefore, the $\mathbf{M}_t$-weighted action is invariant under the time flip $s \mapsto 1-s$; hence symmetry follows.

We now prove positivity. Suppose $W_{2,\mathcal{C},\mathbf{M}_t}(\mu,\nu)=0$, and let $\phi \in C_c^\infty(\mathbb{R}^n)$. Choose any admissible pair $(\mu_\cdot,v_\cdot)$ with $\mu_\cdot \subseteq \mathcal{C}$, $\mu_0=\mu$, and $\mu_1= \nu$. By Lemma~\ref{lemma:boundary_terms}, we have $\int \phi d \nu - \int \phi d \mu = \int_0^1\!\!\int\! \nabla \phi \cdot v_s d\mu_s ds$. Hence, by Cauchy--Schwarz,
$\left| \int \phi d \nu - \int \phi d\mu\right| \le \left\| \mathbf{M}_t^{-1/2} \nabla \phi\right\|_{\sup}\left(\int_0^1\!\!\int_{\mathbb{R}^n} \left\|\mathbf{M}_t^{1/2}v_s\right\|^2 d\mu_s ds\right)^{1/2}$.
Taking infimums, we conclude that $\int \phi d \nu = \int \phi d\mu$. Since $\phi \in C_c^\infty (\mathbb{R}^n)$ was arbitrary, by mollification, this equality holds for any  $\phi \in C_c (\mathbb{R}^n)$. By the Riesz representation theorem, $\mu = \nu$.

Finally, we prove $W_{2,\mathcal{C},\mathbf{M}_t}(\mu, \lambda) \le W_{2,\mathcal{C},\mathbf{M}_t} (\mu,\nu) + W_{2,\mathcal{C},\mathbf{M}_t}(\nu,\lambda)$. We may assume that $\mu \neq \nu,\,\nu \neq \lambda$, and further assume that the right-hand side is finite. Let $\varepsilon>0$ be given. Choose admissible pairs $(\mu_\cdot, v_\cdot)$ from $\mu$ to $\nu$ and $(\nu_\cdot, w_\cdot)$ from $\nu$ to $\lambda$ with $\mu_\cdot$, $\nu_\cdot \subseteq \mathcal{C}$, so that $W^2_{2,\mathcal{C},\mathbf{M}_t} (\mu,\nu) \le \int\!\!\int v_s^\top \mathbf{M}_t v_s d\mu_sds<W^2_{2,\mathcal{C},\mathbf{M}_t} (\mu,\nu) +\varepsilon$ and $W^2_{2,\mathcal{C},\mathbf{M}_t} (\nu,\lambda) \le \int\!\!\int w_s^\top \mathbf{M}_t w_s d\nu_sds < W^2_{2,\mathcal{C},\mathbf{M}_t} (\nu,\lambda)+\varepsilon$, respectively. Concatenate these two admissible pairs by affinely rescaling the parameters onto $[0,a]$ and $[a,1]$, respectively, where $a \in (0,1)$ is a parameter to be chosen. The resulting pair is admissible, because the continuity equation reduces to $\int_{\mathbb{R}^n} \varphi(a,x) d \mu_1 - \int_{\mathbb{R}^n} \varphi(a,x) d\nu_0 =0$ by Lemma~\ref{lemma:boundary_terms} for every $\varphi \in C_c^\infty((0,1)\times\mathbb{R}^n)$. 
Therefore, we have $W^2_{2,\mathcal{C},\mathbf{M}_t} (\mu,\lambda) \le \frac 1 a (W^2_{2,\mathcal{C},\mathbf{M}_t} (\mu,\nu)+\varepsilon) +\frac 1 {1-a} (W^2_{2,\mathcal{C},\mathbf{M}_t} (\nu,\lambda) +\varepsilon)$. Take the limit $\varepsilon \to 0+$ and choose $a =W_{2,\mathcal{C},\mathbf{M}_t}(\mu,\nu)/(W_{2,\mathcal{C},\mathbf{M}_t}(\mu,\nu)+W_{2,\mathcal{C},\mathbf{M}_t}(\nu,\lambda))$. This completes the proof.
\end{proof}

\subsection{Weighted Helmholtz--Hodge Decompositions}
\label{subsec:hhds}
Our contraction conditions will depend on the Helmholtz--Hodge decomposition (HHD)~\cite{BhNo2013}. Here, we develop weighted variants of the decomposition which are suitable for our purposes.
\begin{lemma}[Weighted HHD for curves in $\mathcal{P}_2$]
\label{lemma:vertical_horizontal_decomp}
Let $\mu_\cdot:[0,1]\to\mathcal P_2$ be a narrowly continuous curve, and let $t\ge0$. Define $\boldsymbol\mu(ds,d\mathbf{x})\coloneqq\mu_s(d\mathbf{x})ds$, i.e., for each $A\in\mathcal B([0,1]\times\mathbb R^n)$, $\boldsymbol\mu(A)\coloneqq\int_0^1\mu_s\left(\left\{\mathbf{x}\in\mathbb R^n:(s,\mathbf{x})\in A\right\}\right)ds$. Let $L^2_{\mathbf M_t}(\boldsymbol\mu)$ be the Hilbert space of weighted-square-integrable Borel vector fields (identified up to $\boldsymbol{\mu}$-a.e. equality) with inner product $\langle v,w\rangle_{L^2_{\mathbf M_t}(\boldsymbol\mu)} \coloneqq \int_0^1\int_{\mathbb R^n}v_s(\mathbf{x})^\top\mathbf M_t(\mathbf{x})w_s(\mathbf{x})d\mu_s(\mathbf{x})ds$. Define the subspace of infinitesimal stabilizers of the curve $\mu_\cdot$ by 
\begin{equation}
\begin{split}
    \mathcal V_{\mu_\cdot} (t) \!\coloneqq \!\Biggl\{w\in L^2_{\mathbf M_t}(\boldsymbol\mu)&\!:\! \int_0^1 \!\!\int_{\mathbb R^n} \nabla_{\mathbf{x}}\psi(s,\mathbf{x})\!\cdot\! w_s(\mathbf{x}) d\mu_s(\mathbf{x})ds\\&=0, \text{ }\forall \psi\in C_c^\infty((0,1)\times\mathbb R^n)\Biggr\}
    \end{split}
\end{equation}
and define the subspace of weighted gradient velocities by
\begin{equation}
    \mathcal H _{\mu_\cdot} (t)\coloneqq \overline{\left\{\mathbf M_t(\cdot)^{-1}\nabla_{\mathbf{x}}\psi: \psi\in C_c^\infty((0,1)\times\mathbb R^n)\right\}}^{L^2_{\mathbf M_t}(\boldsymbol\mu)}.
\end{equation}

Then $\mathcal V_{\mu_\cdot} (t)=\mathcal H _{\mu_\cdot} (t)^\perp$, where the orthogonal complement is taken in $L^2_{\mathbf M_t}(\boldsymbol\mu)$. In particular,
\begin{equation}
    L^2_{\mathbf{M}_t}(\boldsymbol{\mu}) = \mathcal{H}_{\mu_\cdot} (t) \oplus \mathcal{V}_{\mu_\cdot} (t).
\end{equation}

Moreover, if the infimum in~\eqref{eq:weighted_action} is finite and $\underline{m} I \preceq \mathbf M_t(\cdot) $ on $\bigcup_{s \in [0,1]}\operatorname{supp}(\mu_s)$ for some $\underline{m}>0$, there is a unique element in $L^2_{\mathbf M_t}(\boldsymbol\mu)$ which attains the weighted action; furthermore, this element is in $\mathcal H _{\mu_\cdot} (t)$.
\end{lemma}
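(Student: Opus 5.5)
The first part is pure Hilbert-space bookkeeping. For $\psi\in C_c^\infty((0,1)\times\mathbb R^n)$ and $w\in L^2_{\mathbf M_t}(\boldsymbol\mu)$, the weighted inner product with the generator $\mathbf M_t^{-1}\nabla_{\mathbf{x}}\psi$ is
\begin{equation*}
\langle \mathbf M_t^{-1}\nabla_{\mathbf{x}}\psi, w\rangle_{L^2_{\mathbf M_t}(\boldsymbol\mu)}=\int_0^1\!\!\int_{\mathbb R^n}\nabla_{\mathbf{x}}\psi\cdot w_s\,d\mu_s\,ds .
\end{equation*}
This uses the symmetry of $\mathbf M_t$. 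Before this identity can be used, I will check that each generator actually lies in $L^2_{\mathbf M_t}(\boldsymbol\mu)$. Since $\psi$ has compact support in $[0,1]\times\{|\mathbf{x}|\le R\}$ and $\mathbf M_t^{-1}$ is continuous there, $\mathbf M_t^{-1}\nabla\psi$ is bounded and compactly supported. Hence $\int(\nabla\psi)^\top\mathbf M_t^{-1}\nabla\psi\,d\boldsymbol\mu<\infty$.

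Given the identity, $w\in\mathcal V_{\mu_\cdot}(t)$ if and only if $w$ is orthogonal to every generator of $\mathcal H_{\mu_\cdot}(t)$. Orthogonality to a set is the same as orthogonality to its closed linear span. Therefore $\mathcal V_{\mu_\cdot}(t)=\mathcal H_{\mu_\cdot}(t)^\perp$. Because $\mathcal H_{\mu_\cdot}(t)$ is closed by definition, the projection theorem gives $L^2_{\mathbf M_t}(\boldsymbol\mu)=\mathcal H\oplus\mathcal H^\perp=\mathcal H\oplus\mathcal V$. This part also requires the Borel measurability of $(s,\mathbf x)\mapsto \mathbf M_t^{-1}\nabla\psi$, which is clear, and that $\boldsymbol\mu$ is a well-defined Borel measure. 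The latter follows because narrow continuity makes $s\mapsto\mu_s(B)$ measurable via a monotone class argument.

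For the minimization, let $\mathcal E\subseteq L^2_{\mathbf M_t}(\boldsymbol\mu)$ be the set of (equivalence classes of) $v$ such that $(\mu_\cdot,v)$ is admissible. The action is $\|v\|^2_{L^2_{\mathbf M_t}(\boldsymbol\mu)}$, and it is finite for some admissible $v$ by hypothesis. The lower bound $\underline m I\preceq\mathbf M_t$ on the supports gives $\|v\|_{L^2(\boldsymbol\mu)}\le\underline m^{-1/2}\|v\|_{L^2_{\mathbf M_t}(\boldsymbol\mu)}$. By Cauchy--Schwarz in $s$, this yields $\int_0^1\|v_s\|_{L^2(\mu_s)}ds<\infty$. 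So the integrability requirement of admissibility holds automatically for every element of $L^2_{\mathbf M_t}(\boldsymbol\mu)$. The continuity equation is affine in $v$ and reads $\langle \mathbf M_t^{-1}\nabla\psi,v\rangle=-\iint\partial_s\psi\,d\mu_s ds$ for all test $\psi$. Hence, once we have one finite-action element $v_0\in\mathcal E$,
\begin{equation*}
\mathcal E=v_0+\mathcal V_{\mu_\cdot}(t).
\end{equation*}
Here "$\supseteq$" uses exactly the observation that $L^2_{\mathbf M_t}$-membership implies the integrability condition.

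Thus $\mathcal E$ is a closed affine subspace of a Hilbert space, and the action is its squared norm. There is therefore a unique element of minimal norm, namely $v_0-P_{\mathcal V}v_0=P_{\mathcal H}v_0$. This element lies in $\mathcal H_{\mu_\cdot}(t)$ by the decomposition above, and it is the unique minimizer by strict convexity of the norm, or equivalently by the Pythagorean identity $\|P_{\mathcal H}v_0+w\|^2=\|P_{\mathcal H}v_0\|^2+\|w\|^2$ for $w\in\mathcal V$.

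The main obstacle is the step where admissibility is identified with membership in an affine subspace. It needs care on two points: the equivalence-class issue (admissibility must depend only on the $\boldsymbol\mu$-a.e. class, which it does since all conditions are integrals against $\boldsymbol\mu$), and the use of the lower bound $\underline m$ to recover the $L^1_sL^2_{\mu_s}$ integrability that the definition of admissible pair demands. Everything else is the standard projection theorem.
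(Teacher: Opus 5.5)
Your proposal is correct and follows essentially the same route as the paper. You identify $\langle \mathbf M_t^{-1}\nabla_{\mathbf x}\psi, w\rangle_{L^2_{\mathbf M_t}(\boldsymbol\mu)}$ with the weak-divergence pairing to get $\mathcal V_{\mu_\cdot}(t)=\mathcal H_{\mu_\cdot}(t)^\perp$, then use the lower bound $\underline m$ to recover the admissibility integrability, so that the finite-action admissible velocities form the closed affine subspace $v_0+\mathcal V_{\mu_\cdot}(t)$ with unique minimal-norm element $P_{\mathcal H}v_0\in\mathcal H_{\mu_\cdot}(t)$; your explicit checks that the generators lie in $L^2_{\mathbf M_t}(\boldsymbol\mu)$ and that admissibility depends only on the $\boldsymbol\mu$-a.e.\ class are details the paper leaves implicit.
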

\begin{remark}
We refer to $\mathcal{H}_{\mu_\cdot}(t)$ (resp. $\mathcal{V}_{\mu_\cdot}(t)$) as the weighted horizontal (resp. vertical) space at $\mu_\cdot$ at time $t$.
\end{remark}

\begin{proof}
Below, we write $\mathcal{V} \coloneqq \mathcal{V}_{\mu_\cdot}(t)$ and $\mathcal{H} \coloneqq \mathcal{H}_{\mu_\cdot}(t)$. Let $\mathcal G$ be the collection of Borel sets $B\subseteq\mathbb R^n$ such that $s\mapsto\mu_s(B)$ is Borel. Then $\mathcal G$ is a $\lambda$-system, and it contains the closed sets because $\mu_s(F)=\lim_{k\to\infty}\int \max\{1-kd(\mathbf{x},F),0\}d\mu_s(\mathbf{x})$ for every closed set $F$. Since the closed sets generate $\mathcal B(\mathbb R^n)$, the $\pi$--$\lambda$ theorem gives $\mathcal G=\mathcal B(\mathbb R^n)$. Now, let $\mathcal{G}'$ be the collection of Borel sets $B' \subseteq \mathbb{R}^{1+n}$ such that $s \mapsto \mu_s(B_s)$ is Borel. Then $\mathcal G'$ is a $\lambda$-system, and it contains the measurable rectangles as $\mathcal{G}=\mathcal{B}(\mathbb{R}^n)$. Therefore, another application of the $\pi$--$\lambda$ theorem gives $\mathcal{G}' = \mathcal{B}(\mathbb{R}) \otimes\mathcal{B}(\mathbb{R}^n)=\mathcal{B}(\mathbb{R}^{1+n})$. This, with the monotone convergence theorem, proves that $\boldsymbol{\mu}$ is indeed a well-defined Borel probability measure on $[0,1]\times\mathbb R^n$. Moreover, $L^2_{\mathbf{M}_t}(\boldsymbol{\mu})$ is a well-defined inner product space, and is isometrically isomorphic to $L^2(\boldsymbol{\mu})$. Hence, it is complete.

For $\psi\in C_c^\infty((0,1)\times\mathbb R^n)$, set $h_\psi\coloneqq\mathbf M(t,\cdot)^{-1}\nabla_{\mathbf{x}}\psi$.
Clearly, we have $h_\psi\in L^2_{\mathbf M_t}(\boldsymbol\mu)$. For every $w\in L^2_{\mathbf M_t}(\boldsymbol\mu)$, $\langle w,h_\psi\rangle_{L^2_{\mathbf M_t}(\boldsymbol\mu)} = \int_0^1 \!\! \int_{\mathbb R^n} \nabla_{\mathbf{x}}\psi\cdot w_s d\mu_s(\mathbf{x})ds$. Hence $w\in\mathcal V$ if and only if $w$ is orthogonal to every $h_\psi$. Therefore, $\mathcal V = \{h_\psi:\psi\in C_c^\infty((0,1)\times\mathbb R^n)\}^\perp = \mathcal H^\perp$, and since $\mathcal H$ is a closed subspace, we have $\mathcal V^\perp=(\mathcal H^\perp)^\perp=\mathcal H$.

Now we prove the second assertion. The lower bound assumption on $\mathbf{M}_t$ implies that each $v \in L^2_{\mathbf{M}_t}(\boldsymbol{\mu})$ satisfies $\int_0^1 \left\|v_s\right\|_{L^2(\mu_s)}ds < \infty$. By finiteness of the infimum, there is at least one Borel $v_0$ s.t. $v_0\in L^2_{\mathbf M_t}(\boldsymbol\mu)$ and $ \partial_s\mu_s+\operatorname{div}(\mu_s v_{0,s})=0$ in the weak sense. Another velocity field $v\in L^2_{\mathbf M_t}(\boldsymbol\mu)$ satisfies the same continuity equation with the same curve $s \mapsto \mu_s$ if and only if $v-v_0\in\mathcal V=\mathcal{H}^\perp$. Thus the velocities in $L^2_{\mathbf M_t}(\boldsymbol\mu)$ solving the continuity equation for the fixed curve are precisely the affine subspace $v_0+\mathcal{H}^\perp$. Applying the Pythagorean theorem to the decomposition $L^2_{\mathbf{M}_t} = \mathcal{H} \oplus \mathcal{H}^\perp$ gives a unique element $v_*$ of minimal $\mathbf M_t$-norm in this affine subspace. Moreover, $v_* \perp \mathcal{V}$, so that $v_* \in \mathcal{H}$.
\end{proof}
\begin{remark}
Let $\mu:[0,1]\to\mathcal P_2(\mathbb R^n)$ be narrowly continuous, and let $\boldsymbol\mu(ds,d\mathbf{x})=\mu_s(d\mathbf{x})ds$ be the associated measure on $[0,1]\times\mathbb R^n$. By monotone convergence and the decomposition $f=f^+-f^-$, any Borel measurable $f(s, \mathbf{x}):[0,1]\times\mathbb{R}^n \to \overline {\mathbb{R}}$ gives rise to a Borel measurable $s \mapsto \int_{\mathbb{R}^n} f(s,\mathbf{x})d\mu_s(\mathbf{x})$, where the integral may be left undefined for some $s$. Moreover, we have $ \int_{[0,1]\times\mathbb{R}^n} f(s,\mathbf{x}) d\boldsymbol{\mu}(s,\mathbf{x})=\int_0^1\int_{\mathbb{R}^n} f(s,\mathbf{x}) d \mu_s(\mathbf{x}) ds$: if the former is well-defined, then the latter is also well-defined and the two values agree. Hereafter, we use these facts freely without mentioning it.
\end{remark}

We now state the analogous decomposition for $L^2_{\mathbf{M}_t}(\mu)$ for each $\mu \in \mathcal{P}_2$. Here, $L^2_{\mathbf{M}_t}(\mu)$ is the Hilbert space of Borel vector fields $v$ on $\mathbb{R}^n$ such that $\int v^\top \mathbf{M}_t v d\mu <\infty$, with vector fields identified up to $\mu$-a.s. equality.

Define 
\begin{equation}
\begin{split}
    \mathcal V_{\mu} (t) \!\coloneqq \!\Biggl\{w\in L^2_{\mathbf M_t}(\mu)&:\!\int_{\mathbb R^n} \nabla_{\mathbf{x}}\psi(\mathbf{x})\!\cdot\! w(\mathbf{x}) d\mu(\mathbf{x})\\&=0, \text{ }\forall \psi\in C_c^\infty(\mathbb R^n)\Biggr\}
    \end{split}
\end{equation}
and
\begin{equation}
    \mathcal H_\mu(t) \coloneqq \overline{\{\mathbf M_t^{-1}\nabla_{\mathbf{x}}\psi:\psi\in C_c^\infty(\mathbb R^n)\}}^{L^2_{\mathbf M_t}(\mu)}.
\end{equation}

\begin{lemma}[Weighted HHD for measures in $\mathcal{P}_2$]
For every $\mu\in\mathcal P_2$ and $t\ge0$, we have $\mathcal{V}_\mu(t)=\mathcal{H}_\mu(t)^\perp$ and hence
\begin{equation}
    L^2_{\mathbf M_t}(\mu) = \mathcal H_\mu(t)\oplus \mathcal V_\mu(t).
\end{equation}
For each $w \in L^2_{\mathbf{M}_t} (\mu)$, the orthogonal projection of $w$ onto $\mathcal{H}_\mu(t)$ is the unique minimizer of the kinetic energy functional $v\mapsto \| v\|^2_{L^2_{\mathbf{M}_t}(\mu)}\coloneqq \int_{\mathbb{R}^n} v(\mathbf{x})^\top \mathbf{M}_t(\mathbf{x})v(\mathbf{x})d \mu(\mathbf{x})$ on $L^2_{\mathbf{M}_t}(\mu)$ subject to $\operatorname{div}(v\mu ) = \operatorname{div}(w\mu )$ as distributions in $\mathcal{D}'(\mathbb{R}^n)$.
\end{lemma}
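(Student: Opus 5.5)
The plan mirrors the proof of Lemma~\ref{lemma:vertical_horizontal_decomp}, now on $\mathbb{R}^n$ rather than $[0,1]\times\mathbb{R}^n$.

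\textbf{Step 1: $L^2_{\mathbf M_t}(\mu)$ is a Hilbert space.} Since $\mathbf M_t(\mathbf{x})$ is positive definite for every $\mathbf{x}$, the map $v\mapsto \mathbf M_t^{1/2}v$ is an isometric isomorphism onto $L^2(\mu;\mathbb R^n)$. Here $\mathbf M_t^{1/2}$ is a continuous, hence Borel, function of $\mathbf{x}$. So $L^2_{\mathbf M_t}(\mu)$ is complete, and no lower bound on $\mathbf M_t$ is needed for this part.

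\textbf{Step 2: the orthogonal decomposition.} For $\psi\in C_c^\infty(\mathbb R^n)$ set $h_\psi\coloneqq\mathbf M_t^{-1}\nabla\psi$. Then $h_\psi$ is continuous and compactly supported, so $h_\psi\in L^2_{\mathbf M_t}(\mu)$. For every $w\in L^2_{\mathbf M_t}(\mu)$ the weight cancels:
\[
\langle w,h_\psi\rangle_{L^2_{\mathbf M_t}(\mu)}=\int \nabla\psi\cdot w\,d\mu .
\]
Hence $w\in\mathcal V_\mu(t)$ if and only if $w\perp h_\psi$ for all $\psi$. This gives $\mathcal V_\mu(t)=\{h_\psi\}^\perp$. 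An orthogonal complement is unchanged by passing to the closed span, so $\mathcal V_\mu(t)=\mathcal H_\mu(t)^\perp$. Since $\mathcal H_\mu(t)$ is closed, the projection theorem yields $L^2_{\mathbf M_t}(\mu)=\mathcal H_\mu(t)\oplus\mathcal V_\mu(t)$.

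\textbf{Step 3: the minimization characterization.} First I identify the constraint set. By definition of distributional divergence, $\operatorname{div}(v\mu)=\operatorname{div}(w\mu)$ in $\mathcal D'(\mathbb R^n)$ means
\[
\int\nabla\psi\cdot(v-w)\,d\mu=0\quad\text{for all }\psi\in C_c^\infty(\mathbb R^n).
\]
Both integrals are well defined because $v,w\in L^2_{\mathbf M_t}(\mu)$ and $\mathbf M_t^{-1/2}\nabla\psi$ is bounded. So the condition is exactly $v-w\in\mathcal V_\mu(t)$, and the admissible set is the closed affine subspace $w+\mathcal V_\mu(t)$.

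Write $w=P_{\mathcal H}w+P_{\mathcal V}w$. For $v=w+z$ with $z\in\mathcal V_\mu(t)$, we have $v=P_{\mathcal H}w+(P_{\mathcal V}w+z)$, where the second summand lies in $\mathcal V_\mu(t)$. Pythagoras gives
\[
\|v\|^2_{L^2_{\mathbf M_t}(\mu)}=\|P_{\mathcal H}w\|^2+\|P_{\mathcal V}w+z\|^2 .
\]
This is minimized exactly when $z=-P_{\mathcal V}w$, i.e.\ $v=P_{\mathcal H}w$. That choice is admissible and gives the unique minimizer.

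\textbf{Main obstacle.} Nothing here is deep. The only points needing care are that the pairing $\int\nabla\psi\cdot w\,d\mu$ is finite without a uniform lower bound on $\mathbf M_t$, and that the distributional constraint is read correctly as membership in $\mathcal V_\mu(t)$. Finiteness holds because $\mathbf M_t^{-1/2}\nabla\psi$ is continuous with compact support. Cauchy--Schwarz in the $\mathbf M_t$-weighted inner product then bounds the pairing.
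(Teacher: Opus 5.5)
Your proposal is correct and follows essentially the same route as the paper, which simply defers to the proof of the curve-wise decomposition (Lemma~\ref{lemma:vertical_horizontal_decomp}). The shared steps are completeness via the isometry $v\mapsto\mathbf M_t^{1/2}v$, the identity $\langle w,h_\psi\rangle=\int\nabla\psi\cdot w\,d\mu$ giving $\mathcal V_\mu(t)=\mathcal H_\mu(t)^\perp$, and Pythagoras on the affine constraint set $w+\mathcal V_\mu(t)$; your remark that no lower bound on $\mathbf M_t$ is needed here is accurate, since in the curve version that bound served only to guarantee admissibility of the competitors.
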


\begin{proof}
The proof is almost identical to that of Lemma~\ref{lemma:vertical_horizontal_decomp}.
\end{proof}

In other words, each choice of the matrix-valued function $\mathbf{M}(t,\cdot)$ gives a canonical isomorphism
\begin{equation}
    L^2_{\mathbf M_t}(\mu)/\ker [v \mapsto \operatorname{div} (v\mu)] \simeq \mathcal H_\mu(t).
\end{equation}

\subsection{The Intrinsic Wasserstein Contraction Theorem}

We are now ready to state a version of the main theorem. We first digress to a version (Theorem~\ref{thm:wasserstein_contraction}) where the intrinsic Wasserstein contraction condition (IWCC)~\eqref{eq:s_integrated_iwcc} is presented in a form which involves integration in $s$. In Section~\ref{sec:iwcc_pointwise}, we present a geometrically clearer version relying on (adequately defined) horizontal tangent cones $\operatorname{HorTan}_\mu(t,\mathcal{C})$ for $\mu \in \mathcal{C}$. This version will be obtained as a consequence of Theorem~\ref{thm:wasserstein_contraction}: see Theorem~\ref{thm:pointwise_iwcc}.

\begin{theorem}[IWCC, $s$-integrated version]
\label{thm:wasserstein_contraction}
Let $\mathcal{C} \subseteq \mathcal{P}_2$ be a nonempty, forward-invariant set under the flow $(\mathsf{S}_{r,t})_{t \ge r}$. Let $\lambda \in \mathbb{R}$, and define $B_\lambda = \dot{\mathbf{M}} + D_{\mathbf{x}}f^\top \mathbf{M} + \mathbf{M} D_{\mathbf{x}} f + 2\lambda \mathbf{M}$ where $\dot{\mathbf{M}}\coloneqq\partial_t \mathbf{M}+D_{\mathbf{x}} \mathbf{M} [f]$ is the material derivative of $\mathbf{M}$ along $\dot{\mathbf{x}} = f(t, \mathbf{x})$. Let $\Omega_{\mathcal{C}}:=\bigcup_{\mu \in \mathcal{C}} \operatorname{supp}(\mu) \subseteq \mathbb{R}^n$, and assume $\kappa_T <\infty$ for all $T>0$ where
\begin{equation}
\label{eq:kappaT_def}
\kappa_T \coloneqq \sup_{\substack{0\le \tau\le T\\\mathbf{x}\in\Omega_{\mathcal C}}}
\lambda_{\max}\left(\mathbf M(\tau,\mathbf{x})^{-1/2} B_\lambda (\tau,\mathbf{x})\mathbf M(\tau,\mathbf{x})^{-1/2}\right).
\end{equation}
Assume $\underline{m} I \preceq \mathbf{M}(t, \mathbf{x}) \preceq \overline {m} I$ for all $(t,\mathbf{x}) \in [0,\infty)\times \Omega_{\mathcal C}$ for some positive constants $\underline{m}, \overline {m}>0$.

Moreover, suppose for every $t\ge0$ and every admissible curve $(\mu_s,v_s)_{s\in[0,1]}$ with $\mu_\cdot\subseteq \mathcal C$ and $v\in \mathcal{H}_{\mu_\cdot}(t)$, we have the following \emph{$s$-integrated IWCC}:
\begin{equation} 
\label{eq:s_integrated_iwcc}
\int_0^1 \!\!\int_{\mathbb{R}^n} v_s (\mathbf{x})^\top  B_\lambda(t, \mathbf{x})  v_s (\mathbf{x}) d \mu_s(\mathbf{x}) d s \le 0.
\end{equation}

Then, the exponential bound
\begin{equation}
\begin{split}
    W_2(\mathsf{S}_{r,t} \mu, \mathsf{S}_{r,t}\nu) &\le W_{2,\mathcal{C}} (\mathsf{S}_{r,t} \mu, \mathsf{S}_{r,t}\nu)\\& \le \sqrt{\frac{\overline{m}}{\underline{m}}} e^{-\lambda (t-r)} W_{2,\mathcal{C}}(\mu, \nu).
\end{split}
\end{equation}
holds for all $\mu, \nu \in \mathcal{C}$ and $t \ge r \ge 0$. 
\end{theorem}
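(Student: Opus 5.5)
The plan is to follow the classical Lohmiller--Slotine argument, with the Riemannian path replaced by an admissible curve in $\mathcal{C}$. The curve is pushed forward by the flow, and the weighted action is differentiated in time.

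Fix $\mu,\nu\in\mathcal C$ and $r\ge0$. We may assume $W_{2,\mathcal C}(\mu,\nu)<\infty$. Given $\varepsilon>0$, choose an admissible pair $(\mu_s,v_s)$ with $\mu_\cdot\subseteq\mathcal C$, $\mu_0=\mu$, $\mu_1=\nu$, and $\int_0^1\!\int|v_s|^2d\mu_sds<W_{2,\mathcal C}^2(\mu,\nu)+\varepsilon$. For $t\ge r$ set $\mu^t_s\coloneqq\mathsf S_{r,t}\mu_s$. This curve lies in $\mathcal C$ by forward invariance and is narrowly continuous, since $\Phi_{r,t}$ is continuous. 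Define the transported velocity
\begin{equation*}
v^t_s\coloneqq\bigl(D\Phi_{r,t}\,v_s\bigr)\circ\Phi_{r,t}^{-1}.
\end{equation*}
Testing the continuity equation with $\varphi\circ(\mathrm{id},\Phi_{r,t})$ shows that $(\mu^t_s,v^t_s)$ is admissible; invertibility of the flow holds since $f$ is $C^1$ in $\mathbf x$. A cutoff approximation is needed here because $\varphi\circ\Phi_{r,t}$ need not be smooth. Define the energy
\begin{equation*}
E(t)\coloneqq\int_0^1\!\!\int (D\Phi_{r,t}v_s)^\top\mathbf M_t(\Phi_{r,t})(D\Phi_{r,t}v_s)\,d\mu_s\,ds\ \ge\ \mathcal A_{\mathbf M_t}(\mu^t_\cdot).
\end{equation*}
Differentiating in $t$ and using $\partial_tD\Phi=D_{\mathbf x}f\,D\Phi$ together with the definition of the material derivative gives the exact identity
\begin{equation*}
\tfrac{d}{dt}E=\int\!\!\int (v^t_s)^\top\bigl(B_\lambda-2\lambda\mathbf M_t\bigr)v^t_s\,d\mu^t_sds.
\end{equation*}
The bound $\kappa_T<\infty$ gives $\frac{d}{dt}E\le(\kappa_T-2\lambda)E$ for $t\le T$, so $E$ stays finite. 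It also justifies differentiating under the integral, by dominated convergence on compact time intervals. Note that the supports of $\mu^t_s$ lie in $\Omega_{\mathcal C}$.

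The difficulty is that $v^t_s$ need not be horizontal, so the hypothesis \eqref{eq:s_integrated_iwcc} does not apply to it directly. I would avoid differentiating $E$ itself and instead work with $a(t)\coloneqq\mathcal A_{\mathbf M_t}(\mu^t_\cdot)$, whose optimal velocity $h^t\in\mathcal H_{\mu^t_\cdot}(t)$ exists by Lemma~\ref{lemma:vertical_horizontal_decomp}. Fix $t_0$ and transport $h^{t_0}$ forward to times $t\ge t_0$. This gives an upper bound $a(t)\le E_{t_0}(t)$ with $E_{t_0}(t_0)=a(t_0)$, and the derivative of $E_{t_0}$ at $t_0$ equals $\int\!\!\int (h^{t_0})^\top B_\lambda h^{t_0}-2\lambda a(t_0)\le-2\lambda a(t_0)$ by \eqref{eq:s_integrated_iwcc}. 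Hence the upper right Dini derivative satisfies $D^+a(t_0)\le-2\lambda a(t_0)$ for every $t_0$.

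To turn this into a Grönwall bound I need some one-sided regularity of $a$. The upper bounds $a\le E_{t_0}$, whose derivatives are locally bounded by $\kappa_T$, give upper semicontinuity from the right. This, combined with the Dini inequality at every point, should suffice for a Dini-comparison lemma, yielding $a(t)\le e^{-2\lambda(t-r)}a(r)$. This Dini/comparison step is where I expect the main obstacle: the forward-transport derivative must be justified uniformly, and there is no a priori continuity of $a$. A fallback is to restrict to a partition and chain the estimates, using the continuity of $E_{t_0}$.

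Finally, compare the weighted and unweighted actions using $\underline m I\preceq\mathbf M\preceq\overline m I$:
\begin{equation*}
\underline m\,W^2_{2,\mathcal C}(\mathsf S_{r,t}\mu,\mathsf S_{r,t}\nu)\le a(t)\le e^{-2\lambda(t-r)}a(r)\le e^{-2\lambda(t-r)}\,\overline m\,\bigl(W_{2,\mathcal C}^2(\mu,\nu)+\varepsilon\bigr).
\end{equation*}
Letting $\varepsilon\to0$ and taking square roots gives the stated bound, and the first inequality $W_2\le W_{2,\mathcal C}$ follows from Benamou--Brenier as in the remark above.
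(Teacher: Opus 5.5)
Your strategy is the paper's: set $a(\tau)=\mathcal A_{\mathbf M_\tau}(\mathsf S_{r,\tau}\mu_\cdot)$, push the minimal horizontal velocity at time $t_0$ forward by the flow to bound $a(t_0+\delta)$, use the $s$-integrated IWCC to get $D^+a\le-2\lambda a$, and close with a Dini comparison lemma. The gap is in the one-sided regularity you feed into that lemma.

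Upper semicontinuity from the right, together with $D^+a\le-2\lambda a$ at every point, does not imply $a(t)\le e^{-2\lambda(t-r)}a(r)$. Take $a=0$ on $[0,1)$ and $a(\tau)=e^{-2\lambda(\tau-1)}$ on $[1,2]$. This function is nonnegative and right-continuous, and it satisfies $D^+a=-2\lambda a$ everywhere on $[0,2)$. Yet $a(2)>0=e^{-4\lambda}a(0)$. Right-sided behaviour is already encoded in $D^+a$. What a Dini comparison needs is a condition forbidding upward jumps approached from the left, namely $a(\tau)\le\liminf_{\delta\to0+}a(\tau-\delta)$, which is the hypothesis of Lemma~\ref{lemma:comparison_dini}. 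Your fallback of chaining over a partition does not repair this. It would need $E_{t_k}(t_k+\delta)\le(1-2\lambda\delta+o(\delta))\,a(t_k)$ with the $o(\delta)$ uniform in $t_k$. You have no such uniformity, since the minimizers $h^{t_k}$ change with $t_k$.

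The fix is already in your own estimates. The growth bound you note, $\frac{d}{dt}E_{t_0}\le(\kappa_T-2\lambda)E_{t_0}$, gives $a(\tau)\le E_{\tau-\delta}(\tau)\le e^{(\kappa_T-2\lambda)\delta}a(\tau-\delta)$ for $0<\delta\le\tau-r$; the bound is trivial if $a(\tau-\delta)=\infty$. Letting $\delta\to0+$ yields lower semicontinuity from the left, and $\delta=\tau-r$ yields finiteness of $a$. This is exactly how the paper argues.

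There is also a smaller point. Only the upper bound $\kappa_T$ on $\mathbf M^{-1/2}B_\lambda\mathbf M^{-1/2}$ is assumed. So the difference quotients of the energy density are bounded above by $\frac{e^{\gamma_T\delta}-1}{\delta}\,\xi^\top\mathbf M\xi$, with $\gamma_T\coloneqq\kappa_T-2\lambda$, but not below. Dominated convergence therefore does not justify your ``exact identity'' for the derivative of $E$. Use the reverse Fatou lemma instead. It gives precisely the one-sided bound you need: $D^+a(t_0)\le\int_0^1\!\int(h^{t_0}_s)^\top(B_\lambda-2\lambda\mathbf M)h^{t_0}_s\,d\mu^{t_0}_s\,ds$.
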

Note that $\lambda \in \mathbb{R}$ could be any real number. 

\begin{remark}
The finiteness assumption for $\kappa_T$ is automatically satisfied if we have $B_\lambda \preceq 0$ pointwise as assumed in \cite{LoSl1998,BoSl2019}, or if $\Omega_{\mathcal{C}}$ is bounded. It means that the infinitesimal energy weighted by $\mathbf{M}$ cannot grow arbitrarily fast along the flow: see~\eqref{eq:energy_density_growth}.
\end{remark}

We postpone the proof of Theorem~\ref{thm:wasserstein_contraction} and first state and prove some useful lemmas.

\begin{lemma}
\label{lemma:pushforward_admissible}
Let $(\mu_s,v_s)_{s\in[0,1]}$ be an admissible pair, and let $\Phi:[0,1]\times\mathbb R^n\to\mathbb R^n$ be a $C^1$ map such that each $\Phi_s\coloneqq\Phi(s,\cdot)$ is an injection. Suppose $\nu_s\coloneqq(\Phi_s)_\#\mu_s\in\mathcal P_2$ for all $s\in[0,1]$ and assume
\begin{equation}
\label{eq:integrability_condition_pushforward_lemma}
    \int_0^1 \left( \int_{\mathbb R^n} \left|D_{\mathbf{x}}\Phi_s(\mathbf{x})v_s(\mathbf{x}) + \partial_s\Phi_s(\mathbf{x})\right|^2d\mu_s(\mathbf{x})\right)^{1/2}ds<\infty.
\end{equation}
Define $w_s$ on $\mathbb R^n$ by
\begin{equation}
\label{eq:pushforwarded_velocity_field}
    w_s(\mathbf{y}) = \left[ D_{\mathbf{x}}\Phi_s(\mathbf{x})v_s(\mathbf{x}) + \partial_s\Phi_s(\mathbf{x}) \right]_{\mathbf{x}=\Phi_s^{-1}(\mathbf{y})} \mathbf 1_{\Phi_s(\mathbb R^n)}(\mathbf{y}).
\end{equation}
Then $(\nu_s,w_s)_{s\in[0,1]}$ is admissible.
\end{lemma}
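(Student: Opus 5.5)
We check the three requirements of an admissible pair in turn: Borel measurability of $w$, the integrability bound, and the weak continuity equation. Narrow continuity of $s\mapsto\nu_s$ comes first. For $\phi\in C_b(\mathbb R^n)$ we have $\int\phi\,d\nu_s=\int\phi(\Phi_s(\mathbf{x}))\,d\mu_s(\mathbf{x})$. The map $(s,\mathbf{x})\mapsto\phi(\Phi(s,\mathbf{x}))$ is jointly continuous and bounded. Since $\{\mu_s\}$ is narrowly compact, and hence tight, along any convergent sequence $s_k\to s$, the integrals converge: $\phi\circ\Phi_{s_k}\to\phi\circ\Phi_s$ locally uniformly, tightness handles the tails, and narrow convergence of $\mu_{s_k}$ handles the rest.

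\textbf{Measurability.} Set $u_s(\mathbf{x})\coloneqq D_{\mathbf{x}}\Phi_s(\mathbf{x})v_s(\mathbf{x})+\partial_s\Phi_s(\mathbf{x})$, which is Borel in $(s,\mathbf{x})$. The map $G(s,\mathbf{x})=(s,\Phi_s(\mathbf{x}))$ is a continuous injection of the Polish space $[0,1]\times\mathbb R^n$. By the Lusin--Souslin theorem, its image is Borel and its inverse is Borel on that image. Hence $w(s,\mathbf{y})=u\circ G^{-1}(s,\mathbf{y})\,\mathbf 1_{G([0,1]\times\mathbb R^n)}(s,\mathbf{y})$ is Borel.

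\textbf{Integrability.} Injectivity gives $\Phi_s^{-1}(\Phi_s(\mathbf{x}))=\mathbf{x}$, so $\int|w_s|^2d\nu_s=\int|w_s(\Phi_s(\mathbf{x}))|^2d\mu_s=\int|u_s|^2d\mu_s$. Therefore $\int_0^1\|w_s\|_{L^2(\nu_s)}ds$ equals the quantity in~\eqref{eq:integrability_condition_pushforward_lemma} and is finite.

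\textbf{Continuity equation.} This is the main step. Fix $\varphi\in C_c^\infty((0,1)\times\mathbb R^n)$ and set $\Psi(s,\mathbf{x})\coloneqq\varphi(s,\Phi_s(\mathbf{x}))$. The chain rule gives
\begin{equation*}
\partial_s\Psi+\nabla_{\mathbf{x}}\Psi\cdot v_s=\left[\partial_s\varphi+\nabla\varphi\cdot u_s\right](s,\Phi_s(\mathbf{x})).
\end{equation*}
Pushing forward and using $w_s\circ\Phi_s=u_s$, the weak continuity equation for $(\nu,w)$ becomes $\int_0^1\!\int(\partial_s\Psi+\nabla_{\mathbf{x}}\Psi\cdot v_s)\,d\mu_s\,ds=0$.

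The obstacle is that $\Psi$ is only $C^1$ and need not have compact spatial support, since $\Phi_s^{-1}(\operatorname{supp}\varphi)$ may be unbounded. So we cannot plug $\Psi$ directly into the continuity equation for $(\mu,v)$. I would handle this in two approximations.

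First, replace $\Psi$ by $\Psi\,\zeta_R$, where $\zeta_R(\mathbf{x})=\zeta(\mathbf{x}/R)$ is a standard cutoff equal to $1$ on $B_1$. The extra term is $\Psi\nabla\zeta_R\cdot v_s$. It is bounded by $\|\varphi\|_\infty\|\nabla\zeta\|_\infty R^{-1}|v_s|$ and vanishes pointwise as $R\to\infty$. Since $\int_0^1\!\int|v_s|\,d\mu_s\,ds\le\int_0^1\|v_s\|_{L^2(\mu_s)}ds<\infty$, dominated convergence kills it. The main terms converge by dominated convergence as well: $\partial_s\Psi$ and $\nabla\Psi\cdot v_s$ are dominated by $|\partial_s\varphi|+|\nabla\varphi||u_s|$ composed with $\Phi_s$, which is integrable by the integrability bound just shown.

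Second, the function $\Psi\zeta_R$ is $C^1$ with compact support in $(0,1)\times\mathbb R^n$, and is therefore valid in the weak formulation after mollification. Its mollifications converge in $C^1$ uniformly on a compact set, and $v\in L^1(\boldsymbol\mu)$, so the identity passes to the limit. This $C^1$-test-function extension, together with the cutoff argument controlled by the two integrability conditions, is the step I expect to require the most care.
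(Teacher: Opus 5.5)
Your proposal is correct and follows the paper's proof essentially step for step: tightness (Prokhorov) for narrow continuity of $\nu_\cdot$, Lusin--Souslin applied to $(s,\mathbf{x})\mapsto(s,\Phi_s(\mathbf{x}))$ for Borel measurability of $w$, change of variables for the integrability bound, and extension of the weak formulation to $C^1_c$ test functions by mollification combined with the cutoff $\zeta(\mathbf{x}/R)$, whose $\nabla\zeta_R$ term is $\mathcal{O}(R^{-1})$. One small precision: only the combined integrand $\partial_s\Psi+\nabla_{\mathbf{x}}\Psi\cdot v_s=[\partial_s\varphi+\nabla\varphi\cdot u_s](s,\Phi_s(\mathbf{x}))$ is dominated by $\|\partial_s\varphi\|_\infty+\|\nabla\varphi\|_\infty|u_s|$, not each of the two terms separately, but the combined integrand is exactly what the dominated convergence step needs.
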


\begin{proof}
Let $f \in C_b(\mathbb{R}^n)$. Then, we have $\int fd\nu_s= \int f \circ \Phi_s d\mu_s$. We prove the latter is continuous in $s$. Let $s_k \to s \in [0,1]$ and let $\varepsilon>0$ be given. By Prokhorov's theorem, there exists a compact $K \subseteq \mathbb{R}^n$ such that $\mu_s (K^C) < \varepsilon/(2\|f\|_{\sup} + 1)$ for all $s \in [0,1]$.
We have
$\left| \int f\circ \Phi_{s_k} d \mu_{s_k} - \int f \circ \Phi_s d \mu_s \right| \le \int | f \circ \Phi_{s_k} - f \circ \Phi_s | d \mu_{s_k} + \left| \int f\circ \Phi_s d \mu_{s_k} - \int f \circ \Phi_s d \mu_s\right|$.
The first term can be decomposed as $\int_K + \int_{K^c}$. For sufficiently large $k$, $\int _K < \varepsilon$ because $f \circ \Phi$ is uniformly continuous on the compact set $[0,1] \times K$, and $\int_{K^C} < \varepsilon$ because $\mu_{s_k} (K^C) < \varepsilon/(2\|f\|_{\sup} + 1)$. The second term is less than $\varepsilon$ for sufficiently large $k$ because $f \circ \Phi_s \in C_b(\mathbb{R}^n)$ and $\mu_\cdot$ is narrowly continuous. This proves that $(\nu_s)_{s \in [0,1]}$ is narrowly continuous.

Since $[0,1] \times \mathbb R^n$ is Polish and $T:(s,\mathbf{x}) \mapsto (s, \Phi_s(\mathbf{x}))$ is a Borel-measurable injection, the Lusin--Souslin theorem~\cite[Corollary~15.2]{Ke1995} implies that $T([0,1] \times \mathbb R^n)$ is Borel and that $T$ is a Borel isomorphism onto its image. This proves that $w$ is Borel measurable on $[0,1]\times\mathbb R^n$. Moreover, 
\begin{equation}
\begin{split}
    \int_0^1 \| w_s\|_{L^2(\nu_s)} ds  =  \int_0^1 \Biggl(&\int_{\mathbb{R}^n} |D_{\mathbf{x}}\Phi_s (\mathbf{x})v_s(\mathbf{x})\\&+ \partial_s \Phi _s (\mathbf{x}) |^2 d\mu_s(\mathbf{x}) \Biggr)^{1/2}ds <\infty.
    \end{split}
\end{equation}

It remains to check the continuity equation. First, we note that the weak formulation for $(\mu_s,v_s)$ extends to test functions in $C_c^1((0,1)\times\mathbb R^n)$. Indeed, if $\varphi\in C_c^1((0,1)\times\mathbb R^n)$, then the mollifications $\varphi^\epsilon=\eta_\epsilon*\varphi$ by the standard mollifier $\eta$ approximate $\varphi$ uniformly, and their first partial derivatives also uniformly converge to those of $\varphi$~\cite{Ev2010}. Passing to the limit in the weak formulation is justified by $\int_0^1\|v_s\|_{L^1(\mu_s)}ds<\infty$.

Now let $\psi\in C_c^\infty((0,1)\times\mathbb R^n)$ and define $\varphi(s,\mathbf{x}):=\psi(s,\Phi_s(\mathbf{x}))=\psi(T(s,\mathbf{x}))$. We have $\varphi \in C^1([0,1]\times\mathbb R^n)$ and the support of $\varphi$ is contained in $[\delta, 1-\delta]\times \mathbb{R}^n$ for sufficiently small $\delta >0$. Therefore, if we define $\varphi_R (s,\mathbf{x})= \varphi(s,\mathbf{x}) \chi(\mathbf{x}/R)$, where we fix any $\chi \in C_c^\infty(\mathbb{R}^{n})$ such that $0 \le \chi \le 1$, $\chi = 1$ on $\|\mathbf{x}\| \le 1$, $\chi = 0$ on $\|\mathbf{x}\| \ge 2$, then we have $\varphi_R \in C_c^1((0,1)\times\mathbb{R}^n)$, so it may be used as a test function by the preceding mollification argument. In the corresponding weak formulation, the $\nabla \chi$ term vanishes as $R \to \infty$ because it is of order $\mathcal{O}(R^{-1})$. The remaining terms converge by dominated convergence, where the domination is justified by \eqref{eq:integrability_condition_pushforward_lemma}. Thus,
\begin{equation*}
\begin{split}
    0&=\int_0^1 \!\! \int_{\mathbb R^n}\partial_s\psi(s,\Phi_s(\mathbf{x})) +\nabla \psi(s,\Phi_s(\mathbf{x}))\cdot (D_{\mathbf{x}}\Phi_s(\mathbf{x})v_s(\mathbf{x})\\
    &\qquad\qquad\qquad\qquad+\partial_s\Phi_s(\mathbf{x})) d\mu_s(\mathbf{x})ds\\
    &= \int_0^1 \!\! \int_{\mathbb R^n}  \partial_s\psi(s,\mathbf{y}) +\nabla\psi(s,\mathbf{y})\cdot w_s(\mathbf{y}) d\nu_s(\mathbf{y})ds,
\end{split}
\end{equation*}
where the second equality uses $\nu_s=(\Phi_s)_\# \mu_s$ and $w_s(\Phi_s(\mathbf{x}))=D_{\mathbf{x}}\Phi_s(\mathbf{x})v_s(\mathbf{x})+\partial_s \Phi_s(\mathbf{x})$. This completes the proof.
\end{proof}

Finally, for the sake of completeness, we write down a variant of the comparison lemma:
\begin{lemma}
\label{lemma:comparison_dini}
Let $A:[a,b] \to \mathbb{R}$ be lower semicontinuous from the left: for all $t \in (a,b]$, $A(t) \le \liminf_{s \to t-} A(s)$. If for some $\rho \in \mathbb{R}$ we have $D^+ A(t)\coloneqq \limsup _{h \to 0+} \frac{A(t+h)-A(t)}{h} \le - \rho A(t)$ for all $t\in[a,b)$, then we have $A(t) \le e^{-\rho (t-a)}A(a)$ for all $t \in [a,b]$.
\end{lemma}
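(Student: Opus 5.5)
Set $B(t)\coloneqq e^{\rho(t-a)}A(t)$; the claim $A(t)\le e^{-\rho(t-a)}A(a)$ is then equivalent to $B(t)\le B(a)$ for all $t\in[a,b]$. The argument has three steps.

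\emph{Step 1: transfer the hypotheses to $B$.} Since $e^{\rho(t-a)}$ is continuous and positive, $B$ is again lower semicontinuous from the left. For the Dini derivative, write
\[
\frac{B(t+h)-B(t)}{h}=e^{\rho(t+h-a)}\,\frac{A(t+h)-A(t)}{h}+A(t)\,\frac{e^{\rho(t+h-a)}-e^{\rho(t-a)}}{h}.
\]
As $h\to0+$, the second term converges to $\rho e^{\rho(t-a)}A(t)$. In the first term, the prefactor converges to the positive number $e^{\rho(t-a)}$, so the $\limsup$ of the product equals $e^{\rho(t-a)}D^+A(t)$. This remains valid if $D^+A(t)=-\infty$, because the prefactor stays positive and bounded. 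Hence
\[
D^+B(t)=e^{\rho(t-a)}\bigl(D^+A(t)+\rho A(t)\bigr)\le 0\qquad\text{for all } t\in[a,b).
\]

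\emph{Step 2: the case of a strict inequality.} Fix $\varepsilon>0$ and set $B_\varepsilon(t)\coloneqq B(t)-\varepsilon(t-a)$. Then $B_\varepsilon$ is lower semicontinuous from the left, $B_\varepsilon(a)=B(a)$, and $D^+B_\varepsilon(t)\le-\varepsilon<0$ on $[a,b)$. Let
\[
S\coloneqq\{t\in[a,b]: B_\varepsilon(\tau)\le B_\varepsilon(a)\ \text{for all } \tau\in[a,t]\},\qquad t^*\coloneqq\sup S.
\]
Clearly $a\in S$. We show $t^*\in S$ and then $t^*=b$.
\begin{itemize}
\item \emph{$t^*\in S$.} Every $\tau<t^*$ lies in $[a,t]$ for some $t\in S$, so $B_\varepsilon(\tau)\le B_\varepsilon(a)$. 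If $t^*>a$, take $\tau\to t^*-$ and use left lower semicontinuity: $B_\varepsilon(t^*)\le\liminf_{\tau\to t^*-}B_\varepsilon(\tau)\le B_\varepsilon(a)$. This is the only place where lower semicontinuity is needed, and only from the left, which is why that one-sided hypothesis suffices.
\item \emph{$t^*=b$.} Suppose $t^*<b$. Since $D^+B_\varepsilon(t^*)<0$, there is $\delta>0$ such that for all $h\in(0,\delta)$ we have $B_\varepsilon(t^*+h)<B_\varepsilon(t^*)\le B_\varepsilon(a)$. Then $[a,t^*+h]\subseteq S$ for such $h$, contradicting the definition of $t^*$.
\end{itemize}
Therefore $B(t)-\varepsilon(t-a)\le B(a)$ for all $t\in[a,b]$.

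\emph{Step 3: conclude.} Letting $\varepsilon\to0+$ in Step 2 gives $B(t)\le B(a)$ on $[a,b]$, which is the claimed bound.

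The main obstacle is the sup argument in Step 2. A lower semicontinuous function can jump downward, and that is harmless. It could also jump upward from the right, but the strict negativity of $D^+B_\varepsilon$ at $t^*$ rules this out: it forces $B_\varepsilon(t^*+h)<B_\varepsilon(t^*)$ for all small $h>0$. The $\varepsilon$-perturbation is needed because $D^+\le0$ alone does not yield this pointwise decrease at $t^*$, whereas a strictly negative Dini derivative does.
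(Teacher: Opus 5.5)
Your proof is correct and follows the paper's route. The paper also passes to $B(t)=e^{\rho(t-a)}A(t)$, but it then simply cites \cite[Lemma~2.3]{WaZh2011} to conclude that $B$ is non-increasing; you instead verify $D^+B\le 0$ explicitly in Step~1 and prove the needed consequence $B(t)\le B(a)$ yourself, via the $\varepsilon$-perturbation and the supremum argument, which makes the argument self-contained.
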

\begin{proof}
    The statement follows from applying \cite[Lemma~2.3]{WaZh2011} to the function $B(t)\coloneqq g(t)A(t)$ where $g(t)\coloneqq e^{\rho(t-a)}$ to conclude that $B$ is non-increasing.
\end{proof}

Now we are ready to prove Theorem~\ref{thm:wasserstein_contraction}.

\begin{proof}
Note that $\underline m I \preceq \mathbf{M} \preceq \overline m I$ implies, for every $\alpha, \beta \in \mathcal{C}$ and every $\tau \ge 0$,
\begin{equation}
    \underline m W^2_{2,\mathcal{C}}  (\alpha, \beta) \le W^2_{2,\mathcal{C},\mathbf{M}_\tau}(\alpha, \beta) \le \overline m W^2_{2,\mathcal{C}}(\alpha, \beta).
\end{equation}
Therefore, it suffices to prove the weighted estimate \begin{equation}
W^2_{2,\mathcal{C},\mathbf M_t}(\mathsf S_{r,t} \mu_0, \mathsf S_{r,t} \mu_1) \le e^{-2 \lambda (t-r)} W^2_{2,\mathcal {C}, \mathbf{M}_r} (\mu_0, \mu_1),
\end{equation}
for all $\mu_0, \mu_1 \in \mathcal{C}$ and $0 \le r< t$. Fix $0 \le r<t<T$, and fix a narrowly continuous curve $(\mu_s)_{s \in [0,1]}$ from $\mu_0$ to $\mu_1$ with values in $\mathcal{C}$. For all $\tau \in [r,t]$, set $\mu^\tau_s = \mathsf S _{r,\tau} \mu_s \in \mathcal{C}$ and define $A(\tau) = \mathcal{A}_{\mathbf{M}_\tau} (\mu^\tau_\cdot)$. We will apply Lemma~\ref{lemma:comparison_dini} to the function $A$, which will give us the bound $A(t)\le e^{-2\lambda(t-r)}A(r)$. Note that we may assume $A(r)<\infty$.

Hereafter, we check the conditions of Lemma~\ref{lemma:comparison_dini}. We first prove an auxiliary estimate for the flow. Since $\mathcal C$ is forward invariant and $\Phi_{\tau, \tau+h}$ is continuous, $\Phi_{\tau,\tau+h}(\Omega_{\mathcal C})\subseteq\Omega_{\mathcal C}$ whenever $0\le\tau\le\tau+h\le T$.
For any fixed $\mathbf{x}\in\Omega_{\mathcal C}$ and $\xi\in\mathbb R^n$, using the variational equation~\cite[Theorem~V.3.1]{Ha2002} gives:
\begin{equation}
\frac{d}{dh}\!\!\left[\xi_h^\top\mathbf M(\tau_h,\Phi_{\tau,\tau_h}(\mathbf{x}))\xi_h\right]\!\!=\! \xi_h ^\top \!\!\left[B_\lambda-2\lambda\mathbf M \right]\!(\tau_h,\Phi_{\tau,\tau_h}(\mathbf{x})) \xi_h ,
\end{equation}
where $\tau_h \coloneqq \tau+h$ and $\xi_h \coloneqq D\Phi_{\tau,\tau_h}(\mathbf{x})\xi$. By the definition of $\kappa_T$, the right-hand side is at most $(\kappa_T-2\lambda)$ times the expression being differentiated. Hence, with $\gamma_T\coloneqq \kappa_T-2\lambda$, we have
\begin{equation}
\label{eq:energy_density_growth}
\xi_h^\top \mathbf M(\tau_h,\Phi_{\tau,\tau_h}(\mathbf{x})) \xi_h \le e^{\gamma_T h}\xi^\top\mathbf M(\tau,\mathbf{x})\xi.
\end{equation}

We next verify that $A$ is finite and lower semicontinuous from the left. Let $\tau\in(r,t]$. For each $0<h\le\tau-r$, if $A(\tau-h)<\infty$, then the last statement of Lemma~\ref{lemma:vertical_horizontal_decomp} applies because $\bigcup_{s \in [0,1]}\operatorname{supp}\mu_s^{\tau-h}\subseteq\Omega_{\mathcal C}$. We let $u^{\tau-h}$ be the minimizing velocity for $A(\tau-h)$. Pushing the admissible pair $(\mu^{\tau-h}_s, u^{\tau-h}_s)_{s \in [0,1]}$ forward by $\Phi_{\tau-h,\tau}$ (here we use Lemma~\ref{lemma:pushforward_admissible}; the integrability condition is satisfied because of \eqref{eq:energy_density_growth} and $A(\tau -h)<\infty$) and using \eqref{eq:energy_density_growth} gives
\begin{equation}
A(\tau)\le e^{\gamma_T h}A(\tau-h),
\end{equation}
which trivially holds in the case of $A(\tau-h)=\infty$. Taking $h=\tau-r$ proves $A(\tau)<\infty$ and hence $A<\infty$ everywhere. Moreover, letting $h\to0+$ gives $A(\tau)\le\liminf_{h\to0+}A(\tau-h)$, so $A$ is lower semicontinuous from the left.

It remains to prove the Dini derivative inequality. Let $\tau\in[r,t)$, and let $u^\tau \in \mathcal{H}_{\mu_\cdot^\tau}(\tau)$ be the minimizing velocity for $A(\tau)$. For $0<h\le t-\tau$, pushing the pair $(\mu_s^\tau,u_s^\tau)_{s\in[0,1]}$ forward by $\Phi_{\tau,\tau+h}$ using Lemma~\ref{lemma:pushforward_admissible} gives, by the definition of $A(\tau+h)$,
\begin{equation}
\begin{split}
\frac{A(\tau_h)-A(\tau)}{h} \!\le\! \int_0^1 \!\!\int_{\mathbb R^n} &\frac{1}{h} \Big[ \left(D\Phi_{\tau,\tau_h}(\mathbf{x})u_s^\tau(\mathbf{x})\right)^\top\\& \mathbf M(\tau_h,\Phi_{\tau,\tau_h}(\mathbf{x})) \! \left(D\Phi_{\tau,\tau_h}(\mathbf{x})u_s^\tau(\mathbf{x})\right)\\
&- u_s^\tau(\mathbf{x})^\top\mathbf M(\tau,\mathbf{x})u_s^\tau(\mathbf{x})\Big] d\mu_s^\tau(\mathbf{x})ds,
\end{split}
\end{equation}
where $\tau_h \coloneqq \tau+h$.

For each $x \in \Omega_{\mathcal{C}}$, the integrand is no greater than $\left|\frac{e^{\gamma_T h}-1} {h} \right|u^\tau_s(\mathbf{x})^\top \mathbf{M}(\tau, \mathbf{x})u^\tau_s(\mathbf{x})$ by \eqref{eq:energy_density_growth}. Since $A(\tau)<\infty$ and $\left|\frac{e^{\gamma_T h}-1}{h}\right| \le |\gamma_T| e^{|\gamma_T| T}$, the reverse Fatou lemma is applicable for any $(h_n)_{n \ge 1}$ strictly decreasing to $0$. This gives:
\begin{equation}
\begin{aligned}
D^+A(\tau) &\le \int_0^1\!\!\int_{\mathbb R^n} u_s^\tau(\mathbf{x})^\top
\left[ B_\lambda-2\lambda\mathbf M \right] u_s^\tau(\mathbf{x}) d\mu_s^\tau(\mathbf{x})ds\\
&=\int_0^1\!\!\int_{\mathbb R^n} u_s^\tau(\mathbf{x})^\top B_\lambda u_s^\tau(\mathbf{x}) d\mu_s^\tau(\mathbf{x})ds - 2\lambda A(\tau)\\
&\le -2 \lambda A(\tau),
\end{aligned}
\end{equation}
where $B_\lambda = B_\lambda(\tau,\mathbf{x})$, $\mathbf{M}=\mathbf{M}(\tau,\mathbf{x})$, and the last inequality comes from the IWCC. Therefore, Lemma~\ref{lemma:comparison_dini} can be applied to conclude $A(t)\le e^{-2\lambda(t-r)}A(r)$.

Since $\mu_\cdot^t$ joins $\mathsf S_{r,t}\mu_0$ to $\mathsf S_{r,t}\mu_1$ inside $\mathcal C$, we get $W^2_{2,\mathcal C,\mathbf M_t} (\mathsf S_{r,t}\mu_0,\mathsf S_{r,t}\mu_1) \le e^{-2\lambda(t-r)} \mathcal A_{\mathbf M_r}(\mu_\cdot)$.
Taking the infimum over narrowly continuous curves $\mu_\cdot$ in $\mathcal C$ from $\mu_0$ to $\mu_1$ proves the weighted estimate. This concludes the proof of the theorem.
\end{proof}
\begin{remark}
    Even without the condition $\mathbf{M} \preceq \overline{m}I$, we still have the exponential upper bound $W_{2}(\mathsf{S}_{r,t}\mu_0,\mathsf{S}_{r,t} \mu_1) \le W_{2,\mathcal{C}}(\mathsf{S}_{r,t}\mu_0,\mathsf{S}_{r,t} \mu_1) \le \frac{1}{\sqrt{\underline{m}}} e^{-\lambda(t-r)}W_{2,\mathcal{C},\mathbf{M}_r}(\mu_0,\mu_1)$.
\end{remark}

Finally, we record a gauge invariance inherent to our setup, which is related to the weighted Helmholtz--Hodge decompositions in Section~\ref{subsec:hhds}.

\begin{proposition}[Vertical gauge invariance]
Let $f=g+z$ for some functions $g$ and $z$, where  $g$ and $D_{\mathbf{x}}g$ are continuous. Let $\mathcal{C} \subseteq \mathcal{P}_2$ be a forward-invariant subset under the flow $(\mathsf{S}^f_{r,s})_{s \ge r}$. Suppose $\int_{\mathbb{R}^n} \nabla \psi \cdot z_t d\mu = 0 $ for all $t \ge 0$, $\psi \in C_c^\infty(\mathbb{R}^n)$, and $\mu \in \mathcal{C}$. Also, suppose for each $T>0$ there exists some $L_T > 0$ such that
\begin{equation}
    |f(t,\mathbf{x})|+ |g(t,\mathbf{x})| \le L_T (1+|\mathbf{x}|)
\end{equation}
on $[0,T]\times\mathbb{R}^n$. (This implies $g$ is forward complete by Gr\"{o}nwall's inequality.)

Then $f$ and $g$ induce the same pushforward maps on $\mathcal{C}$:
\begin{equation}
    \mathsf{S}^f_{r,s}\mu = \mathsf{S}^g_{r,s}\mu,
\end{equation}
for all $\mu \in \mathcal{C}$ and $s \ge r \ge 0$. In particular, $\mathcal{C}$ is forward invariant under $\mathsf{S}^g_{r,s}$. Moreover, if $g$ satisfies the conditions of Theorem~\ref{thm:wasserstein_contraction}, then the conclusion of that theorem also holds for $\mathsf{S}^f_{r,s}$, $s \ge r \ge 0$.
\end{proposition}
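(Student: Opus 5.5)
The plan is to show that, for $\mu\in\mathcal C$, both curves $\tau\mapsto\mathsf S^f_{r,\tau}\mu$ and $\tau\mapsto\mathsf S^g_{r,\tau}\mu$ solve the same continuity equation $\partial_\tau\rho_\tau+\operatorname{div}(g_\tau\rho_\tau)=0$. We then conclude equality by a uniqueness theorem for continuity equations with locally Lipschitz velocity fields of linear growth.

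First, set $\mu_\tau\coloneqq\mathsf S^f_{r,\tau}\mu$, which lies in $\mathcal C$ by forward invariance. By the standard Lagrangian computation (as in the proof of Lemma~\ref{lemma:pushforward_admissible}, with $\varphi(\tau,\Phi^f_{r,\tau}(\mathbf x))$), the curve $(\mu_\tau,f_\tau)$ satisfies the continuity equation weakly on any $(r,T)$. The integrability $\int_r^T\|f_\tau\|_{L^2(\mu_\tau)}d\tau<\infty$ follows from the linear growth bound together with a Gr\"onwall estimate on second moments: $|\Phi^f_{r,\tau}(\mathbf x)|\le C_T(1+|\mathbf x|)$ and $\mu\in\mathcal P_2$.

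Next, split $f=g+z$ inside the weak formulation. For a test function $\varphi\in C_c^\infty((r,T)\times\mathbb R^n)$ and each fixed $\tau$, the slice $\varphi(\tau,\cdot)\in C_c^\infty(\mathbb R^n)$. Since $\mu_\tau\in\mathcal C$, the hypothesis gives $\int\nabla\varphi(\tau,\cdot)\cdot z_\tau\,d\mu_\tau=0$ for every $\tau$. Integrating in $\tau$, we need measurability and integrability of $z=f-g$, which hold because $|z|\le 2L_T(1+|\mathbf x|)$. This shows that $(\mu_\tau,g_\tau)$ is also an admissible solution of the continuity equation starting from $\mu$ at $\tau=r$; the initial datum is attained narrowly by continuity.

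The step I expect to be the main obstacle is the uniqueness: that any narrowly continuous solution with $\int\!\!\int|g|\,d\mu_\tau d\tau<\infty$ starting at $\mu$ equals $(\Phi^g_{r,\tau})_\#\mu$. Here $g$ is $C^1$ in $\mathbf x$ (hence locally Lipschitz) with linear growth. I would invoke the superposition principle/uniqueness result \cite[Proposition~8.1.7/8.1.8]{AmGi2008}. Alternatively, a duality argument works: solve the backward transport equation $\partial_\tau\phi+g\cdot\nabla\phi=0$ with terminal datum $\phi_T\in C_c^\infty$ via $\phi(\tau,\mathbf x)=\phi_T(\Phi^g_{\tau,T}(\mathbf x))$. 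This $\phi$ is $C^1$ and compactly supported in space on bounded time intervals, since the flow is a homeomorphism with growth bounds. Testing the equation against it via Lemma~\ref{lemma:boundary_terms} (after the $C^1$ extension noted in Lemma~\ref{lemma:pushforward_admissible}) yields $\int\phi_T\,d\mu_T=\int\phi_T\circ\Phi^g_{r,T}\,d\mu$. Hence $\mu_T=\mathsf S^g_{r,T}\mu$.

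Forward invariance of $\mathcal C$ under $\mathsf S^g$ is then immediate. For the final claim, apply Theorem~\ref{thm:wasserstein_contraction} to $g$ on the forward-invariant set $\mathcal C$, and replace $\mathsf S^g_{r,t}$ by $\mathsf S^f_{r,t}$ in the resulting bound.
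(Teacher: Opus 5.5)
Your proposal is correct and takes essentially the same route as the paper. In both, you show that $\mathsf S^f_{r,\cdot}\mu$ solves the continuity equation with velocity $f$, use forward invariance and the vertical hypothesis on $z$ to replace $f$ by $g$, and conclude by uniqueness for the continuity equation driven by the locally Lipschitz, linearly growing field $g$. The only difference is in the uniqueness step. The paper applies \cite[Proposition~8.1.7]{AmGi2008} to the signed difference $\mu^f_\cdot-\mu^g_\cdot$, which forces it to also check that $\mathsf S^g_{r,\cdot}\mu$ solves the equation and that $s\mapsto|\mu^f_s-\mu^g_s|$ is measurable. Your representation-formula or backward-transport duality argument instead identifies $\mathsf S^f_{r,T}\mu$ with $(\Phi^g_{r,T})_\#\mu$ directly.
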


\begin{proof}
Let $\mu \in \mathcal{C}\subseteq \mathcal{P}_2$ and $s \ge r \ge 0$ be given. Fix any $T>s$. 
Below, the symbols $h$, $h_1$, $h_2$ mean either $f$ or $g$. Define $\mu_t^h \coloneqq \mathsf{S}^h_{r,t} \mu$ on $t \in [r,T]$.

By dominated convergence, $t \mapsto \mu^h_t$, $h \in \{f,g\}$ are narrowly continuous. Moreover, any solution to $\dot{\mathbf{x}}= h(t,\mathbf{x})$ on $[r,T]$ satisfies $|\mathbf{x}(t)| \le (|\mathbf{x}(r)|+1)e^{L_T T}$ for all $t \in [r,T]$ and all initial conditions $\mathbf{x}(r)\in\mathbb{R}^n$ by Gr\"{o}nwall's inequality.

Also, \begin{equation} \label{eq:lemma816_cond}
\int_r^T\!\!\!\! \int_{\mathbb{R}^n} \! | h_1(s,\mathbf{x})|d\mu_s^{h_2} (\mathbf{x})ds =\!\! \int_r^T \!\!\!\!\int_{\mathbb{R}^n} \! |h_1(s,\Phi_{r,s}^{h_2}(\mathbf{x}))| d \mu (\mathbf{x})ds.
\end{equation}
Because the integrand is bounded above by $L_T \left(1+(|\mathbf{x}|+1)e^{L_T T} \right)$ and we have $\int |\mathbf{x}| d \mu (\mathbf{x}) \le \left(\int |\mathbf{x}|^2 d \mu (\mathbf{x}) \right)^{1/2}  <\infty$, the whole integral in \eqref{eq:lemma816_cond} is finite. Therefore, we could apply \cite[Lemma 8.1.6]{AmGi2008} to conclude that $t \mapsto \mu^h_t$ is a weak solution of the continuity equation $\partial_t \mu^h_t + \nabla \cdot (h_t \mu_t)=0$ on $(r,T) \times \mathbb{R}^n$. Moreover, $\mu^f_\cdot$ is also a weak solution of $\partial_t \mu_t + \nabla \cdot (g_t \mu_t)=0$ because we have $\int_{\mathbb{R}^n} \nabla \psi \cdot (f_t - g_t) d \mu^f_t  =0$ for all $t \in [r,T]$ and all $\psi \in C_c^\infty (\mathbb{R}^n)$. Therefore, the narrowly continuous curves of finite signed measures $t \mapsto \pm(\mu^f_t - \mu^g_t)$ weakly solve $\partial_t \mu_t + \nabla \cdot (g_t\mu_t) = 0$.

Here, note that $\int_r^T \!\!\int_{\mathbb{R}^n} \left|g_s(\mathbf{x})\right| \,d\!\left|\mu^f_s-\mu^g_s\right|\!(\mathbf{x})ds < \infty$ because $\left|\mu^f_s-\mu^g_s\right| \le \mu^f_s +\mu^g_s$. The outer integrand is measurable in $s$ because by the Riesz representation theorem, $s \mapsto |\sigma_s|  (U)$ is lower semicontinuous for each open $U \subseteq \mathbb{R}^n$ and each narrowly continuous curve of finite signed measures $s\mapsto \sigma_s$ on $(\mathbb{R}^n, \mathcal{B}(\mathbb{R}^n))$.

Since $\mu^f_r = \mu^g_r$, the uniqueness theorem for the continuity equation~\cite[Proposition 8.1.7]{AmGi2008} gives us $\mu^f_\cdot = \mu^g_\cdot$. In particular, we have $\mathsf{S}^f_{r,s}\mu = \mathsf{S}^g_{r,s}\mu$. This completes the proof of the proposition.
\end{proof}

\section{IWCC in Pointwise Form}
\label{sec:iwcc_pointwise}
Theorem~\ref{thm:wasserstein_contraction} is formulated using infinitesimal stabilizers of the whole curve, and the relevant Hilbert space was $L^2_{\mathbf{M}_t}(\boldsymbol{\mu})$, with $\boldsymbol{\mu}(ds,d\mathbf{x}) = \mu_s(d\mathbf{x})ds$. In this section, we derive a pointwise sufficient condition for the $s$-integrated IWCC~\eqref{eq:s_integrated_iwcc}. As expected, this sufficient condition will be stated on a suitably defined tangent cone for each $\mu \in \mathcal{C}$, hence contracting infinitesimal curve lengths which are tangent to $\mathcal{C}$.

Recall that if $K\subseteq X$ is a subset of a normed vector space and $x \in K$, the (Bouligand) contingent cone~\cite{AuFr2009} is defined by $T_K(x) \coloneqq \left\{ v \in X : \liminf_{h \to 0+} d (x+hv, K)/h = 0 \right\}$.
We consider the analogous object in the 2-Wasserstein space. The following definition of the Wasserstein contingent cone previously appeared in \cite{BoFr2024}.

\begin{definition}[Wasserstein contingent cone]
Let $\mathcal{C} \subseteq \mathcal{P}_2$. The Wasserstein contingent cone $\operatorname{Tan}_\mu (\mathcal{C})$ at $\mu \in \mathcal{C}$ is defined as $\operatorname{Tan}_\mu (\mathcal{C})\!\coloneqq\! \left\{ \!v\! \in\! L^2(\mathbb{R}^n,\mathbb{R}^n;\mu)\!:\! \liminf_{h \to 0+} \! W_2\big((\mathrm{Id}+hv)_{\#}\mu,\mathcal{C} \big) / h \!=\! 0 \!\right\}\!$.
\end{definition}
This is well-defined because $(\mathrm{Id}+hv)_{\#}\mu$ does not depend on the choice of the representative $v \in L^2(\mathbb{R}^n,\mathbb{R}^n;\mu)$.

Now we define the tangent objects which will play a key role in our pointwise form of the IWCC.
\begin{definition}[Weighted horizontal tangent cone]
\label{def:weighted_horizontal_tangent_cone}
The weighted horizontal tangent cone \emph{(whtc)} of $\mathcal{C}$ at $\mu \in \mathcal{C}$ (and at time $t$) is defined as:
\begin{equation}
\label{eq:def_hortan}    \operatorname{HorTan}_\mu(t,\mathcal{C})\coloneqq  \mathcal{H}_\mu(t) \cap \operatorname{Tan}_\mu(\mathcal{C}),
\end{equation}
where $\mathcal{H}_\mu(t)\coloneqq \overline{\left\{\mathbf{M}_t^{-1} \nabla_{\mathbf{x}} \psi : \psi \in C_c^\infty(\mathbb{R}^n)\right\}}^{L^2_{\mathbf{M}_t}(\mu)}$.
\end{definition}
Note that both $\mathcal{H}_\mu(t)$ and $\operatorname{Tan}_\mu(\mathcal{C})$ consist of Borel-measurable vector fields identified up to $\mu$-a.s. equality, and hence their intersection, $\operatorname{HorTan}_\mu(t,\mathcal{C})$, also consists of such elements.

The following lemma is closely related to \cite[Proposition~8.4.6]{AmGi2008} but does not require $\|v_s\|_{L^2(\mu_s)}\le |\mu'|(s)$ to hold $s$-a.e.
\begin{lemma}[Euler approximation for admissible pairs]
\label{lemma:euler_approx}
    Let $(\mu_s,v_s)_{s\in[0,1]}$ be an admissible pair. Then, we have 
    \begin{equation}
    \label{eq:euler_type_approx}
        \lim_{h \to 0+} \frac{1}{h}W_2\big(\mu_{s+h}, (\mathrm{Id}+h v_{s})_{\#}\mu_s\big)  = 0
    \end{equation}
    for a.e. $s \in [0,1]$.
    If moreover $\mu_s\in\mathcal C$ for all $s\in[0,1]$, we have $v_s \in \operatorname{Tan}_{\mu_s}(\mathcal{C})$ for a.e. $s \in [0,1]$.
\end{lemma}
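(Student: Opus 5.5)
The plan follows the abstract's hint: represent the curve through the superposition principle, then take a.e.\ Lebesgue points of the velocity along trajectories.

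\textbf{Step 1 (superposition).} Since $(\mu_s,v_s)$ is admissible, we have $\int_0^1\|v_s\|_{L^1(\mu_s)}ds\le\int_0^1\|v_s\|_{L^2(\mu_s)}ds<\infty$. The superposition principle \cite[Theorem~8.2.1]{AmGi2008} then gives a probability measure $\boldsymbol\eta$ on $\Gamma\coloneqq C([0,1];\mathbb R^n)$ with two properties. First, $\boldsymbol\eta$-a.e.\ $\gamma$ is absolutely continuous with $\dot\gamma(s)=v_s(\gamma(s))$ for a.e.\ $s$. Second, $(e_s)_\#\boldsymbol\eta=\mu_s$ for every $s$, where $e_s(\gamma)=\gamma(s)$. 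The plan for this step is to use the version of superposition valid under an $L^1$-in-time bound on $\|v_s\|_{L^2}$. If that version is not directly available, I would reparametrize $s$ by arclength $\sigma(s)=\int_0^s(1+\|v_r\|_{L^2(\mu_r)})dr$. This makes the velocity $L^2$ in time, and the statement \eqref{eq:euler_type_approx} transfers back because the reparametrization is bi-Lipschitz-absolutely continuous with positive derivative a.e.

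\textbf{Step 2 (coupling).} For each $s,h$, the plan $(e_{s+h},\,e_s+h\,v_s\circ e_s)_\#\boldsymbol\eta$ couples $\mu_{s+h}$ and $(\mathrm{Id}+hv_s)_\#\mu_s$. Hence
\begin{equation*}
\frac{1}{h^2}W_2^2\big(\mu_{s+h},(\mathrm{Id}+hv_s)_\#\mu_s\big)\le\int_\Gamma\Big|\frac1h\int_s^{s+h}\big(v_r(\gamma(r))-v_s(\gamma(s))\big)dr\Big|^2d\boldsymbol\eta(\gamma).
\end{equation*}
Now define the map $F:[0,1]\to L^2(\boldsymbol\eta;\mathbb R^n)$ by $F(r)(\gamma)\coloneqq v_r(\gamma(r))$. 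This map is Bochner measurable, since $(r,\gamma)\mapsto v_r(\gamma(r))$ is jointly Borel and $L^2(\boldsymbol\eta)$ is separable. It is also Bochner integrable, because $\|F(r)\|_{L^2(\boldsymbol\eta)}=\|v_r\|_{L^2(\mu_r)}$ and this is integrable in $r$. By Minkowski's integral inequality, the right-hand side above is at most
\begin{equation*}
\Big(\frac1h\int_s^{s+h}\|F(r)-F(s)\|_{L^2(\boldsymbol\eta)}dr\Big)^2 .
\end{equation*}
The Banach-valued Lebesgue differentiation theorem says that a.e.\ $s$ is a Lebesgue point of $F$, and at such points this quantity tends to $0$. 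That proves \eqref{eq:euler_type_approx}.

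The main obstacle is exactly the identification above. One must check that $\gamma(s+h)-\gamma(s)=\int_s^{s+h}v_r(\gamma(r))dr$ holds for $\boldsymbol\eta$-a.e.\ $\gamma$ \emph{simultaneously for all} $s,h$, and that the resulting Bochner object is the right one. The plan here is to use Fubini on $[0,1]\times\Gamma$ to get $\dot\gamma(r)=v_r(\gamma(r))$ for a.e.\ $r$, $\boldsymbol\eta$-a.e.\ $\gamma$. Absolute continuity of each such $\gamma$ then yields the integral identity for every $s,h$.

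\textbf{Step 3 (tangency).} If every $\mu_s$ lies in $\mathcal C$, take $s$ in the full-measure set from Step 2. Since $\mu_{s+h}\in\mathcal C$,
\begin{equation*}
W_2\big((\mathrm{Id}+hv_s)_\#\mu_s,\mathcal C\big)\le W_2\big((\mathrm{Id}+hv_s)_\#\mu_s,\mu_{s+h}\big)=o(h).
\end{equation*}
For a.e.\ $s$ we also have $v_s\in L^2(\mu_s)$. Together these give $v_s\in\operatorname{Tan}_{\mu_s}(\mathcal C)$. The case $s$ near $1$ is harmless, since $s=1$ is a null set.
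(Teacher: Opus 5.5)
Your proposal is correct and follows essentially the paper's route: superposition, the Bochner-integrable map $r\mapsto v_r(\gamma(r))$ into the separable space $L^2$ of the path measure, the trajectory coupling of $\mu_{s+h}$ with $(\mathrm{Id}+hv_s)_\#\mu_s$, the Banach-valued Lebesgue differentiation theorem, and then the definition of the contingent cone. The differences are minor. First, the paper invokes the $L^1$-type superposition principle of Ambrosio--Crippa directly; your reparametrization fallback is valid, and it would be needed if you used \cite[Theorem~8.2.1]{AmGi2008}, which asks for $\int_0^1\!\int|v_s|^p\,d\mu_s\,ds<\infty$ with $p>1$, but the paper has no need for it. Second, the paper identifies the Bochner integral $\int_s^{s+h}\tilde G(r)\,dr$ almost everywhere with $\gamma(s+h)-\gamma(s)$ and uses the averaged form of Lebesgue differentiation, whereas you pass through Minkowski's integral inequality and the strong Lebesgue-point property.
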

\begin{proof}
By \cite[Theorem~3.4]{AmCr2014}, there exists a probability measure $\eta \in \mathcal{P}(\mathbb{R}^n \times \Gamma)$ where $\Gamma = C([0,1];\mathbb{R}^n)$, such that $\mu_s = (e_s \circ \pi_{\Gamma})_{\#} \eta$ for all $s \in [0,1]$ and for $\eta$-a.s. $(\mathbf{x}_0,\gamma)$, $\gamma$ is a solution of the ODE $\dot {\mathbf{x}}(s) = v_s(\mathbf{x}(s))$ with $\gamma(0)=\mathbf{x}_0$, where the ODE is understood as its integral form. Here, $e_s:\Gamma \to \mathbb{R}^n$ is evaluation at $s\in[0,1]$, and $\pi_\Gamma: \mathbb{R}^n \times \Gamma \to \Gamma$ is the projection map. Note that both maps are measurable and also that $\Gamma$ is a Polish metric space.

Let us define an auxiliary function $G_s(\mathbf{x}_0,\gamma)=G(s,\mathbf{x}_0,\gamma) \coloneqq  v(s,\gamma(s))$. Since the map $(s,\gamma) \mapsto \gamma(s)$ is (jointly) continuous, $G$ is measurable. Moreover, for all $s \in [0,1]$,
$\|G_s\|^2_{L^2(\eta)} = \int |v(s,\gamma(s)|^2 d  \eta (\mathbf{x},\gamma) = \int | v(s, \mathbf{y})|^2 d\mu_s (\mathbf{y}) = \|v_s\|^2_{L^2(\mu_s)}$.
Therefore, $s \mapsto \|G_s\|_{L^2(\eta)}$ is in $L^1(ds)$, and therefore is finite almost everywhere.

We also define the map $\tilde G: \mathbb{R} \to L^2(\eta)$ by $\tilde G(s)=G_s$ whenever $\|G_s\|_{L^2(\eta)}<\infty$ and otherwise $\tilde G(s)\coloneqq0$. Now we prove that $\tilde G$ is strongly $ds$-measurable. First, we note that $L^2(\eta) \coloneqq L^2(\mathbb{R}^n \times \Gamma, \mathbb{R}^n ; \eta)$ is separable; this follows from the second-countability of $\mathbb{R}^n \times \Gamma$: see \cite[Proposition 3.4.5]{Co2013}. Therefore, it is enough to prove weak $ds$-measurability by \cite[Theorem 1.1.20]{Hyva2016}; this follows from the identification $L^2(\eta)^* \simeq L^2(\eta)$.

As we have $\int_{\mathbb{R}} \| \tilde G(s) \| ds = \int_0^1 \|G_s\|_{L^2(\eta)} <\infty$, $\tilde G$ is Bochner integrable. Therefore, by the Lebesgue differentiation theorem for Banach-valued functions~\cite[Corollary 2.3.5]{Hyva2016}, we have
\begin{equation}
    \left\| \frac 1 h \int_s^{s+h} \tilde G(r) d r - \tilde G (s) \right\|_{L^2(\eta)} \to 0 \text{ as }h \to 0+ 
\end{equation}
for a.e. $s \in [0,1)$. Fix such $s$. We now prove~\eqref{eq:euler_type_approx} for such $s$. Note that we may assume $\tilde G(s) = G_s$.

Fix any $h>0$ such that $s+h \in [0,1]$. Consider the following (measurable) map from $\mathbb{R}^n \times \Gamma$ to $\mathbb{R}^n \times \mathbb{R}^n$: $(\mathbf{x}_0, \gamma) \mapsto \big(\gamma(s+h), \gamma(s)+h v(s,\gamma(s)) \big)$. The pushforward of $\eta$ with respect to this map is a valid transport plan from $\mu_{s+h}$ to $(\mathrm{Id}+hv_s)_{\#}\mu_s$.
If we define $B_h \coloneqq \int_s^{s+h} \tilde G(r)dr \in L^2(\eta)$, then by applying \cite[Proposition 1.2.25]{Hyva2016} with $X=\mathbb{R}^n$ we have:
\begin{equation}
    B_h(\mathbf{x}_0,\gamma) = \int_s^{s+h} G(r,\mathbf{x}_0,\gamma) dr = \int_s^{s+h} v_r(\gamma(r))dr  = \Delta \gamma
\end{equation}
for $\eta$-a.s. $(\mathbf{x}_0,\gamma)$, where $\Delta \gamma \coloneqq \gamma(s+h)-\gamma(s)$.
Now, we have 
\begin{equation}
\begin{split}
    W_2^2 \big( \mu_{s+h},\,\, &(\mathrm{Id}+h v_s)_{\#}\mu_s\big)
    \le \int \left| \Delta \gamma -h v_s(\gamma(s)) \right|^2 d\eta \\
    &=\int \left| B_h(\mathbf{x}_0,\gamma)-h G_s(\mathbf{x}_0,\gamma)\right|^2 d \eta \\&= h^2 \left\|\frac 1 h \int_s^{s+h}\tilde G(r)dr - \tilde G(s)\right\|_{L^2(\eta)}^2.
\end{split}
\end{equation}
This completes the proof of the first part of the lemma. The second part follows immediately from the definition of $\operatorname{Tan}_{\mu_s} (\mathcal{C})$ because we have $\mu_{s+h} \in \mathcal{C}$.
\end{proof}

The following lemma relates the curve-wise weighted horizontal space $\mathcal{H}_{\mu_\cdot}(t)$ to the pointwise weighted horizontal tangent cones $\operatorname{HorTan}_{\mu_s}(t,\mathcal{C})$, $0 \le s \le 1$.
\begin{lemma}
\label{lemma:horizontal_slicing}
Let $t\ge0$, and let $(\mu_s,v_s)_{s\in[0,1]}$ be an admissible pair with $\mu_s\in\mathcal C$ for every $s\in[0,1]$. If $v\in\mathcal H_{\mu_\cdot}(t)$, then
\begin{equation*}
    v_s\in \operatorname{HorTan}_{\mu_s}(t,\mathcal C) \text{ for a.e. }s\in[0,1].
\end{equation*}
\end{lemma}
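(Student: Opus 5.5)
The tangency part is already available: by the second assertion of Lemma~\ref{lemma:euler_approx}, since $\mu_s\in\mathcal C$ for every $s$, we have $v_s\in\operatorname{Tan}_{\mu_s}(\mathcal C)$ for a.e. $s$. Hence the content of the lemma is the slicing statement: if $v\in\mathcal H_{\mu_\cdot}(t)$, then $v_s\in\mathcal H_{\mu_s}(t)$ for a.e. $s$. By the weighted HHD for measures, $\mathcal H_{\mu_s}(t)=\mathcal V_{\mu_s}(t)^\perp$. So we must show that for a.e. $s$ the slice $v_s$ lies in $L^2_{\mathbf M_t}(\mu_s)$ and is $\mathbf M_t$-orthogonal to every $w\in L^2_{\mathbf M_t}(\mu_s)$ satisfying $\int\nabla\psi\cdot w\,d\mu_s=0$ for all $\psi\in C_c^\infty(\mathbb R^n)$. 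Square integrability of the slices for a.e. $s$ follows from $v\in L^2_{\mathbf M_t}(\boldsymbol\mu)$ and Fubini.

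I would argue directly from the definition of $\mathcal H_{\mu_\cdot}(t)$ as a closure. Take $\psi^k\in C_c^\infty((0,1)\times\mathbb R^n)$ with $h^k\coloneqq\mathbf M_t^{-1}\nabla_{\mathbf x}\psi^k\to v$ in $L^2_{\mathbf M_t}(\boldsymbol\mu)$. Then
\[
\int_0^1\|h^k_s-v_s\|^2_{L^2_{\mathbf M_t}(\mu_s)}ds\to0,
\]
so after passing to a subsequence, $\|h^k_s-v_s\|_{L^2_{\mathbf M_t}(\mu_s)}\to0$ for a.e. $s$. This is the standard fact that $L^1$ convergence gives a.e. convergence along a subsequence, applied to the nonnegative functions $s\mapsto\|h^k_s-v_s\|^2$. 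For each fixed $s$, the slice $h^k_s=\mathbf M_t^{-1}\nabla_{\mathbf x}\psi^k(s,\cdot)$ is of the form $\mathbf M_t^{-1}\nabla\phi$ with $\phi=\psi^k(s,\cdot)\in C_c^\infty(\mathbb R^n)$, so it belongs to $\mathcal H_{\mu_s}(t)$. Since $\mathcal H_{\mu_s}(t)$ is closed, the limit satisfies $v_s\in\mathcal H_{\mu_s}(t)$ for a.e. $s$. Intersecting with the full-measure set from Lemma~\ref{lemma:euler_approx} gives the claim.

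The main obstacle is measure-theoretic bookkeeping rather than anything conceptual. One point is that $v$ is an element of $L^2_{\mathbf M_t}(\boldsymbol\mu)$ defined only $\boldsymbol\mu$-a.e., while the admissible pair uses a fixed Borel representative. Changing the representative on a $\boldsymbol\mu$-null set alters $v_s$ only on a $\mu_s$-null set for a.e. $s$, by the disintegration remark, so the slices are well defined as elements of $L^2_{\mathbf M_t}(\mu_s)$ for a.e. $s$. Another point is measurability of $s\mapsto\|h^k_s-v_s\|^2_{L^2_{\mathbf M_t}(\mu_s)}$, which follows from the remark after Lemma~\ref{lemma:vertical_horizontal_decomp}. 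Finally, the countably many exceptional null sets, one for each of the subsequence, the slicing, and the tangency statements, are combined into a single null set.
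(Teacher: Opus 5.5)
Your proposal is correct and follows essentially the same route as the paper: you approximate $v$ by $\mathbf M_t^{-1}\nabla_{\mathbf x}\psi^k$, pass to a subsequence along which the slice errors vanish for a.e.\ $s$, use that each slice lies in $\mathcal H_{\mu_s}(t)$ together with closedness, and take tangency from Lemma~\ref{lemma:euler_approx}. The HHD orthogonality characterization you mention at the start is never actually used, and your measure-theoretic bookkeeping is a harmless elaboration of steps the paper leaves implicit.
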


\begin{proof}
We prove the two inclusions in the definition of $\operatorname{HorTan}_{\mu_s}(t,\mathcal C)$. Since $v\in\mathcal H_{\mu_\cdot}(t)$, we have $v \in L^2_{\mathbf{M}_t}(\boldsymbol{\mu})$, and there exist functions $\psi_k\in C_c^\infty((0,1)\times\mathbb R^n)$ such that $h_k(s,\mathbf{x}) \coloneqq \mathbf M_t(\mathbf{x})^{-1}\nabla_{\mathbf{x}}\psi_k(s,\mathbf{x})$ satisfies
\begin{equation*}
\int_0^1\!\!\int_{\mathbb R^n} (v_s-h_{k,s})^\top\mathbf M_t(v_s-h_{k,s}) d\mu_sds \to 0.
\end{equation*}
The inner integrals, viewed as functions of $s$, converge to zero in $L^1(ds)$. Therefore, after passing to a subsequence, they converge to zero $s$-a.e. For such $s$, we have $h_{k}(s,\cdot)\in\{\mathbf M_t^{-1}\nabla\phi:\phi\in C_c^\infty(\mathbb R^n)\}$ and therefore the preceding convergence gives $v_s\in\mathcal H_{\mu_s}(t)$.

It remains to show $v_s\in\operatorname{Tan}_{\mu_s}(\mathcal C)$ for a.e. $s\in[0,1]$. This was achieved by Lemma~\ref{lemma:euler_approx}.
\end{proof}

\begin{theorem}[IWCC, pointwise version]
\label{thm:pointwise_iwcc}
Suppose that for every $t \ge 0$ and $\mu\in\mathcal C$, we have the following (pointwise) IWCC at $(t,\mu)$:
\begin{equation}
\label{eq:pointwise_iwcc}
    \int_{\mathbb R^n} \! v(\mathbf{x})^\top B_\lambda(t,\mathbf{x})v(\mathbf{x}) d\mu(\mathbf{x}) \le0, \text{ }\forall v\in\operatorname{HorTan}_\mu(t,\mathcal C).
\end{equation}

Then, the $s$-integrated IWCC~\eqref{eq:s_integrated_iwcc} is satisfied. In particular, in Theorem~\ref{thm:wasserstein_contraction}, we can replace the condition \eqref{eq:s_integrated_iwcc} by \eqref{eq:pointwise_iwcc}.
\end{theorem}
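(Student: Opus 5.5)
The plan is a slicing argument that reduces the curve-wise condition to the pointwise one via Lemma~\ref{lemma:horizontal_slicing}. Fix $t\ge0$ and an admissible pair $(\mu_s,v_s)_{s\in[0,1]}$ with $\mu_\cdot\subseteq\mathcal C$ and $v\in\mathcal H_{\mu_\cdot}(t)$. We must show that the double integral in \eqref{eq:s_integrated_iwcc} is nonpositive. By Lemma~\ref{lemma:horizontal_slicing}, there is a Lebesgue-null set $N\subseteq[0,1]$ such that $v_s\in\operatorname{HorTan}_{\mu_s}(t,\mathcal C)$ for every $s\notin N$. Since $\mu_s\in\mathcal C$, the pointwise IWCC \eqref{eq:pointwise_iwcc} at $(t,\mu_s)$ then gives
\[
I(s)\coloneqq\int_{\mathbb R^n} v_s(\mathbf{x})^\top B_\lambda(t,\mathbf{x})v_s(\mathbf{x})\,d\mu_s(\mathbf{x})\le 0
\]
for every $s\notin N$.

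Next we integrate in $s$, which requires checking measurability and well-definedness. The function $(s,\mathbf{x})\mapsto v_s(\mathbf{x})^\top B_\lambda(t,\mathbf{x})v_s(\mathbf{x})$ is Borel, so by the remark following Lemma~\ref{lemma:vertical_horizontal_decomp}, $s\mapsto I(s)$ is Borel wherever it is defined. Moreover, the double integral equals the $\boldsymbol\mu$-integral whenever the latter is well-defined.

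To see that it is well-defined, we bound the positive part. Since $\mu_s\in\mathcal C$, each $\mu_s$ is supported in $\Omega_{\mathcal C}$, and the finiteness of $\kappa_T$ in \eqref{eq:kappaT_def} (taking any $T\ge t$) gives
\[
v^\top B_\lambda v\le \kappa_T\, v^\top\mathbf M_t v \quad\text{on }\Omega_{\mathcal C}.
\]
Because $v\in L^2_{\mathbf M_t}(\boldsymbol\mu)$, the positive part of the integrand is $\boldsymbol\mu$-integrable. Hence the $\boldsymbol\mu$-integral exists in $[-\infty,\infty)$ and equals $\int_0^1 I(s)\,ds\le 0$, since $I\le 0$ off a null set. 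This is exactly \eqref{eq:s_integrated_iwcc}. The final sentence of the theorem is then immediate, because Theorem~\ref{thm:wasserstein_contraction} only uses \eqref{eq:s_integrated_iwcc}.

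The main obstacle is the integrability bookkeeping just described, not the geometry. Two points need care here. First, the pointwise hypothesis is applied at $\mu_s$ for a.e.\ $s$, and $I(s)$ may a priori be $-\infty$ or only defined almost everywhere. Second, $B_\lambda$ is not assumed bounded below, so I rely only on the upper bound supplied by $\kappa_T$ to make the integral meaningful.

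In the setting where this theorem is actually used, the curve comes from the proof of Theorem~\ref{thm:wasserstein_contraction} with $A(\tau)<\infty$, so $v\in L^2_{\mathbf M_t}(\boldsymbol\mu)$ holds automatically. All the substantive analysis (the superposition principle and the Lebesgue differentiation step) is already contained in Lemmas~\ref{lemma:euler_approx} and~\ref{lemma:horizontal_slicing}.
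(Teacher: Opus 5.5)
Your proof is correct and is essentially the paper's argument: you slice with Lemma~\ref{lemma:horizontal_slicing} to get $v_s\in\operatorname{HorTan}_{\mu_s}(t,\mathcal C)$ for a.e.\ $s$, apply the pointwise IWCC at $(t,\mu_s)$, and integrate in $s$. Your extra bookkeeping is a sound refinement that the paper leaves implicit. It bounds the positive part of $v^\top B_\lambda v$ by $\max\{\kappa_T,0\}\,v^\top\mathbf M_t v$ on $\Omega_{\mathcal C}$, using the standing hypotheses of Theorem~\ref{thm:wasserstein_contraction}, so that the $\boldsymbol\mu$-integral is well defined.
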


\begin{proof}
Let $t \ge 0$ and let $(\mu_s,v_s)_{s\in[0,1]}$ be an admissible pair with $\mu_\cdot\subseteq\mathcal C$ and $v\in\mathcal H_{\mu_\cdot}(t)$.
By Lemma~\ref{lemma:horizontal_slicing}, $v_s\in\operatorname{HorTan}_{\mu_s}(t,\mathcal C)$ for a.e. $s\in[0,1]$. Applying \eqref{eq:pointwise_iwcc} to $(\mu_s,v_s)$ and integrating in $s$ gives \eqref{eq:s_integrated_iwcc}.
\end{proof}

\begin{theorem}[Recovery of the classical contraction LMI]
For every $t \ge 0$ and $\mathbf{x}\in\mathbb R^n$, we have $L^2_{\mathbf M_t}(\delta_{\mathbf x}) \simeq \mathbb R^n$, $\mathcal V_{\delta_{\mathbf x}}(t)=\{0\}$, and $\mathcal H_{\delta_{\mathbf x}}(t)=\mathbb R^n$. If $U \subseteq \mathbb{R}^n$ is an open set and $\mathcal{D} \coloneqq \{ \delta_\mathbf{y}:\mathbf{y} \in U\}$, then for all $\mathbf{x}\in U$,
\begin{equation}
\operatorname{HorTan}_{\delta_{\mathbf x}}(t,\mathcal D) = \mathbb R^n.
\end{equation}
Consequently, the pointwise IWCC on $\mathcal D$ is equivalent to
\begin{equation*}
B_\lambda(t,\mathbf x)\preceq0 \text{ }\text{ for all }t\ge 0,\, \mathbf x\in U.
\end{equation*}
\end{theorem}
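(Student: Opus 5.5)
The proof has three parts: the structure of $L^2_{\mathbf M_t}(\delta_{\mathbf x})$, the identification of $\operatorname{HorTan}$, and the translation of the IWCC into the LMI.

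\textbf{Step 1: the fiber and its splitting.} A Borel field $v$ modulo $\delta_{\mathbf x}$-a.s. equality is determined by the single vector $v(\mathbf x)$. The norm is $\|v\|^2 = v(\mathbf x)^\top \mathbf M_t(\mathbf x) v(\mathbf x)$, which is finite for every $v$. So $v\mapsto v(\mathbf x)$ is a linear isomorphism $L^2_{\mathbf M_t}(\delta_{\mathbf x})\simeq\mathbb R^n$, isometric for the inner product induced by $\mathbf M_t(\mathbf x)$.

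For the horizontal space, take any $\xi\in\mathbb R^n$. Set $\eta=\mathbf M_t(\mathbf x)\xi$ and choose $\psi\in C_c^\infty(\mathbb R^n)$ with $\psi(\mathbf y)=\eta\cdot\mathbf y$ near $\mathbf x$ (a cutoff times a linear function). Then $\mathbf M_t^{-1}\nabla\psi$ evaluated at $\mathbf x$ equals $\xi$. Hence the generating set of $\mathcal H_{\delta_{\mathbf x}}(t)$ is already all of $\mathbb R^n$, and $\mathcal H_{\delta_{\mathbf x}}(t)=\mathbb R^n$. By the weighted HHD lemma for measures, $\mathcal V_{\delta_{\mathbf x}}(t)=\mathcal H^\perp=\{0\}$. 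One can also see this directly: the same $\psi$ gives $\nabla\psi(\mathbf x)=\eta$ for an arbitrary $\eta$, so $w\cdot\eta=0$ for all $\eta$ forces $w=0$.

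\textbf{Step 2: the contingent cone.} Since $\operatorname{HorTan}=\mathcal H\cap\operatorname{Tan}$ and $\mathcal H=\mathbb R^n$, it suffices to show $\operatorname{Tan}_{\delta_{\mathbf x}}(\mathcal D)=\mathbb R^n$. Here $L^2(\delta_{\mathbf x})\simeq\mathbb R^n$, and $(\mathrm{Id}+hv)_\#\delta_{\mathbf x}=\delta_{\mathbf x+hv(\mathbf x)}$. Because $U$ is open, $\mathbf x+hv(\mathbf x)\in U$ for all small $h>0$. Then $\delta_{\mathbf x+hv(\mathbf x)}\in\mathcal D$, so $W_2\big((\mathrm{Id}+hv)_\#\delta_{\mathbf x},\mathcal D\big)=0$ for small $h$, and the liminf in the definition is $0$. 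Thus every $v$ lies in $\operatorname{Tan}_{\delta_{\mathbf x}}(\mathcal D)$.

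\textbf{Step 3: equivalence with the LMI.} By Step 2, the pointwise IWCC at $(t,\delta_{\mathbf x})$ reads $\xi^\top B_\lambda(t,\mathbf x)\xi\le 0$ for all $\xi\in\mathbb R^n$, i.e. $B_\lambda(t,\mathbf x)\preceq0$. Every $\mu\in\mathcal D$ is of the form $\delta_{\mathbf x}$ with $\mathbf x\in U$, and conversely every $\mathbf x\in U$ gives such a $\mu$. Ranging over all $t\ge0$ and $\mu\in\mathcal D$ is therefore the same as ranging over all $t\ge0$ and $\mathbf x\in U$, which yields the stated equivalence.

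No step is a genuine obstacle. The only point needing care is Step 1: the closure in the definition of $\mathcal H$ is taken in a finite-dimensional space, and the test functions must be compactly supported, which is why the cutoff is used to produce the prescribed gradient at $\mathbf x$.
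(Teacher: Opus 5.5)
Your proof is correct and follows the same route as the paper: you identify $L^2_{\mathbf M_t}(\delta_{\mathbf x})$ with $\mathbb R^n$ by evaluation at $\mathbf x$, use surjectivity of $\psi\mapsto\mathbf M_t(\mathbf x)^{-1}\nabla\psi(\mathbf x)$ to get $\mathcal H_{\delta_{\mathbf x}}(t)=\mathbb R^n$ and hence $\mathcal V_{\delta_{\mathbf x}}(t)=\{0\}$, and then show that every vector lies in $\operatorname{Tan}_{\delta_{\mathbf x}}(\mathcal D)$. You also supply details the paper leaves implicit: the cutoff construction of $\psi$, and the use of openness of $U$ to get $\delta_{\mathbf x+hv(\mathbf x)}\in\mathcal D$ for small $h>0$.
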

\begin{proof}
The linear isomorphism $L^2_{\mathbf M_t}(\delta_{\mathbf x}) \simeq \mathbb R^n$ is given by the evaluation map $v \mapsto v(\mathbf{x})$. Under this identification, $\mathcal{H}_{\delta_\mathbf{x}}(t) = \mathbb{R}^n$ is clear, because the map $C_c^\infty(\mathbb{R}^n) \ni\psi \mapsto \mathbf{M}_t^{-1}(\mathbf{x}) \nabla \psi(\mathbf{x}) \in \mathbb{R}^n$ is surjective. Hence, $\mathcal{V}_{\delta_{\mathbf x}}(t)=\{0\}$. The claim that $\operatorname{HorTan}_{\delta_{\mathbf x}}(t,\mathcal D) = \mathbb R^n$ follows because every $v \in \mathbb{R}^n$ is in the Wasserstein contingent cone at $\delta_{\mathbf{x}}$. The final claim immediately follows from the definition of the pointwise IWCC.
\end{proof}

\subsection{Analysis of the Motivating Example}
In this subsection, we return to the rotationally symmetric shear example from Section~\ref{subsec:motivating_example}. The goal is twofold.
First, we use this example to argue that the relevant tangent object is precisely $\operatorname{HorTan}_\mu(t,\mathcal C)$, because neither the Wasserstein contingent cone $\operatorname{Tan}_\mu(\mathcal C)$ nor the weighted horizontal space $\mathcal H_\mu(t)$ alone gives the correct test set. This point is illustrated in Claims~\ref{claim:hortan_is_good_def_1} and \ref{claim:hortan_is_good_def_2}.
Second, we prove a symmetry reduction principle showing that, for a metric $\mathbf{M}_t$ preserved under pullbacks by a $G$-action ($G$ being a compact Hausdorff group) and a class $\mathcal{C}\subseteq \mathcal{P}_2$ of $G$-invariant measures, the horizontal tangent cone can be computed using gradients of $G$-invariant scalar potentials. This leads to a family of examples that conceptually generalize the original example of the rotationally symmetric shear: see Claim~\ref{claim:example_compactgroup}.

We take $\mathbf{M}\equiv I$. We use the notations $\mathbf{x} = \left(r \cos(\theta), r \sin(\theta)\right)$, $r \ge 0$ and \begin{equation*}J = \begin{bmatrix}
    0 & -1\\1&0
\end{bmatrix}.\end{equation*}

Let $\sigma \coloneqq \operatorname{Unif}(S^1)$ be the unique rotation-invariant Borel probability measure on the unit circle $S^1$. Clearly, $\sigma \in \mathcal{C}_A$. On $A$, we have $f(\mathbf{x})= - \alpha (r-1) \frac{\mathbf{x}}{r} + (1+ \beta(r-1))J\mathbf{x}$. Therefore, by differentiation, we get \begin{equation*}Df(\mathbf{x})=\begin{bmatrix} -\alpha x_1^2 - \beta x_1 x_2 & -\alpha x_1 x_2 - \beta x_2^2 -1 \\ -\alpha x_1x_2 + \beta x_1^2 +1 & -\alpha x_2^2 +\beta x_1 x_2
\end{bmatrix}
\end{equation*}
for all $\mathbf{x} \in S^1 \subseteq A$. The claim below shows that the correct test space is neither $\operatorname{Tan}_\sigma(\mathcal{C}_A)$ nor $\mathcal{H}_\sigma(t)$ alone, justifying our definition of $\operatorname{HorTan}$~\eqref{eq:def_hortan}.
\begin{claim} \label{claim:hortan_is_good_def_1} Let $\mathbf{M}\equiv I$, and let $\lambda>0$ be given. Consider the vector fields $v_1 (\mathbf{x}) \coloneqq J\mathbf{x} $, $v_2(\mathbf{x}) \coloneqq  x_2 J \mathbf{x}$ in $L^2(\sigma)$. Then, for all $t \ge 0$, we have $v_1 \in \operatorname{Tan}_\sigma(\mathcal{C}_A)\setminus\mathcal{H}_\sigma(t)$ and $v_2 \in \mathcal{H}_\sigma(t) \setminus  \operatorname{Tan}_\sigma(\mathcal{C}_A)$. Moreover, neither $v_1$ nor $v_2$ satisfies the (pointwise) IWCC~\eqref{eq:pointwise_iwcc} at $(t,\sigma)$.
\end{claim}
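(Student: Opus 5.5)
The plan is to verify the four membership/non-membership statements separately and then compute the quadratic form. Throughout I work on $S^1$ in the moving frame $e_r=\mathbf{x}$, $\tau=J\mathbf{x}$. There, $v_1=\tau$ and $v_2=\sin\theta\,\tau$.

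\emph{$v_1\in\operatorname{Tan}_\sigma(\mathcal C_A)$.} The map $\mathrm{Id}+hJ$ equals $\sqrt{1+h^2}$ times a rotation. Hence $(\mathrm{Id}+hJ)_\#\sigma$ is the uniform law on the circle of radius $\sqrt{1+h^2}$. For small $h$ this measure lies in $\mathcal C_A$, so its $W_2$-distance to $\mathcal C_A$ is $0$.

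\emph{$v_1\notin\mathcal H_\sigma(t)$.} For $\psi\in C_c^\infty(\mathbb R^2)$ we have $\int\nabla\psi\cdot J\mathbf{x}\,d\sigma=\frac1{2\pi}\int_0^{2\pi}\partial_\theta\psi(\cos\theta,\sin\theta)\,d\theta=0$. So $v_1\in\mathcal V_\sigma(t)=\mathcal H_\sigma(t)^\perp$ by the weighted HHD lemma for measures. Since $\|v_1\|_{L^2(\sigma)}=1\neq0$, it follows that $v_1\notin\mathcal H_\sigma(t)$.

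\emph{$v_2\in\mathcal H_\sigma(t)$.} I exhibit an exact gradient. Take $\psi(\mathbf{x})=-\phi(|\mathbf{x}|)\cos\theta$, where $\phi\in C_c^\infty((1/2,2))$ with $\phi(1)=1$ and $\phi'(1)=0$. This $\psi$ is smooth and compactly supported. On $S^1$ its radial derivative vanishes, and its tangential derivative is $\partial_\theta\psi=\sin\theta$. Therefore $\nabla\psi=v_2$ $\sigma$-a.e. (with $\mathbf M\equiv I$).

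\emph{$v_2\notin\operatorname{Tan}_\sigma(\mathcal C_A)$.} This is the main obstacle, since it requires a lower bound on $W_2((\mathrm{Id}+hv_2)_\#\sigma,\mathcal C_A)$ that is linear in $h$. I would use $W_2\ge W_1$ together with Kantorovich--Rubinstein duality, with a Lipschitz test function that averages to zero against every rotation-invariant law. Take $\varphi(\mathbf{x})=g(|\mathbf{x}|)\cos\theta$ with $g\in C_c^\infty((1/4,4))$ and $g=1$ near $r=1$. This $\varphi$ is globally Lipschitz, say with constant $L$, and $\int\varphi\,d\nu=0$ for every rotation-invariant $\nu$, in particular every $\nu\in\mathcal C_A$. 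Hence
\begin{equation*}
W_2\big((\mathrm{Id}+hv_2)_\#\sigma,\nu\big)\ \ge\ L^{-1}\Big|\int\varphi\circ(\mathrm{Id}+hv_2)\,d\sigma\Big|.
\end{equation*}
A Taylor expansion gives $\int\varphi\circ(\mathrm{Id}+hv_2)\,d\sigma=h\int\nabla\varphi\cdot v_2\,d\sigma+O(h^2)=-\tfrac h2+O(h^2)$, because $\partial_\theta\varphi=-\sin\theta$ on $S^1$. So the $\liminf$ of the distance divided by $h$ is at least $1/(2L)>0$, and $v_2\notin\operatorname{Tan}_\sigma(\mathcal C_A)$.

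\emph{Failure of the IWCC.} With $\mathbf M\equiv I$ we have $B_\lambda=Df+Df^\top+2\lambda I$. From the explicit $Df$ on $S^1$, written in the frame $(e_r,\tau)$, one checks that $\tau^\top Df\,\tau=0$. This component comes from the rigid rotation and vanishes because $r=1$ there; the shear and contraction terms affect only the $e_r$-row and $e_r$-column, equal to $-\alpha$ and $\beta$ respectively. Therefore $\tau^\top B_\lambda\tau=2\lambda$. This yields
\begin{equation*}
\int v_1^\top B_\lambda v_1\,d\sigma=2\lambda>0,\qquad \int v_2^\top B_\lambda v_2\,d\sigma=2\lambda\int\sin^2\theta\,d\sigma=\lambda>0.
\end{equation*}
Thus neither $v_1$ nor $v_2$ satisfies the inequality in \eqref{eq:pointwise_iwcc} at $(t,\sigma)$. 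The only genuine computation is the identity $\tau^\top Df\,\tau=0$, which I would verify directly from the displayed matrix using $x_1^2+x_2^2=1$.
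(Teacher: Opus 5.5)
Your proposal is correct and follows the paper's proof essentially step for step. The matching steps are: the rotation-commuting map $\mathrm{Id}+hJ$ for $v_1\in\operatorname{Tan}_\sigma(\mathcal C_A)$; rotation invariance of $\sigma$ to place $v_1$ in $\mathcal V_\sigma(t)=\mathcal H_\sigma(t)^\perp$; a potential equal to $-\cos\theta$ near $S^1$ for $v_2\in\mathcal H_\sigma(t)$; a Kantorovich--Rubinstein lower bound with a rotationally odd test function for $v_2\notin\operatorname{Tan}_\sigma(\mathcal C_A)$; and $Df(\mathbf{x})J\mathbf{x}=-\mathbf{x}$ on $S^1$ for the quadratic forms. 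The only difference is that the paper uses the globally $1$-Lipschitz test function $x_1$ itself, which gives the exact value $\int x_1\,d\big((\mathrm{Id}+hv_2)_\#\sigma\big)=-h/2$ and so avoids your cutoff $g$ and the Taylor remainder.
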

\begin{proof} Fix any $t \ge 0$.
We first consider $v_1 = (\mathbf{x} \mapsto J \mathbf{x}) \in L^2(\sigma)$.
Since $JR = RJ$ for all $R \in \mathrm{SO}(2)$, for each $h>0$ we have $(\mathrm{Id}+hJ)_\# \sigma = (\mathrm{Id}+hJ)_\# R_\# \sigma = R_\# (\mathrm{Id}+hJ)_\# \sigma$ so $(I+hJ)_\# \sigma$ is rotation-invariant. Moreover, $\left\| (\mathrm{Id}+hJ)\mathbf{x}\right\| = \sqrt{1+h^2} \|\mathbf{x}\|$, so $(\mathrm{Id}+hJ)_\# \sigma$ is concentrated on $\sqrt{1+h^2}S^1$, hence is concentrated on $A$ for all small $h>0$. Therefore, $v_1 \in \operatorname{Tan}_\sigma(\mathcal{C}_A)$. Now we prove $v_1 \in \mathcal{V}_\sigma(t)$; since $v_1 \neq0$ in $L^2(\sigma)$, we conclude that $v_1 \notin \mathcal{H}_\sigma(t)$. For each $\psi \in C_c^\infty(\mathbb{R}^2)$, we have $\int \psi (e^{sJ} \mathbf{x}) d\sigma(\mathbf{x}) = \int \psi (\mathbf{x}) d \sigma (\mathbf{x})$ for all $s \in \mathbb{R}$. Differentiating at $s=0$, we obtain $\int_{\mathbb{R}^2} \nabla \psi \cdot v_1 d \sigma = 0$. Therefore, $v_1 \in \operatorname{Tan}_\sigma(\mathcal{C}_A)\setminus\mathcal{H}_\sigma(t)$. We now consider $v_2$. Consider a $\psi \in C_c^\infty(\mathbb{R}^2)$ such that $\psi(\mathbf{x}) = -x_1/|\mathbf{x}|$ on an open neighborhood of $S^1$. On $S^1$, we have $\nabla \psi (\mathbf{x}) = (x_1^2 -1,x_1x_2)=(-x_2^2,x_1x_2)=v_2(\mathbf{x})$. Therefore, $\nabla \psi = v_2 \in L^2(\sigma)$ and hence $v_2 \in \mathcal{H}_\sigma(t)$. Now we prove $v_2 \notin  \operatorname{Tan}_\sigma(\mathcal{C}_A)$. We have for all $h>0$
\begin{equation*}
\begin{split}
    \int_{\mathbb{R}^2} x_1  d\big((\mathrm{Id}+hv_2)_\# \sigma\big) (\mathbf{x})  &= \int_{\mathbb{R}^2} (x_1 -hx_2^2) d \sigma(\mathbf{x}) \\&= -h \int x_2^2 d \sigma(\mathbf{x})=-\frac h 2,
    \end{split}
\end{equation*}
whereas $\int_{\mathbb{R}^2} x_1 d \mu(\mathbf{x}) =0$ for all $\mu \in \mathcal{C}_A$.
As $\mathbf{x}\mapsto x_1$ is 1-Lipschitz, by the Kantorovich--Rubinstein theorem \cite[Theorem~1.14]{Vi2003}, $W_2\big((\mathrm{Id}+hv_2)_\# \sigma, \mathcal{C}_A \big) \ge W_1\big((\mathrm{Id}+hv_2)_\# \sigma, \mathcal{C}_A \big) \ge h/2$.
Therefore, $v_2 \in \mathcal{H}_\sigma(t) \setminus \operatorname{Tan}_\sigma(\mathcal{C}_A)$.

Now, we test the IWCC~\eqref{eq:pointwise_iwcc} for $v_1$ and $v_2$. Note that $Df(\mathbf{x})J\mathbf{x}=-\mathbf{x}$ on $S^1$, and hence $(J \mathbf{x})^\top B_\lambda(t,\mathbf{x}) (J\mathbf{x}) = 2 \lambda \|\mathbf{x}\|^2=2\lambda$ on $S^1$. Therefore, $\int_{\mathbb{R}^2} v_1^\top B_\lambda v_1 d \sigma  = \int_{\mathbb{R}^2} 2\lambda  d\sigma = 2\lambda >0$ and $\int_{\mathbb{R}^2} v_2^\top B_\lambda v_2 d \sigma  =    \int_{\mathbb{R}^2}2 \lambda x_2^2  d\sigma  = \lambda >0$, so \eqref{eq:pointwise_iwcc} is not satisfied.
\end{proof}

Now, we prove that the IWCC~\eqref{eq:pointwise_iwcc} is satisfied for all $t \ge 0$ and $\mu \in \mathcal{C}_A$. It is interesting to first slightly generalize the example.

\begin{proposition}
\label{prop:symmetry_reduction_hortan}
Let $G$ be a compact Hausdorff group, and let $(g, \mathbf{x}) \mapsto g \cdot \mathbf{x}$ be a group action on $\mathbb{R}^n$. Assume that each $\Theta_g \coloneqq (\mathbf{x} \mapsto g \cdot \mathbf{x})$ is $C^\infty$, and also assume that for each multi-index $\alpha \ge 0$, the map $(g,\mathbf{x})\mapsto D_{\mathbf{x}}^\alpha \Theta_g(\mathbf{x})$ is continuous. Fix $t \ge 0$. Let $\mathcal{C} \subseteq \mathcal{P}_2(\mathbb{R}^n)$ satisfy $(\Theta_g)_\# \mu = \mu$ for all $\mu \in \mathcal{C}$ and $g \in G$.

Assume $D\Theta_g(\mathbf{x})^\top\mathbf{M}_t(g \cdot \mathbf{x})D\Theta_g(\mathbf{x})=\mathbf{M}_t(\mathbf{x})$ for all $g \in G$ and $\mathbf{x} \in \bigcup_{\mu \in \mathcal{C}} \operatorname{supp}(\mu)$, and define
\begin{equation}
    \mathcal H_{\mu,G}(t) \coloneqq \overline{\{\mathbf M_t^{-1}\nabla_{\mathbf{x}}\psi\}}^{L^2_{\mathbf M_t}(\mu)}\subseteq \mathcal{H}_\mu (t),
\end{equation}
where $\psi$ ranges over $C_c^\infty(\mathbb R^n)$ functions such that for all $g \in G$ and $\mathbf{x} \in \mathbb{R}^n$, $\psi(g \cdot \mathbf{x})=\psi(\mathbf{x})$.
Then, we have
\begin{equation}
  \operatorname{HorTan}_{\mu}(t,\mathcal{C}) = \mathcal{H}_{\mu,G}(t) \cap \operatorname{Tan}_\mu (\mathcal{C})
\end{equation}
for all $\mu \in \mathcal{C}$.
\end{proposition}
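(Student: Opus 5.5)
The inclusion $\supseteq$ is immediate: $\mathcal H_{\mu,G}(t)\subseteq\mathcal H_\mu(t)$, so $\mathcal H_{\mu,G}(t)\cap\operatorname{Tan}_\mu(\mathcal C)\subseteq\operatorname{HorTan}_\mu(t,\mathcal C)$. The work is in $\subseteq$. Fix $v\in\mathcal H_\mu(t)\cap\operatorname{Tan}_\mu(\mathcal C)$; we must show $v\in\mathcal H_{\mu,G}(t)$. The plan is to construct a $G$-averaging projection on $L^2_{\mathbf M_t}(\mu)$ whose image of $\mathcal H_\mu(t)$ lies in $\mathcal H_{\mu,G}(t)$. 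We then use the tangency of $v$ to show that $v$ is fixed by this projection, modulo a vertical field that is forced to vanish.

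\emph{Step 1 (averaging).} Let $dg$ be normalized Haar measure on $G$. For $\psi\in C_c^\infty(\mathbb R^n)$ set $\bar\psi(\mathbf x)\coloneqq\int_G\psi(g\cdot\mathbf x)\,dg$. By the continuity of $(g,\mathbf x)\mapsto D^\alpha_{\mathbf x}\Theta_g(\mathbf x)$ and compactness of $G$, we can differentiate under the integral, so $\bar\psi$ is $C^\infty$. Its support lies in $G\cdot\operatorname{supp}\psi$, which is compact as the continuous image of $G\times\operatorname{supp}\psi$. Finally $\bar\psi$ is $G$-invariant. On $L^2_{\mathbf M_t}(\mu)$ define
\[
(Pw)(\mathbf x)\coloneqq\int_G D\Theta_g(\mathbf x)^{-1}w(g\cdot\mathbf x)\,dg .
\]
Here $D\Theta_g$ is invertible because $\Theta_{g^{-1}}$ inverts $\Theta_g$. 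The metric identity $D\Theta_g^\top\mathbf M_t(g\cdot\mathbf x)D\Theta_g=\mathbf M_t(\mathbf x)$ on the supports, together with $(\Theta_g)_\#\mu=\mu$, shows that each $w\mapsto D\Theta_g^{-1}w\circ\Theta_g$ is a unitary representation of $G$ on $L^2_{\mathbf M_t}(\mu)$. Hence $P$ is the orthogonal projection onto the invariant vectors; in particular $P$ is self-adjoint, and we only need it as a bounded self-adjoint operator. The chain rule $\nabla(\psi\circ\Theta_g)=D\Theta_g^\top\nabla\psi\circ\Theta_g$, combined with the metric identity in the form $\mathbf M_t(\mathbf x)^{-1}D\Theta_g(\mathbf x)^\top=D\Theta_g(\mathbf x)^{-1}\mathbf M_t(g\cdot\mathbf x)^{-1}$, gives
\[
P(\mathbf M_t^{-1}\nabla\psi)=\mathbf M_t^{-1}\nabla\bar\psi \quad\mu\text{-a.e.}
\]
By continuity of $P$ we conclude $P\mathcal H_\mu(t)\subseteq\mathcal H_{\mu,G}(t)$, and also $P\mathcal H_\mu(t)\subseteq\mathcal H_\mu(t)$.

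\emph{Step 2 (tangency forces invariance against test gradients).} This is the main obstacle: we must show $\int\nabla\psi\cdot v\,d\mu=\int\nabla\bar\psi\cdot v\,d\mu$ for every $\psi\in C_c^\infty$. Since $v\in\operatorname{Tan}_\mu(\mathcal C)$, there are $h_k\downarrow0$ and $\mu_k\in\mathcal C$ with $W_2\big((\mathrm{Id}+h_kv)_\#\mu,\mu_k\big)=o(h_k)$. Put $\phi\coloneqq\psi-\bar\psi$, which is Lipschitz and $C^2$ with compact support. Every $\nu\in\mathcal C$ is $G$-invariant, so $\int\phi\,d\nu=0$; in particular $\int\phi\,d\mu=0=\int\phi\,d\mu_k$. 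By Kantorovich--Rubinstein and $W_1\le W_2$,
\[
\Big|\int\phi\,d(\mathrm{Id}+h_kv)_\#\mu\Big|\le\operatorname{Lip}(\phi)\,o(h_k).
\]
A Taylor expansion with bounded second derivatives, using $v\in L^2(\mu)$ (which follows from $\mathbf M\succeq\underline m I$ or directly from the definition of the cone), gives
\[
\int\phi(\mathbf x+h_kv)\,d\mu=\int\phi\,d\mu+h_k\int\nabla\phi\cdot v\,d\mu+O(h_k^2).
\]
Dividing by $h_k$ and letting $k\to\infty$ yields $\int\nabla\phi\cdot v\,d\mu=0$.

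\emph{Step 3 (conclusion).} By the self-adjointness of $P$ and Step 1,
\[
\langle Pv,\mathbf M_t^{-1}\nabla\psi\rangle=\langle v,\mathbf M_t^{-1}\nabla\bar\psi\rangle=\int\nabla\bar\psi\cdot v\,d\mu,
\]
which by Step 2 equals $\int\nabla\psi\cdot v\,d\mu=\langle v,\mathbf M_t^{-1}\nabla\psi\rangle$. Hence $v-Pv\in\mathcal H_\mu(t)^\perp=\mathcal V_\mu(t)$. On the other hand $v-Pv\in\mathcal H_\mu(t)$ by Step 1, so $v=Pv\in\mathcal H_{\mu,G}(t)$. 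This proves $\subseteq$. The delicate points are the smoothness and compact support of $\bar\psi$, which use the joint continuity hypotheses, and the fact that the metric invariance is only needed on $\bigcup_{\mu\in\mathcal C}\operatorname{supp}\mu$, since every identity above is tested $\mu$-a.e.
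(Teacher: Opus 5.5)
Your proof is correct, but it closes the argument differently from the paper. The paper first shows that $v$ is fixed by every group element. It tests tangency against $F=\phi\circ\Theta_g-\phi$ separately for each $g$, which gives $g\cdot v-v\in\mathcal{H}_\mu(t)\cap\mathcal{V}_\mu(t)=\{0\}$. Only then does it average an approximating sequence, proving $\|\bar h_k-v\|_{L^2_{\mathbf{M}_t}(\mu)}\le\|h_k-v\|_{L^2_{\mathbf{M}_t}(\mu)}$ by Cauchy--Schwarz, Tonelli, and the invariance of $v$.

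You instead test tangency once, against the averaged function $\psi-\bar\psi$. Up to sign, this is the Haar average of the paper's test functions. You then use self-adjointness of the averaging operator $P$ to get $v-Pv\in\mathcal{H}_\mu(t)\cap\mathcal{V}_\mu(t)$, hence $v=Pv\in P\mathcal{H}_\mu(t)\subseteq\mathcal{H}_{\mu,G}(t)$. This is more structural and dispenses with both the per-element invariance and the approximation estimate. In exchange, the paper's route yields the stronger intermediate fact $g\cdot v=v$ for each $g\in G$.

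The cost of your route is that $P$ must be a bounded self-adjoint operator on all of $L^2_{\mathbf{M}_t}(\mu)$, applied to the merely Borel field $v$; the paper only ever averages the continuous fields $h_k$. That step deserves a line of justification rather than an appeal to ``the orthogonal projection onto invariant vectors.'' Write $\pi(g)w= D\Theta_g^{-1}\,w\circ\Theta_g$.
\begin{itemize}
\item $Pw$ is well defined and $\langle Pw,u\rangle=\int_G\langle\pi(g)w,u\rangle\,dg$ by Tonelli/Fubini on $G\times\mathbb{R}^n$. Measurability is fine because $\mathbb{R}^n$ is second countable, so $\mathcal{B}(G\times\mathbb{R}^n)=\mathcal{B}(G)\otimes\mathcal{B}(\mathbb{R}^n)$.
\item Self-adjointness then follows from $\pi(g)^*=\pi(g^{-1})$ together with the inversion invariance of Haar measure.
\item Strictly speaking, $\pi(g)\pi(h)=\pi(hg)$, so $\pi$ is an anti-representation rather than a representation. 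This is harmless for $P$ and for its self-adjointness.
\end{itemize}
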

\begin{proof}
Denote the normalized Haar measure on $G$ by $dg$. We refer to \cite{RoFi2023} for a discussion of Haar measures. It suffices to prove the nontrivial inclusion. Fix $\mu \in \mathcal{C}$, and let $v \in \operatorname{HorTan}_\mu (t, \mathcal{C})$. We prove that $v$ is $G$-invariant, where the $G$-action on $L^2_{\mathbf{M}_t}(\mu)$ is defined by:
\begin{equation}
   (g \cdot v) (\mathbf{x}) \coloneqq D\Theta_g(g^{-1} \cdot \mathbf{x})v(g^{-1} \cdot \mathbf{x}).
\end{equation}
Note that $\|g\cdot v\|^2_{L^2_{\mathbf{M}_t}(\mu)} = \|v\|^2_{L^2_{\mathbf{M}_t}(\mu)} $ because $(\Theta_{g^{-1}})_\# \mu = \mu$ so the map is well-defined, and it is easy to check this is a group action. Moreover, if $\mathbf{M}_t^{-1} \nabla \psi_k$ is an approximation sequence for $v$ in $L^2_{\mathbf{M}_t}(\mu)$, then $g \cdot (\mathbf{M}_t^{-1} \nabla \psi_k ) =\mathbf{M}_t^{-1} \nabla(\psi_k \circ \Theta_{g^{-1}})$ so we have $g \cdot v \in \mathcal{H}_\mu(t)$.

Therefore, to prove $g \cdot v =v$, it suffices to prove $g \cdot v - v \in \mathcal{V}_\mu(t)$. Since $v \in \operatorname{Tan}_\mu(\mathcal{C})$, there exists a strictly decreasing sequence $h_k>0$ converging to $0$ and $\mu_k \in \mathcal{C}$ such that $W_2\big((\mathrm{Id}+h_k v)_\# \mu, \mu_k \big)/h_k \to 0$. We denote $\rho_k \coloneqq (\mathrm{Id}+h_k v)_\# \mu$.

Let $g \in G$ and $\phi \in C_c^\infty$. Define $F = \phi \circ \Theta_g - \phi \in C_c^\infty$. Since $\mu$ and $\mu_k$ are $G$-invariant, $\int_{\mathbb{R}^n} F d \mu = \int_{\mathbb{R}^n} F d\mu_k = 0$. By the Kantorovich--Rubinstein theorem, we have
\begin{equation*}
\begin{split}
   \left|  \int \frac{F}{\operatorname{Lip}(F) +1} d\rho_k  -  \int \frac{F}{\operatorname{Lip}(F) +1} d\mu_k \right | &\le W_1 (\rho_k, \mu_k) \\&\le W_2 (\rho_k, \mu_k),
   \end{split}
\end{equation*}
and therefore $ \left|  \int F d\rho_k\right|/h_k \to 0$ as $k \to \infty$.

By Taylor's formula,
\begin{equation*}
\begin{split}
| F(\mathbf{x}+h_kv(\mathbf{x})) - &F(\mathbf{x}) - h_k \nabla F(\mathbf{x}) \cdot v(\mathbf{x})|\\
&\le \frac{n}{2} \left\|D^2F\right\|_{\sup} h_k^2 |v(\mathbf{x})|^2 .
\end{split}
\end{equation*}
Since $v \in L^2(\mu)$ and $\int_{\mathbb{R}^n} F d\mu = 0$, we conclude that 
\begin{equation*}
    \int \!F d \rho_k = \int \!F(\mathbf{x}+h_kv(\mathbf{x}))d\mu(\mathbf{x}) =  h_k \int \! \nabla F \cdot v d\mu + \mathcal{O}(h_k^2).
\end{equation*}
Combining the two estimates, we conclude that $\int \nabla F \cdot v d \mu =0$. Since $D\Theta_g(\mathbf{x})^\top \nabla \phi(g \cdot \mathbf{x}) \cdot v(\mathbf{x}) = (\nabla \phi \cdot (g \cdot v))(g \cdot \mathbf{x})$,
from $(\Theta_g)_\# \mu = \mu$ we conclude that $\int_{\mathbb{R}^n} \nabla \phi \cdot (g\cdot v )d\mu = \int_{\mathbb{R}^n} \nabla \phi \cdot v d \mu$. This finishes the proof of the claim $g \cdot v = v$.

Now we prove $v \in \mathcal{H}_{\mu,G}(t)$. Choose $\psi_k \in C_c^\infty$ such that $h_k\coloneqq \mathbf{M}_t^{-1} \nabla \psi_k \to v$ in $L^2_{\mathbf{M}_t} (\mu)$. We define the averaged functions
\begin{equation}
    \bar \psi_k (\mathbf{x}) \coloneqq \int_G \psi_k(g\cdot \mathbf{x}) dg.
\end{equation}
We have $\bar \psi_k \in C_c^\infty$: smoothness follows by  differentiating under the integral sign, which works because $G$ is compact and $D^\alpha_{\mathbf{x}}\Theta_g(\mathbf{x})$ is jointly continuous for each $\alpha$. Moreover, $\operatorname{supp}(\bar \psi_k)\subseteq \left\{g^{-1} \cdot \mathbf{x}:g \in G,\,\mathbf{x}\in\operatorname{supp}(\psi_k)\right\}$, hence $\operatorname{supp}(\bar \psi_k)$ is compact. Finally, $\bar \psi_k(g \cdot\mathbf{x}) = \bar \psi_k(\mathbf{x})$ because $dg$ is right-invariant.

Define $\bar h_k \coloneqq \mathbf{M}_t^{-1}\nabla \bar \psi_k \in L^2_{\mathbf{M}_t} (\mu)$. By using $D\Theta_g(\mathbf{x})^{-1}=D\Theta_{g^{-1}}(g \cdot \mathbf{x})$, we get
\begin{equation}
\label{eq:bar_hk_as_integral}
    \bar h_k (\mathbf{x})= \int_G (g^{-1} \cdot h_k)(\mathbf{x}) dg\text{ }\,\forall \mathbf{x} \in \bigcup_{\mu \in \mathcal{C}}\operatorname{supp}(\mu).
\end{equation}
Note that the integrand $(g^{-1} \cdot h_k ) (\mathbf{x})$ is jointly continuous in $(g,\mathbf{x})$. From \eqref{eq:bar_hk_as_integral},
\begin{align*}
    \left\|\bar h_k - v\right\|^2_{L^2_{\mathbf{M}_t}(\mu)} &= \int_{\mathbb{R}^n} \left| \int_G (g^{-1}\cdot h_k)(\mathbf{x})-v(\mathbf{x}) dg\right|^2_{\mathbf{M}_t(\mathbf{x})} d \mu \\
    &\le \int_{\mathbb{R}^n} \! \int_G \left|(g^{-1}\cdot h_k)(\mathbf{x})-v(\mathbf{x})\right|^2_{\mathbf{M}_t(\mathbf{x})} dg d \mu \\&= \int_G \|g^{-1}\cdot h_k - v\|_{L^2_{\mathbf{M}_t} (\mu)}^2 dg,
\end{align*}
where $d\mu = d\mu(\mathbf{x})$, by Cauchy--Schwarz and then Tonelli's theorem. Because we have $g^{-1} \cdot v = v$ in $L^2_{\mathbf{M}_t}(\mu)$ for all $g\in G$ and also the $G$-action on $L^2_{\mathbf{M}_t}(\mu)$ is norm-preserving, we conclude that  $\|\bar h_k - v \|_{L^2_{\mathbf{M}_t}(\mu)} \le \|h_k - v \|_{L^2_{\mathbf{M}_t}(\mu)}$. This proves that $v \in \mathcal{H}_{\mu, G}(t)$, and therefore concludes the proof of the proposition.
\end{proof}

From the above proposition, it is straightforward to check the IWCC for the example of rotationally symmetric shear.
\begin{claim} \label{claim:hortan_is_good_def_2}
Let $\mathbf{M} \equiv I$, and let $\lambda \le \alpha$ be given. For all $t \ge 0$ and $\mu \in \mathcal{C}_A$, the IWCC~\eqref{eq:pointwise_iwcc} holds at $(t,\mu)$. In particular, we have
\begin{equation}
   W_2 (\mathsf{S}_{r,t}\mu, \mathsf{S}_{r,t}\nu) \!\le\!  W_{2,\mathcal{C}_A}\!(\mathsf{S}_{r,t}\mu, \mathsf{S}_{r,t}\nu) \!\le\! e^{- \lambda (t-r)}W_{2,\mathcal{C}_A}\!(\mu, \nu) 
\end{equation}
for all $\mu, \nu \in \mathcal{C}_A$ and $t \ge r \ge 0$.
\end{claim}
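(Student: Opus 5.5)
The plan is to reduce the test space with Proposition~\ref{prop:symmetry_reduction_hortan} and then make a direct pointwise computation of $B_\lambda$ in radial directions. Take $G=\mathrm{SO}(2)$ acting linearly on $\mathbb{R}^2$ by rotations $\Theta_R(\mathbf{x})=R\mathbf{x}$. This action is smooth, and all its derivatives are jointly continuous in $(R,\mathbf{x})$. Since $D\Theta_R=R$ is orthogonal and $\mathbf{M}\equiv I$, the compatibility condition $R^\top I R=I$ holds trivially. Every $\mu\in\mathcal C_A$ is rotation invariant by definition. The proposition therefore gives $\operatorname{HorTan}_\mu(t,\mathcal C_A)\subseteq \mathcal H_{\mu,G}(t)$, the $L^2(\mu)$-closure of $\{\nabla\psi\}$ with $\psi$ radial.

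For radial $\psi(\mathbf{x})=\phi(|\mathbf{x}|)$ we have $\nabla\psi=\phi'(r)\,\mathbf{e}_r$, where $\mathbf{e}_r=\mathbf{x}/r$. So it suffices to show that $\mathbf{e}_r^\top B_\lambda(t,\mathbf{x})\mathbf{e}_r\le 0$ on the relevant support. Every $\mu\in\mathcal C_A$ is concentrated on $A$, so $\Omega_{\mathcal C_A}\subseteq \overline A=\{\tfrac12\le r\le 2\}$, where $\chi\equiv1$. On this closed annulus the formula $f=-\alpha(r-1)\mathbf{e}_r+(1+\beta(r-1))J\mathbf{x}$ holds.

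The computation uses $D\mathbf{e}_r\,\mathbf{e}_r=0$, $\mathbf{e}_r\cdot J\mathbf{x}=0$, $\mathbf{e}_r\cdot J\mathbf{e}_r=0$, and $\nabla r=\mathbf{e}_r$. It gives
\begin{equation*}
Df\,\mathbf{e}_r=-\alpha\,\mathbf{e}_r+\beta\, J\mathbf{x}+(1+\beta(r-1))J\mathbf{e}_r ,
\end{equation*}
so $\mathbf{e}_r^\top Df\,\mathbf{e}_r=-\alpha$. With $\mathbf{M}\equiv I$ and $\dot{\mathbf{M}}=0$, this yields $\mathbf{e}_r^\top B_\lambda\mathbf{e}_r=2(\lambda-\alpha)\le0$. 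Hence for $v=\phi'(r)\mathbf{e}_r$ we get $\int v^\top B_\lambda v\,d\mu=2(\lambda-\alpha)\int\phi'(r)^2d\mu\le0$.

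Next we pass to the closure $\mathcal H_{\mu,G}(t)$. Since $Df$ is continuous, $B_\lambda$ is bounded on the compact set $\overline A$. The quadratic form $v\mapsto\int v^\top B_\lambda v\,d\mu$ is therefore continuous on $L^2(\mu)$, and nonpositivity passes to limits. So the IWCC~\eqref{eq:pointwise_iwcc} holds on $\mathcal H_{\mu,G}(t)\supseteq\operatorname{HorTan}_\mu(t,\mathcal C_A)$, for every $t$ and every $\mu\in\mathcal C_A$.

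For the contraction estimate, we verify the hypotheses of Theorem~\ref{thm:wasserstein_contraction}, using Theorem~\ref{thm:pointwise_iwcc} in place of~\eqref{eq:s_integrated_iwcc}:
\begin{itemize}
\item $\mathcal C_A$ is nonempty, since it contains $\sigma$.
\item $\mathcal C_A$ is forward invariant: $A$ is forward invariant because $\dot r=-\alpha(r-1)$ there, and the flow commutes with rotations because $f$ is $\mathrm{SO}(2)$-equivariant, so pushforwards stay rotation invariant.
\item $\kappa_T<\infty$, because $\Omega_{\mathcal C_A}$ is bounded.
\item $\underline m=\overline m=1$.
\end{itemize}
Theorem~\ref{thm:wasserstein_contraction} then gives the stated bound with constant $\sqrt{\overline m/\underline m}=1$.

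The main subtlety, rather than a real obstacle, is keeping straight that $\Omega_{\mathcal C_A}$ may touch $\partial A$. One must ensure that the explicit formula for $f$ and the bound on $B_\lambda$ are used on $\overline A$, which holds since $\chi=1$ for $r\ge\tfrac14$. The only genuinely conceptual step is the reduction to radial gradients, and that is supplied by Proposition~\ref{prop:symmetry_reduction_hortan}.
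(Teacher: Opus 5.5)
Your proposal is correct and follows essentially the same route as the paper: the symmetry reduction of Proposition~\ref{prop:symmetry_reduction_hortan} with $G=\mathrm{SO}(2)$, the radial computation $\hat{\mathbf{r}}^\top D_{\mathbf{x}}f\,\hat{\mathbf{r}}=-\alpha$ on the annulus, and then Theorem~\ref{thm:pointwise_iwcc} with $\kappa_T<\infty$ because $\bar A$ is compact. The only minor difference is how you pass to the $L^2(\mu)$-closure. You use continuity of the quadratic form, which relies on $B_\lambda$ being bounded on $\bar A$. The paper instead transfers the radial structure $v=a(\mathbf{x})\hat{\mathbf{r}}$ to the limit along an a.e.-convergent subsequence and then computes pointwise.
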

\begin{proof}
Fix $\mu \in \mathcal{C}_A$ and $t \ge 0$. Let $v \in \operatorname{HorTan}_\mu (t,\mathcal{C}_A)$. By Proposition~\ref{prop:symmetry_reduction_hortan} with $G=\mathrm{SO}(2)$, there exist $\psi_k \in C_c^\infty (\mathbb{R}^2)$ such that $\nabla \psi_k \to v$ in $L^2(\mu)$ as $k \to \infty$ and $\psi_k(R\cdot)=\psi_k(\mathbf{\cdot})$ for all $R \in \mathrm{SO}(2)$. 

Since we have $\psi_k (e^{sJ}\mathbf{x})=\psi_k(\mathbf{x})$ for $s \in \mathbb{R}$, differentiating at $s=0$ gives $ (J\mathbf{x})^\top  \nabla \psi_k (\mathbf{x})=0$. By passing to a subsequence, we get $(J\mathbf{x})^\top v(\mathbf{x})=0$ $\mu$-a.s., and in particular,
\begin{equation}
    v(\mathbf{x})= a (\mathbf{x}) \hat {\mathbf{r}},\,\,\mu\text{-a.s.}
\end{equation}
Here, $a(\mathbf{x}) \coloneqq  v (\mathbf{x})\cdot\hat {\mathbf{r}} \in \mathbb{R}$ and $\hat {\mathbf{r}} \coloneqq 1_{\mathbf{x} \neq 0} \frac{\mathbf{x}}{|\mathbf{x}|}$.

Because $\hat {\mathbf{r}}^\top D_{\mathbf{x}}f(t,\mathbf{x})\hat{\mathbf{r}} = -\alpha$ for all $\mathbf{x} \in A$, we have
\begin{equation*}
    v(\mathbf{x})^\top B_{\lambda}(t,\mathbf{x}) v(\mathbf{x}) = 2(\lambda-\alpha)a(\mathbf{x})^2 \le 0,\,\,\mu\text{-a.s.},
\end{equation*}
and hence $\int_{\mathbb{R}^2}v(\mathbf{x})^\top B_{\lambda}(t,\mathbf{x}) v(\mathbf{x}) d \mu(\mathbf{x}) \le 0$. This completes the proof of the first part. The second part follows from Theorem~\ref{thm:pointwise_iwcc}; here, finiteness of \eqref{eq:kappaT_def} holds because $\bar A \subseteq\mathbb{R}^2$ is compact.
\end{proof}

Note that we could generalize the above argument by replacing $\mathrm{SO}(2)$ with a compact Lie group $G$. For example, the following holds.
\begin{claim}
\label{claim:example_compactgroup}
Let $G$ be a closed subgroup of $O(n)$ acting canonically on $\mathbb R^n$, and let $\mathfrak g\subseteq\mathfrak{so}(n)=\left\{A \in \mathbb{R}^{n \times n } : A^\top =- A \right\}$ be its Lie algebra. Let $K\subseteq\mathbb R^n$ be a Borel set, and set $\mathcal C_{K} = \left\{\mu\in\mathcal P_2(\mathbb R^n):\mu(K)=1,\,\, R_\#\mu=\mu\,\, \forall R\in G\right\}$.
Set $\mathbf M\equiv I$ and set
\begin{equation}
    f(\mathbf{x})=-\nabla U(\mathbf{x})+\sum_{j=1}^N \omega_j(\mathbf{x})A_j\mathbf{x},
\end{equation}
where $U \in C^2(\mathbb{R}^n)$ and $A_j \in \mathfrak g$, $\omega_j\in C^1(\mathbb{R}^n)$ for each $j$.
Assume that there exists $\alpha\in\mathbb{R}$ such that $\xi^\top \nabla^2U(\mathbf{x})\xi\ge \alpha|\xi|^2$ for all $\mathbf{x} \in K$, $\xi\in (\mathfrak g \mathbf{x})^\perp= \left\{\xi\in\mathbb{R}^n: \xi^\top A \mathbf{x}=0\,\,\forall A\in \mathfrak g\right\}$. Choose any $\lambda \le \alpha$. Then, the IWCC~\eqref{eq:pointwise_iwcc} holds at each $(t,\mu)$ with $t \ge 0$ and $\mu\in \mathcal C_{K}$.
\end{claim}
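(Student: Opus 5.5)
The plan is to mirror the proof of Claim~\ref{claim:hortan_is_good_def_2}, replacing $\mathrm{SO}(2)$ by $G$ and invoking Proposition~\ref{prop:symmetry_reduction_hortan}.

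\textbf{Step 1: reduction to invariant potentials.} Fix $t\ge0$ and $\mu\in\mathcal C_K$, and let $v\in\operatorname{HorTan}_\mu(t,\mathcal C_K)$. First check the hypotheses of Proposition~\ref{prop:symmetry_reduction_hortan}. The group $G$ is a compact Hausdorff group, being a closed subgroup of $O(n)$. The maps $\Theta_R(\mathbf{x})=R\mathbf{x}$ are linear, so all their derivatives are jointly continuous in $(R,\mathbf{x})$. Since $\mathbf M\equiv I$ and $D\Theta_R=R$ is orthogonal, the metric-invariance identity $R^\top R=I$ holds. Finally, $\mathcal C_K$ consists of $G$-invariant measures. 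The proposition therefore gives $v\in\mathcal H_{\mu,G}(t)$: there exist $G$-invariant $\psi_k\in C_c^\infty(\mathbb R^n)$ with $\nabla\psi_k\to v$ in $L^2(\mu)$.

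\textbf{Step 2: orthogonality to orbits.} For each $A\in\mathfrak g$, we have $e^{sA}\in G$, so $\psi_k(e^{sA}\mathbf{x})=\psi_k(\mathbf{x})$. Differentiating at $s=0$ gives $(A\mathbf{x})^\top\nabla\psi_k(\mathbf{x})=0$ for all $\mathbf{x}$. Passing to a $\mu$-a.e.\ convergent subsequence yields $(A\mathbf{x})^\top v(\mathbf{x})=0$ $\mu$-a.s. Because $\mathfrak g$ is finite-dimensional, it suffices to use a basis, so a single null set works for all $A$. Hence $v(\mathbf{x})\in(\mathfrak g\mathbf{x})^\perp$ for $\mu$-a.e.\ $\mathbf{x}$. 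Since $\mu(K)=1$, we may also take $\mathbf{x}\in K$.

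\textbf{Step 3: the pointwise quadratic form.} With $\mathbf M\equiv I$, we have $B_\lambda=Df^\top+Df+2\lambda I$, so $\xi^\top B_\lambda\xi=2\xi^\top Df\,\xi+2\lambda|\xi|^2$. Compute
\[
Df(\mathbf{x})=-\nabla^2U(\mathbf{x})+\sum_j\omega_j(\mathbf{x})A_j+\sum_j (A_j\mathbf{x})\nabla\omega_j(\mathbf{x})^\top .
\]
For $\xi\in(\mathfrak g\mathbf{x})^\perp$:
\begin{itemize}
\item the term $\xi^\top A_j\xi$ vanishes by skew-symmetry;
\item the term $\xi^\top(A_j\mathbf{x})\,\nabla\omega_j^\top\xi$ vanishes because $\xi^\top A_j\mathbf{x}=0$, since $A_j\in\mathfrak g$.
\end{itemize}
Hence $\xi^\top Df\,\xi=-\xi^\top\nabla^2U\xi\le-\alpha|\xi|^2$ for $\mathbf{x}\in K$. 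This gives $v^\top B_\lambda v\le2(\lambda-\alpha)|v|^2\le0$ $\mu$-a.s. Integrating against $\mu$ yields \eqref{eq:pointwise_iwcc}.

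\textbf{Main obstacle.} The only nonroutine point is the hypothesis check in Step~1, which I expect to go through directly as indicated. The rest is a direct computation; the mild care needed is in the measure-theoretic passage from $\nabla\psi_k$ to $v$ in Step~2, handled by the subsequence and basis argument. Note that the Step~3 computation needs no regularity beyond $U\in C^2$ and $\omega_j\in C^1$.
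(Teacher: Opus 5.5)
Your proposal is correct and follows essentially the same route as the paper: you invoke Proposition~\ref{prop:symmetry_reduction_hortan} to approximate $v$ by gradients of $G$-invariant potentials, deduce $v(\mathbf{x})\perp\mathfrak g\mathbf{x}$ $\mu$-a.s.\ by differentiating along $e^{sA}$, and observe that the terms $\omega_j A_j\mathbf{x}$ drop out of the quadratic form by skew-symmetry and orthogonality, leaving only the Hessian bound on $K$. Your explicit hypothesis check for the proposition and your computation of $D(\omega_j A_j\mathbf{x})$ fill in details the paper leaves implicit; the basis argument is harmless but unnecessary, since a single $\mu$-a.e.\ convergent subsequence already gives the orthogonality for all $A\in\mathfrak g$ at once.
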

\begin{proof}
By the closed subgroup theorem, $G$ is a compact Lie subgroup. Also, $\mathfrak{g} = \{A \in \mathfrak{so}(n): e^{tA}\in G \text{ for all }t \in \mathbb{R}\}$. Let $v \in \operatorname{HorTan}_\mu(t,\mathcal{C}_K)$. By Proposition~\ref{prop:symmetry_reduction_hortan}, there exist $G$-invariant potentials $\psi_k \in C_c^\infty(\mathbb{R}^n)$ such that $\nabla \psi_k \to v$ in $L^2(\mu)$. By a similar argument as in Claim~\ref{claim:hortan_is_good_def_2}, we have $v(\mathbf{x})^\top Ax = 0$ for all $\mathbf{x}\in\mathbb{R}^n$ and all $A \in \mathfrak{g}$, where $v$ is redefined up to a $\mu$-null set. Fix $1 \le j \le N$, and write $z_j(\mathbf{x}) \coloneqq \omega_j(\mathbf{x})A_j \mathbf{x}$ for all $\mathbf{x}\in\mathbb{R}^n$. Because $v(\mathbf{x})^\top A_j \mathbf{x} \equiv 0$, we have $v(\mathbf{x})^\top Dz_j(\mathbf{x}) v(\mathbf{x})\equiv 0$, and therefore $v(\mathbf{x})^\top B_\lambda (t,\mathbf{x}) v(\mathbf{x}) \equiv - 2 v(\mathbf{x})^\top \nabla^2 U(\mathbf{x})v(\mathbf{x}) + 2 \lambda |v(\mathbf{x})|^2$, and hence is no greater than $2(\lambda-\alpha)|v(\mathbf{x})|^2 \le0$ $\mu$-a.s.; this completes the proof.
\end{proof}

\section{Incremental ISS and Stabilization of Probability Measures}

We now develop a control-system version of the Intrinsic Wasserstein Contraction Theorem. Consider the control system
\begin{equation}
\dot{\mathbf{x}}=f(t,\mathbf{x},\mathbf{u}),
\end{equation}
where $f:[0,\infty)\times\mathbb{R}^n \times \mathbb{R}^m \to \mathbb{R}^n$. \textbf{In this section, we assume that $f$, $D_\mathbf{x} f$, $D_\mathbf{u} f$ are continuous, and that $\mathbf{M}:[0,\infty)\times\mathbb{R}^n\to\mathcal{S}_{++}^n$ is a $C^1$ map.}

For a continuous control field $\mathbf{k}:[0,\infty)\times\mathbb R^n\to\mathbb R^m$ with $D_{\mathbf{x}} \mathbf k$ continuous, set
\begin{equation}
    f^{\mathbf{k}}(t,\mathbf{x}) \coloneqq f(t,\mathbf{x}, \mathbf{k}(t,\mathbf{x})),
\end{equation}  and we denote its flow map by $(\Phi^{\mathbf{k}}_{r,t})_{r \le t}$. We also define the following (continuous) maps:
\begin{equation}
\begin{aligned}
    A^{\mathbf{k}}(t,\mathbf{x}) &\coloneqq D_{\mathbf{x}} f^{\mathbf{k}}(t,\mathbf{x}),\\
    \dot{\mathbf M}^{\mathbf{k}}(t,\mathbf{x}) &\coloneqq \partial_t\mathbf M(t,\mathbf{x}) + D_{\mathbf{x}}\mathbf M(t,\mathbf{x})[f^{\mathbf{k}}(t,\mathbf{x})],\\
    B_\lambda^{\mathbf{k}}(t,\mathbf{x})&\coloneqq \dot{\mathbf M}^{\mathbf{k}}(t,\mathbf{x}) + A^{\mathbf{k}}(t,\mathbf{x})^\top\mathbf M(t,\mathbf{x}) \\
    &+ \mathbf M(t,\mathbf{x})A^{\mathbf{k}}(t,\mathbf{x}) + 2\lambda\mathbf M(t,\mathbf{x}),
\end{aligned}
\end{equation}
where $\lambda \in \mathbb{R}$ is a constant. Whenever $f^{\mathbf{k}}$ is forward-complete, we write $\mathsf S^{\mathbf{k}}_{r,t}=(\Phi^{\mathbf{k}}_{r,t})_\#$ for the induced pushforward map for each $0 \le r \le t <\infty$.

\subsection{Control Homotopies and an Incremental Wasserstein ISS Theorem}
\label{subsec:wasserstein_iss}
A \emph{control homotopy} joining two control fields $\mathbf k^0$ and $\mathbf k^1$ is a continuous map $\bar{\mathbf u}=\bar{\mathbf u}(s,t,\mathbf{x}):[0,1]\times[0,\infty)\times\mathbb R^n\to\mathbb R^m$ for which $\partial_s\bar{\mathbf u}$ and $D_{\mathbf x}\bar{\mathbf u}$ exist and are continuous, and such that $\bar{\mathbf u}(0,\cdot,\cdot)=\mathbf k^0$ and $\bar{\mathbf u}(1,\cdot,\cdot)=\mathbf k^1$.
We write $\bar{\mathbf u}_s(t,\mathbf{x})\coloneqq \bar{\mathbf u}(s,t,\mathbf{x})$ and
\begin{equation}
\eta_s(t,\mathbf{x})\coloneqq\partial_s\bar{\mathbf u}_s(t,\mathbf{x}).
\end{equation}

The next condition is the control-system counterpart of the IWCC~\eqref{eq:s_integrated_iwcc}.

\begin{definition}[control IWCC, $s$-integrated version]
Let $\mathcal C\subseteq\mathcal P_2$, let $\lambda\in\mathbb R$, and let $\bar{\mathbf u}$ be a control homotopy. We say that $(\mathbf M,\bar{\mathbf u})$ satisfies the $s$-integrated control Intrinsic Wasserstein Contraction Condition (cIWCC) on $\mathcal C$ with rate $\lambda$ if, for every $t\ge0$ and every admissible pair $(\mu_s,v_s)_{s\in[0,1]}$ with $\mu_\cdot\subseteq\mathcal C$ and $v\in\mathcal H_{\mu_\cdot}(t)$,
\begin{equation}
\label{eq:integrated_control_iwcc}
\int_0^1\!\!\int_{\mathbb R^n} v_s(\mathbf{x})^\top B_\lambda^{\bar{\mathbf u}_s}(t,\mathbf{x}) v_s(\mathbf{x}) d\mu_s(\mathbf{x}) ds \le0.  
\end{equation}
Note that $B_\lambda^{\bar{\mathbf u}_s}(t,\mathbf{x})$ is jointly continuous in $(s,t,\mathbf{x})$, hence Borel measurable in $(s,\mathbf{x})$.
\end{definition}

To state the ISS analogue of Theorem~\ref{thm:wasserstein_contraction}, we need an auxiliary definition. Let $(\mu_s)_{s\in[0,1]}$ be a narrowly continuous curve in $\mathcal{P}_2$ and let $\boldsymbol\mu(ds,d\mathbf x)=\mu_s(d\mathbf x)ds$. Recall that in Lemma~\ref{lemma:vertical_horizontal_decomp}, $\mathcal{H}_{\mu \cdot}(t)$ is a closed subspace of $L^2_{\mathbf{M}_t} (\boldsymbol{\mu})$. We denote the orthogonal projection onto $\mathcal{H}_{\mu \cdot}(t)$ by $P_{\mathcal{H}_{\mu \cdot}(t)}$.

\begin{definition}[Homotopy forcing]
For a control homotopy $\bar{\mathbf u}$ and $t\ge0$, define the quantity \begin{equation*}
    \gamma_{\bar{\mathbf u}}(t;\mu_\cdot) \coloneqq \begin{cases} \left\| P_{\mathcal{H}_{\mu \cdot}(t)} \big(D_{\mathbf{u}}f \,\eta\big) \right\|_{L^2_{\mathbf M_t}(\boldsymbol\mu)} &\! \!\!\!\text{if } D_{\mathbf{u}}f \,\eta \in L^2_{\mathbf M_t}(\boldsymbol\mu)\\
    \infty &\! \!\!\!\text{otherwise},
\end{cases} \end{equation*}
where $ (s,\mathbf{x})\mapsto D_{\mathbf u}f(t,\mathbf{x},\bar{\mathbf u}_s(t,\mathbf{x}))\eta_s(t,\mathbf{x})$ is abbreviated by $D_{\mathbf{u}}f \,\eta$.

The homotopy forcing on $\mathcal{C} \subseteq \mathcal{P}_2$ at time $t \ge 0 $ is defined as
\begin{equation}
    \Gamma_{\bar{\mathbf u},\mathcal C}(t)
    \coloneqq \sup_{\mu_\cdot\subseteq\mathcal C} \gamma_{\bar{\mathbf u}}(t;\mu_\cdot) 
\end{equation}
where the supremum is taken over all narrowly continuous curves $(\mu_s)_{s \in [0,1]}$ in $\mathcal{C}$.
\end{definition}

The following Dini comparison lemma is needed for the proof of our main ISS theorem. To the best of our knowledge, the lemma in this form has not appeared in the literature before.

\begin{lemma}[Square-root Dini comparison with forcing]
\label{lemma:sqrt_dini_forcing}
Let $A:[a,b] \to [0,\infty)$ be lower semicontinuous from the left: for all $t \in (a,b]$, $A(t) \le \liminf_{s \to t-} A(s)$. Let $\Gamma:[a,b] \to [0,\infty)$ be Lebesgue measurable, and let $\lambda \in \mathbb{R}$. If $D^+ A \le - 2 \lambda A + 2 \sqrt{A} \Gamma$ on $[a,b)$, we have $\sqrt{A(t)} \le e^{-\lambda(t-a)} \sqrt{A(a)} + \int_a^t e ^{-\lambda(t-\tau)} \Gamma(\tau) d \tau$ on $[a,b]$.
\end{lemma}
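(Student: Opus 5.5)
The plan is to reduce the statement to the monotonicity criterion already used in Lemma~\ref{lemma:comparison_dini}, namely \cite[Lemma~2.3]{WaZh2011}: a function that is lower semicontinuous from the left and has $D^+\le 0$ everywhere is non-increasing. Two features prevent a direct application. The square root is not differentiable at $0$, and $\Gamma$ is only measurable, so $\tau\mapsto\int_a^\tau\Gamma$ need not have Dini derivative $\ge\Gamma(\tau)$ at every point. I will handle the first by a $\delta$-shift and the second by a lower semicontinuous majorant.

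Fix $t\in(a,b]$. If $\int_a^t\Gamma=\infty$, the right-hand side is $+\infty$, since $e^{-\lambda(t-\tau)}\ge e^{-|\lambda|(t-a)}>0$, and there is nothing to prove. Otherwise $\Gamma\in L^1([a,t])$. By the Vitali--Carath\'eodory theorem, for each $\varepsilon>0$ there is a lower semicontinuous $g:[a,t]\to(-\infty,\infty]$ with $g\ge\Gamma$ and $\int_a^t(g-\Gamma)<\varepsilon$. Lower semicontinuity of $g$ gives, at every $\tau$, $\liminf_{h\to0+}\frac1h\int_\tau^{\tau+h}e^{\lambda(r-a)}g(r)\,dr\ge e^{\lambda(\tau-a)}g(\tau)$. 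This pointwise lower Dini bound is exactly what fails for $\Gamma$ itself.

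Next, for $\delta>0$ set $w(\tau)\coloneqq\sqrt{A(\tau)+\delta}\ge\sqrt\delta$. Writing
\[
\frac{w(\tau+h)-w(\tau)}{h}=\frac{A(\tau+h)-A(\tau)}{h\,\big(w(\tau+h)+w(\tau)\big)},
\]
I aim to show $D^+w\le D^+A/(2w)$. The hypothesis forces $D^+A(\tau)<\infty$, hence $\limsup_{h\to0+}A(\tau+h)\le A(\tau)$. When the difference quotient of $A$ is positive, the denominator is at least $2w(\tau)$. When it is negative, it stays nonpositive, and I will do a short case split on the sign of $D^+A(\tau)$ to conclude. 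This is the routine but fiddly step.

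Using $A/w=w-\delta/w$ and $\sqrt A/w\le1$, the hypothesis then yields
\[
D^+w\le-\lambda w+\Gamma+|\lambda|\sqrt\delta\le-\lambda w+g+|\lambda|\sqrt\delta .
\]
Define
\[
Z(\tau)\coloneqq e^{\lambda(\tau-a)}w(\tau)-\int_a^\tau e^{\lambda(r-a)}\big(g(r)+|\lambda|\sqrt\delta\big)\,dr .
\]
$Z$ is lower semicontinuous from the left, because $w$ is (as $A$ is) and the integral is continuous. By the product rule for Dini derivatives with a smooth positive factor, together with the lower Dini bound on the integral, $D^+Z\le e^{\lambda(\tau-a)}\big(D^+w+\lambda w-g-|\lambda|\sqrt\delta\big)\le0$ on $[a,t)$. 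Hence $Z$ is non-increasing, which gives
\[
\sqrt{A(t)}\le w(t)\le e^{-\lambda(t-a)}\sqrt{A(a)+\delta}+\int_a^t e^{-\lambda(t-r)}\big(g(r)+|\lambda|\sqrt\delta\big)\,dr .
\]
Letting $\delta\to0+$ and then $\varepsilon\to0+$ (using $\int(g-\Gamma)<\varepsilon$ and the boundedness of the exponential weight) gives the claim.

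I expect the main obstacle to be the measurability issue with $\Gamma$, which the Vitali--Carath\'eodory majorant resolves, and secondarily the careful justification of the Dini chain rule for $\sqrt{A+\delta}$ when $A$ may jump.
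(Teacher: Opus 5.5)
Your proposal is correct and follows essentially the same route as the paper. Both use the shift $\sqrt{A+c}$, a Vitali--Carath\'eodory lower semicontinuous majorant of the forcing, subtraction of its running integral, and the Dini monotonicity lemma (Lemma~\ref{lemma:comparison_dini} with $\rho=0$). The differences are cosmetic: the paper obtains $D^+\sqrt{A+c}\le D^+A/(2\sqrt{A+c})$ in one line from the concavity inequality $\sqrt{x+c}-\sqrt{y+c}\le \frac{x-y}{2\sqrt{y+c}}$, which makes your case split and the remark $\limsup_{h\to0+}A(\tau+h)\le A(\tau)$ unnecessary. It also applies Vitali--Carath\'eodory to $e^{\lambda(\tau-a)}(\Gamma+|\lambda|\sqrt{c})$ rather than to $\Gamma$ itself.
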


\begin{proof}
For each fixed $c >0$, define $W_c(t) \coloneqq \sqrt{A(t)+c}$ and $F_c(t) \coloneqq e^{\lambda(t-a)} W_c(t)$. $W_c$ is lower semicontinuous from the left. Since $x \mapsto -\sqrt{x+c}$ is convex, we have the subgradient inequality  $\sqrt{x+c} - \sqrt{y+c} \le \frac 1 2 \frac{x-y}{\sqrt{y+c}}$ for $x,y >-c$. From this, we conclude $D^+ W_c \le \frac {D^+ A}{2 \sqrt {A+c}} \le \frac{-\lambda A +\sqrt{A}\Gamma}{\sqrt{A+c}}$ on $[a,b)$, and hence $D^+ W_c \le - \lambda W_c + |\lambda| \sqrt{c} + \Gamma$ on $[a,b)$.

Moreover, we have $D^+ F_c(t) = W_c (t) \lambda e^{\lambda(t-a)} + \limsup_{h \to 0+} \left[ e^{\lambda(t+h-a)} \frac{W_c(t+h) - W_c(t)}{h} \right] \le W_c(t) \lambda e^{\lambda(t-a)} + e^{\lambda(t-a)} D^+ W_c(t)$, where the last inequality used $e^{\lambda(t+h-a)}>0$. Hence, we have $D^+ F_c (t) \le e^{\lambda (t - a)} ( \Gamma( t) + |\lambda| \sqrt{c}) $ on $[a,b)$.

Fix $s \in (a,b]$. We now prove the claimed inequality $\sqrt{A(s)} \le e^{-\lambda(s-a)} \sqrt{A(a)} + \int_a^s e^{-\lambda(s-\tau)} \Gamma(\tau)d \tau $. Define $q_c(t) \coloneqq e^{\lambda (t - a)} ( \Gamma( t) + |\lambda| \sqrt{c})$ on $[a,s]$. We may assume $\int_a^s q_c < \infty$ because otherwise \eqref{eq:goal_c_delta} will trivially hold. Let $\delta >0$ be given. By the Vitali--Carath\'{e}odory theorem \cite[Theorem~2.25]{Ru1987}, there exists a lower semicontinuous function $r_{c,\delta} $ such that $0 \le q_c \le r_{c,\delta} \le \infty$ and $\int_a^s r_{c,\delta} - \int_a^s q_c < \delta$. Define $R_{c,\delta} (t) \coloneqq \int_a^t r_{c,\delta}$ and $G_{c,\delta} \coloneqq F_c - R_{c,\delta}$ on $[a,s]$.  $R_{c,\delta}$ is continuous because $\int _a^s r_{c,\delta} < \infty$, and hence $G_{c,\delta}$ is lower semicontinuous from the left.

Then, by the subadditivity of $\limsup$, we have $D^+ G_{c,\delta} (r) \le D^+ F_c(r) + \limsup_{h\to 0+} \left[-\frac{R_{c,\delta} (r+h) - R_{c,\delta}(r)}{h} \right] \le D^+F_c(r) -r_{c,\delta}(r) \le D^+F_c(r) -q_c(r)$ for all $r \in [a,s)$, where the second inequality comes from the lower semicontinuity of $r_{c,\delta}$. Therefore, $D^+ G_{c,\delta} \le 0$, and by Lemma~\ref{lemma:comparison_dini}, $G_{c,\delta}(s) \le G_{c,\delta} (a)$.
Therefore, we have
\begin{equation}
\label{eq:goal_c_delta}
   e^{\lambda (s-a)}\sqrt{A(s)+c} - \delta - \int_a^s q_c < \sqrt{A(a)+c}.
\end{equation}

Passing to the limit $\delta \to 0+$ and then $c \to 0+$ concludes the proof.
\end{proof}

We now state and prove our main incremental ISS bound along a control homotopy.
\begin{theorem}[Incremental Wasserstein ISS]
\label{thm:iss_contraction}
Let $\mathcal C\subseteq\mathcal P_2$ be a nonempty subset and let $(\bar{\mathbf{u}}_s)_{s \in [0,1]}$ be a control homotopy. We assume that for each $s \in [0,1]$, $f^{\bar{\mathbf{u}}_s}$ is forward-complete and $\mathcal{C}$ is forward-invariant under the flow $(\mathsf{S}^{\bar{\mathbf{u}}_s}_{r,t})$ for all $t \ge r \ge 0$.

Let $\Omega_{\mathcal{C}}:=\bigcup_{\mu \in \mathcal{C}} \operatorname{supp}(\mu) \subseteq \mathbb{R}^n$, and let $\lambda \in \mathbb{R}$. Assume that for every $T>0$, $\kappa_T<\infty$ and $\chi_T <\infty$ where
\begin{subequations}
\begin{align}
\kappa_T &\coloneqq \sup_{\substack{s\in[0,1]\\0\le \tau\le T\\\mathbf{x}\in\Omega_{\mathcal C}}}
\lambda_{\max} \left(\mathbf M(\tau,\mathbf{x})^{-1/2} B^{\bar{\mathbf{u}}_s}_\lambda (\tau,\mathbf{x}) \mathbf M(\tau,\mathbf{x})^{-1/2}\right), \\
\begin{split}
\chi_T &\coloneqq \sup_{\substack{s\in[0,1]\\ 0\le\tau\le T\\ \mathbf{x}\in\Omega_{\mathcal C}}}
\Bigl[\eta_s(\tau,\mathbf{x})^\top D_{\mathbf u}f(\tau,\mathbf{x},\bar{\mathbf u}_s(\tau,\mathbf{x}))^\top \mathbf M(\tau,\mathbf{x})\\&\qquad\qquad\qquad D_{\mathbf u}f(\tau,\mathbf{x},\bar{\mathbf u}_s(\tau,\mathbf{x})) \eta_s(\tau,\mathbf{x})\Bigr].
\end{split}
\end{align}
\end{subequations}
Assume $\underline{m} I \preceq \mathbf{M}(t, \mathbf{x}) \preceq \overline {m} I$ for all $(t,\mathbf{x}) \in [0,\infty)\times \Omega_{\mathcal C}$ for some positive constants $\underline{m}, \overline {m}>0$. Let $\Gamma:[0,\infty) \to [0, \infty]$ be a Lebesgue measurable map such that $\Gamma_{\bar{\mathbf{u}},\mathcal{C}}(t) \le \Gamma(t)$ for all $t \ge0$.

If $(\mathbf M,\bar{\mathbf u})$ satisfies the cIWCC on $\mathcal C$ with rate $\lambda$, then, we have the bound
\begin{equation}
\begin{split}
 W_{2,\mathcal C} \left( \mathsf S^{\bar{\mathbf u}_0}_{r,t}\mu_0, \mathsf S^{\bar{\mathbf u}_1}_{r,t}\mu_1 \right)
&\le \sqrt{\frac{\overline m}{\underline m}} e^{-\lambda(t-r)} W_{2,\mathcal C}(\mu_0,\mu_1) \\&+\frac{1}{\sqrt{\underline m}} \int _r^ t e^{-\lambda (t-\tau)} \Gamma(\tau) d\tau
\end{split}
\end{equation}
for every $\mu_0, \mu_1 \in \mathcal{C}$ and $t \ge r \ge 0$.

In particular, suppose that $\mu_0$ is an equilibrium under $\bar{\mathbf u}_0$. If $\Gamma(t)\le\overline{\Gamma}$ for all $t\ge0$, $W_{2,\mathcal{C}} (\mu_0, \mu_1) < \infty$, and $\lambda>0$, we have $\limsup_{t \to \infty} W_{2,\mathcal{C}} (\mu_0, \mathsf S^{\bar{\mathbf u}_1}_{r,t}\mu_1 ) \le \frac{\overline{\Gamma}}{\lambda \sqrt{\underline m}}$ for all $r \ge 0$.
\end{theorem}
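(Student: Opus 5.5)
The proof follows the template of the proof of Theorem~\ref{thm:wasserstein_contraction}, with the flow now depending on the homotopy parameter $s$. Fix $0\le r<t<T$ and a narrowly continuous curve $(\mu_s)_{s\in[0,1]}$ in $\mathcal C$ from $\mu_0$ to $\mu_1$ with $\mathcal A_{\mathbf M_r}(\mu_\cdot)<\infty$. For $\tau\in[r,t]$ set $\mu^\tau_s\coloneqq\mathsf S^{\bar{\mathbf u}_s}_{r,\tau}\mu_s$. Forward invariance under each flow keeps this curve in $\mathcal C$, and its endpoints are $\mathsf S^{\bar{\mathbf u}_0}_{r,\tau}\mu_0$ and $\mathsf S^{\bar{\mathbf u}_1}_{r,\tau}\mu_1$. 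Define $A(\tau)\coloneqq\mathcal A_{\mathbf M_\tau}(\mu^\tau_\cdot)$. The goal is to verify the hypotheses of Lemma~\ref{lemma:sqrt_dini_forcing} with forcing $\Gamma$. That lemma yields
\[
\sqrt{A(t)}\le e^{-\lambda(t-r)}\sqrt{A(r)}+\int_r^t e^{-\lambda(t-\tau)}\Gamma(\tau)\,d\tau .
\]
Taking the infimum over curves, and using $\underline m W^2_{2,\mathcal C}\le W^2_{2,\mathcal C,\mathbf M_\tau}\le\overline m W^2_{2,\mathcal C}$ as before, gives the stated bound. The \emph{in particular} part then follows from $\mathsf S^{\bar{\mathbf u}_0}_{r,t}\mu_0=\mu_0$ together with $\int_r^t e^{-\lambda(t-\tau)}\overline\Gamma\,d\tau\le\overline\Gamma/\lambda$. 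If $\Gamma$ is infinite on a set of positive measure the bound is trivial, so I may assume otherwise; I will check that the lemma's argument tolerates $\Gamma$ taking the value $\infty$ on a null set, or else redefine $\Gamma$ there.

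\textbf{Pushforward step.} The key change is that pushing forward from time $\tau$ to $\tau+h$ uses the $s$-dependent map $\Phi_s\coloneqq\Phi^{\bar{\mathbf u}_s}_{\tau,\tau+h}$. I will apply Lemma~\ref{lemma:pushforward_admissible} with this map, which requires $\Phi$ to be $C^1$ jointly in $(s,\mathbf x)$. This regularity follows from differentiable dependence on parameters, since $\partial_s\bar{\mathbf u}$, $D_{\mathbf x}\bar{\mathbf u}$, $D_{\mathbf x}f$ and $D_{\mathbf u}f$ are all continuous. The transported velocity is $D\Phi_s\,u^\tau_s+\partial_s\Phi_s$. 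The variational equation gives $\partial_s\Phi_s(\mathbf x)=\int_\tau^{\tau+h}D\Phi^{\bar{\mathbf u}_s}_{\sigma,\tau+h}\,D_{\mathbf u}f\,\eta_s\,d\sigma$, evaluated along the trajectory, so $\partial_s\Phi_s=h\,D_{\mathbf u}f\,\eta_s(\tau,\mathbf x)+o(h)$. The bound $\chi_T<\infty$, combined with the energy growth estimate~\eqref{eq:energy_density_growth} (now with $\kappa_T$ taken uniformly over $s$), makes this $o(h)$ uniform on $\Omega_{\mathcal C}$. It also gives the integrability condition~\eqref{eq:integrability_condition_pushforward_lemma}.

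\textbf{Finiteness, semicontinuity, and the Dini inequality.} The same estimates give $A(\tau)\le e^{\gamma_T h}A(\tau-h)+O(h)\bigl(\sqrt{A(\tau-h)}+h\bigr)$. This yields finiteness of $A$ and lower semicontinuity from the left, exactly as before. For the Dini inequality, let $u^\tau\in\mathcal H_{\mu^\tau_\cdot}(\tau)$ be the minimizer from Lemma~\ref{lemma:vertical_horizontal_decomp}. Expand $|D\Phi_s u^\tau_s+\partial_s\Phi_s|^2_{\mathbf M_{\tau+h}}$ to first order in $h$ and apply reverse Fatou. This gives
\[
D^+A(\tau)\le\int_0^1\!\!\int_{\mathbb R^n} (u^\tau_s)^\top\bigl(B^{\bar{\mathbf u}_s}_\lambda-2\lambda\mathbf M\bigr)u^\tau_s\,d\mu^\tau_s\,ds+2\bigl\langle u^\tau,D_{\mathbf u}f\,\eta\bigr\rangle_{L^2_{\mathbf M_\tau}(\boldsymbol\mu^\tau)} .
\]
The cIWCC~\eqref{eq:integrated_control_iwcc} makes the first term at most $-2\lambda A(\tau)$. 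Since $u^\tau\in\mathcal H$, the cross term equals $2\langle u^\tau,P_{\mathcal H}(D_{\mathbf u}f\,\eta)\rangle$. By Cauchy--Schwarz this is at most $2\sqrt{A(\tau)}\,\gamma_{\bar{\mathbf u}}(\tau;\mu^\tau_\cdot)\le2\sqrt{A(\tau)}\,\Gamma(\tau)$. Here $D_{\mathbf u}f\,\eta\in L^2_{\mathbf M_\tau}$ because $\chi_T<\infty$.

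\textbf{Main obstacle.} I expect the hardest part to be justifying the first-order expansion of the squared transported velocity uniformly enough for reverse Fatou. The squared norm has the form $|a+b|^2$ with $a=D\Phi_s u^\tau_s$ and $b=\partial_s\Phi_s=O(h)$. The term $|b|^2=O(h^2)$ is bounded using $\chi_T$. The cross term $2a^\top\mathbf M b$ divided by $h$ must converge, which needs domination by a constant times $|u^\tau_s|\cdot\sqrt{\chi_T}$; this is integrable since $u^\tau\in L^2$ and $\boldsymbol\mu$ is a probability measure. I would first establish uniform bounds on $D\Phi$ and on $(\partial_s\Phi_s)/h$ over $[0,T]\times\Omega_{\mathcal C}$ using $\kappa_T$, $\chi_T$ and the bounds on $\mathbf M$. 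Pointwise convergence of $(\partial_s\Phi_s)/h\to D_{\mathbf u}f\,\eta_s$ then follows from continuity of the integrand.
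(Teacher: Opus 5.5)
Your proposal is correct and follows essentially the same route as the paper. Both use the transported curves $\mu^\tau_s=\mathsf S^{\bar{\mathbf u}_s}_{r,\tau}\mu_s$, Lemma~\ref{lemma:pushforward_admissible} for the $s$-dependent flow, reverse Fatou, the cIWCC together with $\langle u^\tau,D_{\mathbf u}f\,\eta\rangle=\langle u^\tau,P_{\mathcal H}(D_{\mathbf u}f\,\eta)\rangle$ and Cauchy--Schwarz, and finally Lemma~\ref{lemma:sqrt_dini_forcing}.

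The differences are only technical:
\begin{itemize}
\item The paper absorbs the cross term via $2\xi_h^\top\mathbf M b\le\xi_h^\top\mathbf M\xi_h+b^\top\mathbf M b$. This gives the single Gr\"onwall bound $\xi_h^\top\mathbf M\xi_h\le e^{\gamma_T h}\xi^\top\mathbf M\xi+C_T(h)\chi_T$ with $\gamma_T=\kappa_T-2\lambda+1$. That one bound supplies integrability, left lower semicontinuity and the reverse-Fatou majorant at once, in place of your Duhamel expansion.
\item The paper handles infinite values by replacing $\Gamma$ with $\Gamma\wedge\sqrt{\chi_T}$, which is legitimate because $\gamma_{\bar{\mathbf u}}(\tau;\nu_\cdot)\le\sqrt{\chi_T}$. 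This is the right way to carry out your ``redefine $\Gamma$ on a null set'' step: the Dini inequality must hold at every point, so the redefined value must still dominate $\Gamma_{\bar{\mathbf u},\mathcal C}$.
\item You only need a uniform $O(h)$ bound on $\partial_s\Phi_s$ plus pointwise convergence, not a uniform $o(h)$.
\end{itemize}
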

\begin{remark}
\label{remark:bounded_omega}
    By continuity, $\kappa_T, \chi_T <\infty$ is automatically satisfied if $\Omega_{\mathcal{C}} \subseteq \mathbb{R}^n$ is bounded.
\end{remark}
\begin{proof}
Since $\underline m I\preceq \mathbf M\preceq \overline m I$, it suffices to prove the weighted estimate
\begin{equation}
\begin{split}
\label{eq:desired_weighted_estimate}
W_{2,\mathcal C,\mathbf M_t} \left( \mathsf S^{\bar{\mathbf u}_0}_{r,t}\mu_0, \mathsf S^{\bar{\mathbf u}_1}_{r,t}\mu_1 \right) &\le e^{-\lambda(t-r)} W_{2,\mathcal C,\mathbf M_r}(\mu_0,\mu_1) \\&+ \int_r^t e^{-\lambda(t-\tau)}\Gamma(\tau)d\tau
\end{split}
\end{equation}
for all $\mu_0,\mu_1\in\mathcal C$ and $0\le r<t$. Fix $0\le r<t<T$. First, note that for every narrowly continuous curve $\nu_\cdot\subseteq\mathcal C$ and every $\tau\in[0,T]$ we have
\begin{equation}
\gamma_{\bar{\mathbf u}}(\tau;\nu_\cdot) \le\left\|D_{\mathbf u}f(\tau,\cdot,\bar{\mathbf u}_\cdot(\tau,\cdot))\eta_\cdot(\tau,\cdot) \right\|_{L^2_{\mathbf M_\tau}(\boldsymbol\nu)}  \le \sqrt{\chi_T}.
\end{equation}
Therefore, by replacing $\Gamma(\tau)$ with $\Gamma(\tau) \wedge \sqrt{\chi _T}$ if necessary, we may assume that $\Gamma<\infty$ on $[0,T]$.

Let a narrowly continuous curve $(\mu_s)_{s\in[0,1]}$ in $\mathcal C$ from $\mu_0$ to $\mu_1$ be given. For every $\tau\in[r,t]$ and $s \in [0,1]$, set $\mu_s^\tau \coloneqq \left(\Phi^{\bar{\mathbf u}_s}_{r,\tau}\right)_\#\mu_s$. By the proof of Lemma~\ref{lemma:pushforward_admissible}, $\mu^\tau_\cdot \subseteq\mathcal{C}$ is narrowly continuous. Define $A:[r,t]\to[0,\infty]$ by
\begin{equation}
A(\tau) \coloneqq \mathcal A_{\mathbf M_\tau}(\mu_\cdot^\tau).
\end{equation}
We aim to apply Lemma~\ref{lemma:sqrt_dini_forcing} to the function $A$, thereby proving
\begin{equation}
\label{eq:sqrtdini_lemma_result}
\sqrt{A(t)} \le e^{-\lambda(t-r)}\sqrt{A(r)} + \int_r^t e^{-\lambda(t-\tau)}\Gamma(\tau) d\tau.
\end{equation}
Note that we may assume $A(r) < \infty$.

Hereafter, we check the assumptions of Lemma~\ref{lemma:sqrt_dini_forcing}. We first prove an auxiliary estimate for the $s$-dependent flow. Fix $s \in [0,1]$, $\tau \in [0,T]$, $\mathbf{x}\in\Omega_{\mathcal C}$, and $\xi\in\mathbb R^n$. For $0 \le h\le T-\tau$, define
\begin{equation}
\begin{aligned}
\tau_h & \coloneqq \tau+h, \\
    \mathbf y_h & \coloneqq \Phi^{\bar{\mathbf u}_s}_{\tau,\tau+h}(\mathbf x), \\
    \xi_h &\coloneqq D_{\mathbf x}\Phi^{\bar{\mathbf u}_s}_{\tau,\tau+h}(\mathbf x)\xi + \partial_s\Phi^{\bar{\mathbf u}_s}_{\tau,\tau+h}(\mathbf x).
\end{aligned}
\end{equation}
Since $\mathcal C$ is forward invariant and the flow maps $\Phi^{\bar{\mathbf u}_s}_{\tau,\tau+h}$ are continuous, we have $\Phi^{\bar{\mathbf u}_s}_{\tau,\tau+h}(\Omega_{\mathcal C}) \subseteq \Omega_{\mathcal C}$ for all $s\in[0,1]$ and $0\le\tau\le\tau+h\le T$. In particular, $\mathbf{y}_h \in \Omega_{\mathcal{C}}$.

By the variational equations~\cite[Theorem~V.3.1]{Ha2002} of the $s$-parametric vector field $f(t,\mathbf{x},\bar{\mathbf u}(s, t, \mathbf{x})) = f^{\bar{\mathbf u}_s}(t,\mathbf{x})$, $\frac{d \xi_h}{dh} = D_\mathbf{x} f^{\bar{\mathbf{u}}_s} (\tau _h,\mathbf{y}_h) \xi_h + D_\mathbf{u} f (\tau_h, \mathbf{y}_h, \bar{\mathbf u}(s,\tau_h,\mathbf{y}_h))\partial_s \bar{\mathbf{u}}(s,\tau_h,\mathbf{y}_h) 
    =A^{\bar{\mathbf{u}}_s} (\tau_h, \mathbf{y}_h) \xi_h +  D_\mathbf{u} f (\tau_h, \mathbf{y}_h, \bar{\mathbf u}(s,\tau_h,\mathbf{y}_h))\eta_s(\tau_h,\mathbf y_h)$,
and therefore
\begin{equation}
\label{eq:xi_M_xi_derivative}
\begin{split}
    \frac d{dh}  \big[\xi_h^\top  \mathbf M(\tau_h,\mathbf y_h) \xi_h \big] &= \xi_h^\top \big(B_\lambda^{\bar{\mathbf u}_s}(\tau_h, \mathbf{y}_h)-2\lambda \mathbf M (\tau_h, \mathbf{y}_h) \big)\xi_h  \\
    + 2 \xi_h^\top \mathbf{M}(\tau_h, \mathbf{y}_h)  &D_\mathbf{u} f (\tau_h, \mathbf{y}_h, \bar{\mathbf u}(s,\tau_h,\mathbf{y}_h))\eta_s(\tau_h,\mathbf y_h)\\
    &\le   \gamma_T \xi_h^\top \mathbf M(\tau_h,\mathbf y_h) \xi_h +\chi_T, 
    \end{split}
\end{equation}
where $\gamma_T \coloneqq \kappa_T - 2 \lambda +1$. Hence, we conclude that
\begin{equation}
\label{eq:final_growth_bound}
    \xi_h^\top  \mathbf M(\tau_h,\mathbf y_h) \xi_h \le e^{\gamma_T h} \xi^\top \mathbf{M}(\tau,\mathbf{x}) \xi + C_T(h) \chi_T,
\end{equation}
where $C_T(h) = (e^{\gamma_T h} -1)/\gamma_T$ if $\gamma_T \neq 0$ and $C_T(h) = h$ otherwise.

Now we verify that $A$ is finite and lower semicontinuous from the left. Let $\tau \in (r,t]$. For each $0<h \le \tau -r$, if $A(\tau-h) < \infty$, then by Lemma~\ref{lemma:vertical_horizontal_decomp} there exists an element $v^{\tau-h}\in\mathcal H_{\mu^{\tau-h}_\cdot}(\tau-h)$ which attains the infimum for $A(\tau-h)$. Applying \eqref{eq:final_growth_bound} with $\tau  \leftarrow \tau-h$, $\tau+h \leftarrow \tau$, and $\xi \leftarrow v_s^{\tau-h}(\mathbf{x})$, we check that the integrability condition of Lemma~\ref{lemma:pushforward_admissible} is satisfied. Therefore, by pushing forward $v^{\tau-h}$ by \eqref{eq:pushforwarded_velocity_field}, we obtain a velocity field which pairs with $\mu^\tau _\cdot$ to form an admissible pair. Using this vector field as a competitor in \eqref{eq:weighted_action}, then changing variables using $\mu^\tau_s = \mathsf{S}^{\bar{\mathbf u}_s}_{\tau-h,\tau} \mu_s^{\tau-h}$, then applying \eqref{eq:final_growth_bound}, and then using the definition of $v^{\tau-h}$, we obtain \begin{equation}
\label{eq:bound_for_lsc}
    A(\tau) \le e^{\gamma_T h} A(\tau-h)+ C_T(h) \chi_T.
\end{equation}
Note that this inequality holds trivially if we had $A(\tau-h) =\infty$. Taking $h=\tau-r$ in \eqref{eq:bound_for_lsc} gives
\begin{equation}
A(\tau)\le e^{\gamma_T(\tau-r)}A(r)+ C_T(\tau-r) \chi_T <\infty.
\end{equation}
Thus $A$ is finite on $[r,t]$. Moreover, since $e^{\gamma_T h}\to1$ and $C_T(h)\to0$ as $h\to 0+$, letting $h \to 0+$ in \eqref{eq:bound_for_lsc} gives $A(\tau) \le \liminf_{h\to 0+}A(\tau-h)$. Hence $A$ is lower semicontinuous from the left.

It remains to prove $D^+ A \le -2 \lambda A  + 2 \sqrt{A} \Gamma$ on $[r,t)$. Fix $\tau\in[r,t)$, and let $v^\tau\in\mathcal H_{\mu_\cdot^\tau}(\tau)$ be the velocity field achieving $A(\tau)$. For $0<h\le t-\tau$, set
\begin{equation*}
Z_{s,h}(\mathbf x)\coloneqq D_{\mathbf x}\Phi^{\bar{\mathbf u}_s}_{\tau,\tau+h}(\mathbf x)v_s^\tau(\mathbf x)+\partial_s\Phi^{\bar{\mathbf u}_s}_{\tau,\tau+h}(\mathbf x).
\end{equation*}
Applying \eqref{eq:final_growth_bound} with $\xi \leftarrow v^\tau_s(\mathbf{x})$, the integrability condition of Lemma~\ref{lemma:pushforward_admissible} is verified. Therefore, the velocity field obtained by pushing $v^\tau$ forward by \eqref{eq:pushforwarded_velocity_field} pairs with $\mu_\cdot^{\tau+h}$ to form an admissible pair. Using this velocity field as a competitor in \eqref{eq:weighted_action}, and changing variables using $\mu_s^{\tau+h}= \mathsf{S}^{\bar{\mathbf u}_s}_{\tau,\tau+h}\mu_s^\tau$, we obtain $A(\tau +h) \le \int_0^1  \!\! \int_{\mathbb{R}^n} Z_{s,h} (\mathbf{x})^\top  \mathbf{M}(\tau+h, \Phi^{\bar{\mathbf u}_s}_{\tau,\tau+h} (\mathbf{x})) Z_{s,h}(\mathbf{x}) d \mu^\tau_s (\mathbf{x})ds$. Hence,
\begin{equation}
\begin{split}
\frac{A(\tau_h)-A(\tau)}{h} \!\le\! \int_0^1 \!\!\int_{\mathbb R^n}\!&\frac 1 h \Bigl[Z_{s,h}(\mathbf x)^\top \mathbf M\bigl(\tau_h,\Phi^{\bar{\mathbf u}_s}_{\tau,\tau_h}(\mathbf x)\bigr) Z_{s,h}(\mathbf x)\\
&-v_s^\tau(\mathbf x)^\top\mathbf M(\tau,\mathbf x)v_s^\tau(\mathbf x)\Bigr] d\mu_s^\tau(\mathbf x)ds,
\end{split}
\end{equation}
where $\tau_h$ denotes $\tau+h$. By \eqref{eq:final_growth_bound}, the integrand is $d\mu_s^\tau(\mathbf x)ds$-a.e. bounded above by
\begin{equation}
e^{|\gamma_T|T} \left[ |\gamma_T| v_s^\tau(\mathbf x)^\top \mathbf M(\tau,\mathbf x)v_s^\tau(\mathbf x)+\chi_T \right],
\end{equation}
which is $d\mu_s^\tau(\mathbf x)ds$-integrable.
Therefore, we may apply the reverse Fatou lemma with \eqref{eq:xi_M_xi_derivative} to conclude that
\begin{equation}
\begin{split}
D^+A(\tau)\!&\le\! \int_0^1\!\!\int_{\mathbb R^n}\!\!v_s^\tau(\mathbf x)^\top
\!\!\left[B_\lambda^{\bar{\mathbf u}_s}(\tau,\mathbf x)-2\lambda\mathbf M\right]v_s^\tau(\mathbf x)d\mu_s^\tau(\mathbf x)ds\\
&+ 2\int_0^1\!\!\int_{\mathbb R^n} v_s^\tau(\mathbf x)^\top \mathbf M D_{\mathbf u}f \eta_s(\tau,\mathbf x) d\mu_s^\tau(\mathbf x)ds,
\end{split}
\end{equation}
where $D_\mathbf{u} f =D_{\mathbf u}f \left(\tau,\mathbf x,\bar{\mathbf u}_s(\tau,\mathbf x) \right)$ and $\mathbf{M}=\mathbf{M}(\tau,\mathbf{x})$. The first integral is no greater than $-2 \lambda A(\tau)$ by the cIWCC. Moreover, since $v^\tau\in\mathcal H_{\mu_\cdot^\tau}(\tau)$, the second integral equals $\langle v^\tau, D_u f \, \eta_s \rangle _{L^2_{\mathbf M_\tau(\boldsymbol\mu^\tau)}}=\langle v^\tau, P_{\mathcal H_{\mu_\cdot^\tau}(\tau)} (D_u f \, \eta_s ) \rangle_{L^2_{\mathbf M_\tau(\boldsymbol\mu^\tau)}}$, hence is no greater than $\sqrt{A(\tau)} \Gamma(\tau)$ by Cauchy--Schwarz. This proves the desired Dini derivative bound.

Hence, by Lemma~\ref{lemma:sqrt_dini_forcing}, we conclude that \eqref{eq:sqrtdini_lemma_result} holds. Therefore, we have $W_{2,\mathcal C,\mathbf M_t} \left( \mathsf S^{\bar{\mathbf u}_0}_{r,t}\mu_0, \mathsf S^{\bar{\mathbf u}_1}_{r,t}\mu_1 \right) \le \sqrt{A(t)} \le e^{-\lambda(t-r)}\sqrt{A(r)} + \int_r^t e^{-\lambda(t-\tau)}\Gamma(\tau) d\tau$.
Taking the infimum over all narrowly continuous curves $\mu_\cdot\subseteq\mathcal C$ from $\mu_0$ to $\mu_1$ proves the desired weighted estimate \eqref{eq:desired_weighted_estimate}, hence proving the theorem.
\end{proof}

\begin{theorem}[cIWCC, pointwise version]
\label{thm:pointwise_control_iwcc}
Let the assumptions of Theorem~\ref{thm:iss_contraction} hold except for the $s$-integrated cIWCC. Suppose that for every $t\ge 0$, for a.e. $s\in[0,1]$, every $\mu\in\mathcal C$, and every $v\in\operatorname{HorTan}_\mu(t,\mathcal C)$,
\begin{equation}
\label{eq:pointwise_control_iwcc}
    \int_{\mathbb R^n} v(\mathbf{x})^\top B_\lambda^{\bar{\mathbf u}_s}(t,\mathbf{x})v(\mathbf{x}) d\mu(\mathbf{x}) \le 0.
\end{equation}
Here, the $ds$-null set may depend on $t$ but not on $\mu$ or $v$.
Then~\eqref{eq:integrated_control_iwcc} holds, and therefore the conclusions of Theorem~\ref{thm:iss_contraction} also hold.
\end{theorem}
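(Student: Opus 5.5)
The argument mirrors the proof of Theorem~\ref{thm:pointwise_iwcc}. The only new feature is the $s$-dependence of the quadratic form $B_\lambda^{\bar{\mathbf u}_s}$, which is why we need the exceptional-set convention.

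Fix $t\ge0$. Let $(\mu_s,v_s)_{s\in[0,1]}$ be an admissible pair with $\mu_\cdot\subseteq\mathcal C$ and $v\in\mathcal H_{\mu_\cdot}(t)$.

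First, apply Lemma~\ref{lemma:horizontal_slicing}. It gives a $ds$-null set $N_1\subseteq[0,1]$ such that $v_s\in\operatorname{HorTan}_{\mu_s}(t,\mathcal C)$ for all $s\notin N_1$. The lemma uses only the admissibility of the pair, $\mu_s\in\mathcal C$, and $v\in\mathcal H_{\mu_\cdot}(t)$, so it applies verbatim here.

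Second, let $N_2$ be the $ds$-null set from the hypothesis. It depends on $t$ only, not on $\mu$ or $v$. For $s\notin N_1\cup N_2$ we may therefore apply \eqref{eq:pointwise_control_iwcc} with the pair $(\mu,v)=(\mu_s,v_s)$. Since $N_2$ was chosen before $\mu_s$ and $v_s$, the varying pair causes no problem; this is exactly why the uniformity of the null set is stated. We obtain $\int_{\mathbb R^n} v_s^\top B_\lambda^{\bar{\mathbf u}_s}(t,\cdot)\,v_s\,d\mu_s\le0$ for a.e.\ $s$.

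Third, integrate in $s$ to get \eqref{eq:integrated_control_iwcc}. This step needs care, because we must check that the $s$-integral is well-defined. The map $(s,\mathbf x)\mapsto v_s(\mathbf x)^\top B_\lambda^{\bar{\mathbf u}_s}(t,\mathbf x)v_s(\mathbf x)$ is Borel, since $B^{\bar{\mathbf u}_s}_\lambda$ is jointly continuous. Its positive part is bounded by $\kappa_T\,v_s^\top\mathbf M_t v_s$ for any $T\ge t$, using the definition of $\kappa_T$ and the fact that $\mu_s$ is concentrated on $\Omega_{\mathcal C}$. This bound is $\boldsymbol\mu$-integrable because $v\in L^2_{\mathbf M_t}(\boldsymbol\mu)$. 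Hence the double integral is well-defined in $[-\infty,\infty)$ and equals the iterated integral by the remark following Lemma~\ref{lemma:vertical_horizontal_decomp}. Since the inner integrals are $\le0$ for a.e.\ $s$, the double integral is $\le0$.

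Finally, all other hypotheses of Theorem~\ref{thm:iss_contraction} are assumed, so its conclusions follow. I expect no real obstacle. The only delicate points are the integrability check in the third step and the order of quantifiers for the null set $N_2$.
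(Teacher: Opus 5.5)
Your proposal is correct and follows the paper's proof: you slice via Lemma~\ref{lemma:horizontal_slicing}, apply \eqref{eq:pointwise_control_iwcc} to $(\mu_s,v_s)$ off the $t$-dependent null set, and integrate in $s$. Your extra integrability check, which the paper leaves implicit, is sound once the bound on the positive part is written as $\max\{\kappa_T,0\}\,v_s^\top\mathbf M_t v_s$ rather than $\kappa_T\,v_s^\top\mathbf M_t v_s$, since the latter fails when $\kappa_T<0$.
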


\begin{proof}
Fix $t \ge 0$. Let $(\mu_\cdot,v_\cdot)$ be an admissible pair with $\mu_\cdot\subseteq\mathcal C$ and $v\in\mathcal H_{\mu_\cdot}(t)$. By Lemma~\ref{lemma:horizontal_slicing}, $v_s\in\operatorname{HorTan}_{\mu_s}(t,\mathcal C)$ for a.e. $s \in [0,1]$. Applying~\eqref{eq:pointwise_control_iwcc} to $(\mu_s,v_s)$ and integrating in $s$ gives~\eqref{eq:integrated_control_iwcc}.
\end{proof}

\subsection{Brockett's integrator: Stabilization of probability measures on an axisymmetric Wasserstein class}
In this section, we present an example for which the theory from Section~\ref{subsec:wasserstein_iss} is applied. Consider Brockett's integrator~\cite{Br1983} $\dot{\mathbf{x}}=G(\mathbf{x})\mathbf{u}$, where $\mathbf{x}=(x,y,z)$ and
\begin{equation*} 
G(\mathbf{x})= \begin{bmatrix}
    1 & 0\\
    0 & 1\\
    -y & x
\end{bmatrix},
\end{equation*}
or $\dot x=u_1$, $\dot y=u_2$, $\dot z=xu_2-yu_1$. We denote $r=\sqrt{x^2+y^2}$.

Fix $0<r_0<R<r_1$, $Z>0$, and $\alpha,\gamma>0$, and set $K=\{\mathbf x\in\mathbb R^3:r_0\le r\le r_1,\ |z|\le Z\}$.
Let $\mathcal C_{\rm ax}\subseteq \mathcal P_2(\mathbb R^3)$ be the set of all $\mathrm{SO}(2)$-invariant probability measures supported in $K$. Let the target law be $\mu_\star = \operatorname{Unif}(R\mathbb S^1)$, the unique rotation-invariant Borel probability measure on the circle $R\mathbb{S}^1 \coloneqq \{(x,y,z): x^2+y^2 = R^2, z=0\}$. We have $\mu_\star\in\mathcal C_{\rm ax}$.

Fix $\vartheta\in C_c^\infty((0,\infty))$ with $\vartheta=1$ on $[r_0 /2, 2r_1]$. For each $q \in (r_0, r_1)$, define the smooth feedback
\begin{equation}
\mathbf k_q(\mathbf x) \coloneqq \vartheta(r) \pi_{12}\left(-\alpha(r-q)\mathbf e_r-\frac{\gamma z}{r}\mathbf e_\theta\right),
\end{equation}
where $\pi_{12}(x,y,z)\coloneqq (x,y)$.
On $K$, the closed-loop vector field $F_q (\mathbf{x}) \coloneqq G(\mathbf{x})\mathbf{k}_q (\mathbf{x})$ is given by $F_q = -\alpha(r-q)\mathbf e_r-\frac{\gamma z}{r}\mathbf e_\theta-\gamma z\mathbf e_z$, where $\mathbf e_r, \mathbf e_\theta, \mathbf e_z$ are the unit vector fields for the cylindrical coordinate system. Therefore, the closed-loop dynamics on $K$ are given in cylindrical coordinates as
\begin{equation}
\label{eq:brockett_closed_loop_cylindrical}
\dot r=-\alpha(r-q), \quad \dot\theta=-\frac{\gamma z}{r^2}, \quad \dot z=-\gamma z.
\end{equation}
Note that for each $q \in (r_0, r_1)$, $F_q$ is forward complete on $\mathbb{R}^3$ because we have $|F_q(\mathbf{x})| \le C_q(1+|\mathbf{x}|)$ for some $C_q>0$. Note also that $\mu_\star$ is an equilibrium under the control field $\mathbf k_{R}$.
\begin{claim}
For every $\mu, \nu \in \mathcal{C}_{\rm ax}$, we have $W_{2, \mathcal{C}_{\rm ax}} (\mu,\nu) < \infty$.
\end{claim}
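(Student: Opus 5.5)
Since $W_{2,\mathcal C_{\rm ax}}$ is an extended metric (Proposition~\ref{prop:extended_metric}) and so satisfies the triangle inequality, it suffices to join every $\mu\in\mathcal C_{\rm ax}$ to the fixed law $\mu_\star$ by a curve inside $\mathcal C_{\rm ax}$ that has finite action. We take $\mathbf M\equiv I$ throughout. We want a curve that preserves $\mathrm{SO}(2)$-invariance and stays in $K$. The natural choice is to push $\mu$ forward by equivariant maps that act only on $(r,z)$ and leave $\theta$ unchanged.

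Concretely, we set $\Psi_s(r,\theta,z)\coloneqq\big((1-s)r+sR,\ \theta,\ (1-s)z\big)$ for $s\in[0,1]$ and $\mu_s\coloneqq(\Psi_s)_\#\mu$. Each $\Psi_s$ commutes with rotations about the $z$-axis, so $\mu_s$ is $\mathrm{SO}(2)$-invariant. The set $K$ is a product of intervals in $r$ and $z$, and $R$ and $0$ lie in those intervals, so the convex combinations keep $\operatorname{supp}\mu_s\subseteq K$. Hence $\mu_\cdot\subseteq\mathcal C_{\rm ax}$. We also have $\mu_0=\mu$, and $\mu_1$ is the pushforward of $\mu$ under $(r,\theta,z)\mapsto(R,\theta,0)$. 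By rotation invariance of $\mu$, the angular marginal of $\mu$ is uniform. Therefore $\mu_1$ is rotation invariant and supported on $R\mathbb S^1$, so $\mu_1=\mu_\star$ by uniqueness.

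Next we must exhibit a velocity field. On $\{r>0\}\supseteq K$ the map $\Psi$ is $C^\infty$ in $(s,\mathbf x)$, with $\partial_s\Psi_s=(R-r)\mathbf e_r-z\mathbf e_z$ evaluated at the preimage. To apply Lemma~\ref{lemma:pushforward_admissible} we need a $C^1$ map on all of $\mathbb R^3$ that is injective for each $s$. We will modify $\Psi$ outside a neighborhood of $K$; alternatively, we will verify the continuity equation directly by the same test-function computation used in the proof of that lemma. The direct route works because only the values on $K$ matter: $\mu$ is concentrated there, and $\Psi_s$ is injective on $K$ since $r\mapsto(1-s)r+sR$ is injective in $r$. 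For $s=1$ the map is not injective. We will handle this endpoint either by running the curve on $[0,1)$ and closing it with the narrow limit, which is harmless since the weak continuity equation only involves the open interval, or simply by using the direct test-function verification, which does not need injectivity.

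The resulting Lagrangian velocity is $(R-r)\mathbf e_r-z\mathbf e_z$ evaluated along the trajectory. This gives the bound
\[
\int_0^1\!\!\int|v_s|^2d\mu_sds\le\int_0^1\!\!\int_K\big((R-r)^2+z^2\big)d\mu\,ds\le (r_1-r_0)^2+Z^2<\infty.
\]
Strictly speaking the Eulerian field is the conditional expectation of this quantity, but by Jensen its norm is no larger. We can avoid conditional expectations altogether by defining $w_s(\mathbf y)$ explicitly, which is possible because $\Psi_s$ is injective on $K$ for $s<1$. This bounds the action, so $W_{2,\mathcal C_{\rm ax}}(\mu,\mu_\star)\le\sqrt{(r_1-r_0)^2+Z^2}$, and the triangle inequality finishes the argument.

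The main obstacle is the rigorous admissibility step: meeting the global injectivity and $C^1$ hypotheses of Lemma~\ref{lemma:pushforward_admissible} near $r=0$ and at $s=1$. We plan to sidestep this by repeating that lemma's chain-rule argument directly with test functions $\psi(s,\Psi_s(\mathbf x))$, which is legitimate since $\Psi$ is smooth on a neighborhood of $[0,1]\times K$.
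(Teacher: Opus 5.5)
Your proposal is correct and follows essentially the same route as the paper. Both use the equivariant map $((1-s)r+sR,\theta,(1-s)z)$ and the explicit Eulerian field $\frac{R-r}{1-s}\mathbf e_r-\frac{z}{1-s}\mathbf e_z$ obtained by inverting that map on $K$ for $s<1$. Both verify the continuity equation directly by differentiating $s\mapsto\int\varphi(s,T_s(\mathbf x))\,d\mu(\mathbf x)$ instead of invoking Lemma~\ref{lemma:pushforward_admissible}, bound the action by $\int_K\big((R-r)^2+z^2\big)\,d\mu$, and finish with the triangle inequality. The only step you leave implicit is narrow continuity of $s\mapsto\mu_s$, which follows at once from continuity of $(s,\mathbf x)\mapsto T_s(\mathbf x)$ on $[0,1]\times K$.
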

\begin{proof}
Let $\nu\in\mathcal C_{\rm ax}$. We explicitly construct an appropriate curve from $\nu$ to $\mu_\star$. For each $s \in [0,1]$, define the map $T_s(r,\theta,z)\coloneqq \bigl((1-s)r+sR,\theta,(1-s)z\bigr)$ in cylindrical coordinates with $T_s \equiv \mathrm{id}$ on the $z$-axis. Let $\nu_s\coloneqq (T_s)_\#\nu$. Since each $T_s$ commutes with every element of $\mathrm{SO}(2)$ and $T_s(K) \subseteq K$, we have $\nu_\cdot \subseteq \mathcal C_{\rm ax}$. Moreover, as $(s,\mathbf{x})\mapsto T_s(\mathbf{x})$ is continuous on $[0,1] \times K$, $s \mapsto \nu_s$ is narrowly continuous. We have $\nu_0=\nu$ and $\nu_1=\mu_\star$.

Define a Borel vector field $w$ on $[0,1]\times\mathbb{R}^3$ by $w_s(r,\theta,z) \coloneqq \frac{R-r}{1-s}\mathbf e_r -\frac{z}{1-s}\mathbf e_z$ with $w_1 \equiv 0$, $w_s(0,\theta, z) \equiv 0$. Let $\varphi\in C_c^\infty((0,1)\times\mathbb R^3)$. For $s \in [0,1]$, define $H(s) \coloneqq \int_{\mathbb R^3}\varphi(s,\mathbf y)d\nu_s(\mathbf y) = \int_K\varphi\left(s,T_s(\mathbf x)\right)d\nu(\mathbf x)$. Since we have $w_s (T_s(\mathbf x)) = \partial_s T_s(\mathbf x)$ on $[0,1) \times K$, the following holds on $[0,1)$:    
\begin{align*}
H'(s)\!&=\!\int_K\left[\partial_s\varphi\bigl(s,T_s(\mathbf x)\bigr) + \nabla\varphi\bigl(s,T_s(\mathbf x)\bigr) \cdot\partial_sT_s(\mathbf x) \right] d\nu(\mathbf x)\\
\!&=\!\int_{\mathbb R^3} \left[\partial_s\varphi(s,\mathbf y)+\nabla\varphi(s,\mathbf y)\cdot w_s(\mathbf y)\right]d\nu_s(\mathbf y).
\end{align*}
The first equality above holds also at $s=1$ and proves, by dominated convergence, that $H \in C^1([0,1])$. Hence, $\int_0^1 H'(s)ds = H(1)-H(0)=0$, so the continuity equation is satisfied. Finally, for $s \in [0,1)$, $\|w_s\|_{L^2(\nu_s)} ^2 = \int _K \left((R-r)^2+(-z)^2 \right)d \nu(\mathbf{x})<\infty$. Therefore, the pair $(\nu_\cdot, w_\cdot)$ is admissible, and $W_{2,\mathcal{C}_{\rm ax}} ^2 (\nu, \mu_\star) \le \int_0^1\!\!\int_{\mathbb R^3}|w_s|^2 d\nu_sds < \infty$. The claim follows from the triangle inequality (see Proposition~\ref{prop:extended_metric}).
\end{proof}
\begin{claim}
\label{claim:exponential_bound_brockett}
For every $t \ge t_0 \ge 0$, $R_0,R_1 \in (r_0,r_1)$, $\nu_0,\nu_1 \in \mathcal C_{\rm ax}$, and $0\neq\lambda \le \min\{\alpha,\gamma\}$,
\begin{equation}
\begin{split}
W_{2,\mathcal C_{\rm ax}}\left(\mathsf S^{\mathbf k_{R_0}}_{t_0,t}\nu_0, \mathsf S^{\mathbf k_{R_1}}_{t_0,t} \nu_1 \right) &\le e^{-\lambda(t-t_0)} W_{2,\mathcal C_{\rm ax}}(\nu_0,\nu_1)\\
&+ \frac{\alpha}{\lambda}\left(1-e^{-\lambda (t - t_0)}\right)|R_1-R_0|.
\end{split}
\end{equation}
In particular, for every $\nu \in \mathcal C_{\rm ax}$,
\begin{equation}
W_{2,\mathcal C_{\rm ax}}\left(\mathsf S^{\mathbf k_R}_{t_0,t}\nu,\mu_\star \right) \le e^{-\lambda(t-t_0)} W_{2,\mathcal C_{\rm ax}}(\nu,\mu_\star)\text{, } t\ge t_0 \ge 0.
\end{equation}
Note that from $W_2 \le W_{2,\mathcal{C}_{\rm ax}}$, we also obtain upper bounds for the $W_2$ distances.
\end{claim}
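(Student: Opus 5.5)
I will apply Theorem~\ref{thm:iss_contraction} (through its pointwise form, Theorem~\ref{thm:pointwise_control_iwcc}) with $\mathbf M\equiv I$, so that $\underline m=\overline m=1$, on $\mathcal C=\mathcal C_{\rm ax}$. The control homotopy is $\bar{\mathbf u}_s\coloneqq\mathbf k_{(1-s)R_0+sR_1}$. Since $\mathbf k_q$ depends affinely on $q$, we get
\[
\eta_s=\partial_s\bar{\mathbf u}_s=\alpha(R_1-R_0)\vartheta(r)\pi_{12}\mathbf e_r .
\]
On $K$ this gives $D_{\mathbf u}f\,\eta_s=G(\mathbf x)\eta_s=\alpha(R_1-R_0)\mathbf e_r$. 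The third component is $-y\cdot\alpha(R_1-R_0)x/r+x\cdot\alpha(R_1-R_0)y/r=0$.

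\textbf{Hypotheses of Theorem~\ref{thm:iss_contraction}.} Every $q_s\in[R_0,R_1]\subseteq(r_0,r_1)$, so each $F_{q_s}$ is forward complete. $K$ is forward invariant under \eqref{eq:brockett_closed_loop_cylindrical}: $r$ relaxes monotonically toward $q_s\in(r_0,r_1)$ and $|z|$ decreases. Each $F_q$ commutes with rotations about the $z$-axis, so its flow does too. Hence $\mathcal C_{\rm ax}$ is forward invariant under every $\mathsf S^{\bar{\mathbf u}_s}$. The quantities $\kappa_T$ and $\chi_T$ are finite by Remark~\ref{remark:bounded_omega}, because $\Omega_{\mathcal C}\subseteq K$ is bounded.

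\textbf{Forcing bound.} The homotopy forcing is bounded by the full norm of $D_{\mathbf u}f\,\eta$, since projection is a contraction:
\[
\Gamma_{\bar{\mathbf u},\mathcal C}(t)\le\|\alpha(R_1-R_0)\mathbf e_r\|_{L^2(\boldsymbol\mu)}=\alpha|R_1-R_0|.
\]
So I take $\Gamma\equiv\alpha|R_1-R_0|$. Then $\int_{t_0}^te^{-\lambda(t-\tau)}\Gamma\,d\tau=\frac{\alpha}{\lambda}(1-e^{-\lambda(t-t_0)})|R_1-R_0|$, valid for $\lambda\neq0$, which matches the claim.

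\textbf{Pointwise cIWCC (main step).} Fix $\mu\in\mathcal C_{\rm ax}$ and $v\in\operatorname{HorTan}_\mu(t,\mathcal C_{\rm ax})$. Apply Proposition~\ref{prop:symmetry_reduction_hortan} with $G=\mathrm{SO}(2)$ acting by rotations about the $z$-axis; the metric $I$ is invariant. Then $v$ is an $L^2(\mu)$-limit of gradients of rotation-invariant $\psi_k$. As in Claim~\ref{claim:hortan_is_good_def_2}, this gives $v\cdot\mathbf e_\theta=0$ $\mu$-a.s., so $v=a\,\mathbf e_r+b\,\mathbf e_z$.

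I then compute $v^\top(DF_q^\top+DF_q+2\lambda I)v$ on $K$ directly from $F_q=-\alpha(r-q)\mathbf e_r-\frac{\gamma z}{r}\mathbf e_\theta-\gamma z\mathbf e_z$. The gradient of the radial part $-\alpha(r-q)\mathbf e_r$ has $\mathbf e_r$-component $-\alpha$ in the $\mathbf e_r$ direction. The $z$-part contributes $-\gamma$ in the $\mathbf e_z$ direction. The angular term $-\gamma z\mathbf e_\theta/r$ gives $\mathbf e_r\!\cdot\!\partial_r$, $\mathbf e_r\!\cdot\!\partial_z$, $\mathbf e_z\!\cdot\!\partial_r$ and $\mathbf e_z\!\cdot\!\partial_z$ entries all zero, since its only value is along $\mathbf e_\theta$ and $v$ has no $\mathbf e_\theta$ component. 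Hence
\[
v^\top DF_q v=-\alpha a^2-\gamma b^2,\qquad v^\top B^{\bar{\mathbf u}_s}_\lambda v=2(\lambda-\alpha)a^2+2(\lambda-\gamma)b^2\le0 .
\]
This holds for all $s$, since the expression is independent of $q$. The main care here is the Jacobian in cylindrical frames, specifically checking that $\partial_r\mathbf e_r$ and $\partial_z$ terms produce no cross term with $v$ in the $(\mathbf e_r,\mathbf e_z)$ block; I would verify this in Cartesian coordinates as in the motivating example.

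\textbf{Conclusion.} Theorem~\ref{thm:pointwise_control_iwcc} now yields the first inequality. The second follows by taking $R_0=R_1=R$, $\nu_1=\mu_\star$ and $\nu_0=\nu$. Then $\Gamma\equiv0$, and the invariance $\mathsf S^{\mathbf k_R}_{t_0,t}\mu_\star=\mu_\star$ holds because $\mu_\star$ is an equilibrium.
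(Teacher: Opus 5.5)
Your proposal is correct and follows the paper's proof essentially step for step. You use the same homotopy $\bar{\mathbf u}_s=\mathbf k_{(1-s)R_0+sR_1}$ with $\mathbf M\equiv I$, and the same application of Proposition~\ref{prop:symmetry_reduction_hortan} to remove the $\mathbf e_\theta$ component, so that $v^\top B_\lambda^{\bar{\mathbf u}_s}v=2(\lambda-\alpha)v_r^2+2(\lambda-\gamma)v_z^2\le 0$. The precise reason the angular term contributes nothing is that $\partial_r\mathbf e_\theta=\partial_z\mathbf e_\theta=0$, which is what your remark about that term amounts to. You then feed the same forcing bound $\Gamma\equiv\alpha|R_1-R_0|$ into Theorem~\ref{thm:pointwise_control_iwcc}. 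Your explicit derivation of the second inequality (taking $R_0=R_1=R$ and $\nu_1=\mu_\star$, and using that $\mu_\star$ is an equilibrium under $\mathbf k_R$) just spells out what the paper leaves implicit.
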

\begin{proof}
By \eqref{eq:brockett_closed_loop_cylindrical}, $K$ is forward invariant under the closed-loop flow $\Phi^{\mathbf k_q}_{r, t}$ and $\Phi^{\mathbf k_q}_{r, t} \circ R = R \circ \Phi^{\mathbf k_q}_{r, t}$ on $K$ for all $R \in \mathrm{SO}(2)$, $0 \le r \le t$, and $q \in (r_0,r_1)$. Hence, $\mathcal{C}_{\rm ax}$ is forward invariant under each $\mathsf S^{\mathbf k_q}_{r,t}$. Set $R_s\coloneqq (1-s)R_0+sR_1$ and consider the control homotopy $\bar{\mathbf u}_s(t,\mathbf x)=\mathbf k_{R_s}(\mathbf x)$ which is jointly smooth in $(s,t,\mathbf{x})$. Let $\mathbf{M} \equiv I$. We aim to verify the pointwise cIWCC. Fix $t \ge 0$, $\mu\in\mathcal C_{\rm ax}$, and let $s\in[0,1]$, $v\in\operatorname{HorTan}_\mu(t,\mathcal C_{\rm ax})$ be given. Proposition~\ref{prop:symmetry_reduction_hortan} shows that $v$ lies in the $L^2(\mu)$-closure of gradients of $\mathrm{SO}(2)$-invariant $C_c^\infty(\mathbb{R}^3)$ functions, say $\nabla \psi$. Therefore, we have a convergent sequence $\nabla \psi_n \to v$ in $L^2(\mu)$. Since each $\psi_n$ is $\mathrm{SO}(2)$-invariant, $\nabla \psi_n (\mathbf{x}) \cdot \mathbf e_\theta (\mathbf{x}) = 0$ for all $\mathbf{x} \in \mathbb{R}^3 \setminus \{r=0\}$. Therefore, we have $v\cdot\mathbf e_\theta=0$ $\mu$-a.s. Hence, $v =(v\cdot\mathbf e_r) \mathbf e_r + (v\cdot\mathbf e_z) \mathbf e_z$ $\mu$-a.s. Define $v_r \coloneqq v \cdot \mathbf e_r$ and $v_z \coloneqq v \cdot \mathbf e_z$. As $D_\mathbf{x}F_{R_s}(\mathbf{x}) v = -\alpha v_r \mathbf e_r +(\frac{\gamma z}{r^2} v_r - \frac \gamma r v_z)\mathbf e_\theta - \gamma v_z \mathbf e _z$ for $\mu$-a.s. $\mathbf{x} \in K$, we conclude that
\begin{equation*}
\begin{split}
v^\top B_\lambda^{\bar{\mathbf u}_s}v &= 2v^\top D_{\mathbf{x}}F_{R_s} (\mathbf{x})v + 2\lambda \|v\|^2 \\
&= 2(\lambda-\alpha)v_r^2+2(\lambda-\gamma)v_z^2 \le 0\text{, }\mu\text{-a.s.}
\end{split}
\end{equation*}
Integrating with respect to $\mu$ verifies the pointwise cIWCC in Theorem~\ref{thm:pointwise_control_iwcc}.

Fix any $t \ge 0$. On $K$, we have $\eta=\partial_s\bar{\mathbf u} =\alpha(R_1-R_0)\pi_{12}(\mathbf e_r)$ and $(D_{\mathbf u}f)\eta =\alpha(R_1-R_0)\mathbf e_r$. Since orthogonal projection does not increase the norm, $\Gamma_{\bar{\mathbf u}, \mathcal C_{\rm ax}}(t) \le \alpha |R_1-R_0|$. Finally, note that $\Omega_{\mathcal C_{\rm ax}}\subseteq K$, so by Remark~\ref{remark:bounded_omega}, we can apply Theorem~\ref{thm:pointwise_control_iwcc} with $\Gamma(t) \equiv \alpha |R_1 -R_0|$. This proves the claim.
\end{proof}

\begin{remark}
If $\lambda>0$ in Claim~\ref{claim:exponential_bound_brockett}, we achieve exponential convergence to $\mu_\star$ in the $W_2$ distance.
This does not contradict Brockett's necessary condition~\cite{Br1983} because the target is not the point state $0$ but the equilibrium law $\mu_\star$ supported on the circle $\{r=R, z=0\}$.
\end{remark}

\section{Conclusion}
We developed a differential contraction theory for pushforward dynamics on forward-invariant subsets $\mathcal C$ of the $2$-Wasserstein space. The central geometric object is the weighted horizontal tangent cone (whtc), which combines two features: velocity fields that are invisible to the probability distribution are quotiented out by choosing a minimum-energy representative, and the remaining law movements are restricted to those tangent to $\mathcal C$. Requiring the contraction quadratic form to be nonpositive only on the whtc gives the Intrinsic Wasserstein Contraction Condition (IWCC), which is a sufficient condition for contraction in the intrinsic Wasserstein distance. The condition reduces to classical contraction when $\mathcal{C}$ is the class of Dirac measures. The rotationally symmetric shear illustrates that the distributional dynamics can be contractive even when the point-state dynamics is not contractive, due to the existence of nontrivial infinitesimal stabilizers of probability distributions.

An analytical difficulty arises from our admissible curves being merely absolutely continuous and lying in a possibly nonsmooth subset of the Wasserstein space for which classical tangent spaces are unavailable. We overcame this difficulty by proving an almost-everywhere Euler approximation using the superposition principle for the continuity equation and the Banach-valued Lebesgue differentiation theorem, thereby obtaining the pointwise formulation of the IWCC in terms of Wasserstein contingent cones.

For controlled systems, the control IWCC (cIWCC) yields an incremental Wasserstein ISS bound along control homotopies, with a forcing term determined by the horizontal component of the $s$-derivative of the controlled dynamics. A novel square-root Dini comparison lemma converted the energy estimate into the distance estimate. We applied the ISS theorem to Brockett's integrator, certifying that a smooth feedback exponentially stabilizes every law in an axisymmetric class to the uniform law on a circle.

Several directions remain open for future work. It would be useful to identify broad classes of subsets $\mathcal C$ for which the horizontal tangent cones admit a simple characterization. Also, computational methods for constructing contraction metrics $\mathbf M(t,\mathbf{x})$ in the Wasserstein setting, for example through structured ansatzes, would facilitate controller synthesis. Moreover, while this paper focused on flows induced by pushforwards with respect to an underlying ODE flow on $\mathbb{R}^n$, it would be useful to generalize the theory to general flows on $\mathcal{C}\subseteq\mathcal{P}_2$ which do not arise from such pushforwards. Other interesting directions might come from replacing $\mathbb{R}^n$ by a Riemannian manifold or a Polish space. Such extensions would fully establish intrinsic Wasserstein contraction as a systematic tool for the stability analysis and control of measure-valued dynamical systems.

\bibliographystyle{IEEEtran}
\bibliography{references}
\end{document}